\documentclass[11pt]{amsart}
\usepackage{geometry,graphicx,fancybox}
\geometry{left=2.54cm,right=2.54cm,top=2.54cm,bottom=2.54cm}
\usepackage{latexsym}
\usepackage{labelfig}
\usepackage{amssymb}
\usepackage[cp850]{inputenc}
\usepackage{epsfig}
\usepackage{epstopdf}
\usepackage{psfrag}
\usepackage{amsthm}
\usepackage{amscd}
\usepackage{amsmath}
\usepackage{amsfonts}
\usepackage{graphics}
\usepackage{enumerate}

\newtheorem{theorem}{Theorem}[section]
\newtheorem{proposition}[theorem]{Proposition}
\newtheorem{lemma}[theorem]{Lemma}
\newtheorem{corollary}[theorem]{Corollary}

\theoremstyle{definition}
\newtheorem{definition}[theorem]{Definition}
\newtheorem{example}[theorem]{Example}

\newtheorem{remark}[theorem]{Remark}

\newcommand{\sys}{{\rm sys}}
\newcommand{\Sys}{{\mathfrak S}}
\newcommand{\R}{{\mathbb R}}

\newcommand{\N}{{\mathbb N}}

\newcommand{\TT}{{\mathbb T}}

\newcommand{\Z}{{\mathbb Z}}

\newcommand{\BB}{{\mathcal B}}

\newcommand{\G}{{\mathcal G}}
\newcommand{\T}{{\mathcal T}}

\newcommand{\area}{{\rm area}}
\newcommand{\tens}{{\rm tens}}
\newcommand{\arcsinh}{{\,\rm arcsinh}}

\newcommand{\length}{{\rm length}}
\newcommand{\kk}{\Bbbk}
\newcommand{\PP}{\mathcal P}

\newcommand{\cf}{{\it cf.}}
\newcommand{\ie}{{\it i.e.}}

\long\def\forget#1\forgotten{}

\numberwithin{equation}{section}

\begin{document}

\forget
Nothing appears in the document !
\forgotten

%




%

\title{Short loop decompositions of surfaces and the geometry of Jacobians}




\author[F.~Balacheff]{Florent Balacheff}

\address[Florent Balacheff] {Laboratoire Paul Painlev\'e, Universit\'e Lille 1\\ Lille, France}

\email{florent.balacheff@math.univ-lille1.fr}

\author[H.~Parlier]{Hugo Parlier${}^\dagger$}
\address[Hugo Parlier]{Department of Mathematics, University of Fribourg\\
 Switzerland}
\email{hugo.parlier@gmail.com}
\thanks{${}^\dagger$ Research supported by Swiss National Science Foundation grant number PP00P2\textunderscore 128557}

\author[S.~Sabourau]{St\'ephane Sabourau}
\address[St\'ephane Sabourau]{
Laboratoire de Math\'ematiques et Physique Th\'eorique,
Universit\'e Fran\c{c}ois-Rabelais Tours,
F\'ed\'eration Denis Poisson -- CNRS,
Parc de Grandmont, 37200 Tours, France}
\email{sabourau@lmpt.univ-tours.fr}

\date{\today}



\begin{abstract} 
Given a Riemannian surface, we consider a naturally embedded graph which captures part of the topology and geometry of the surface. By studying this graph, we obtain results in three different directions. 

First, we find bounds on the lengths of homologically independent curves on closed Riemannian surfaces.
As a consequence, we show that for any $\lambda \in (0,1)$ there exists a constant $C_\lambda$ such that every closed Riemannian surface of genus $g$ whose area is normalized at $4\pi(g-1)$ has at least $[\lambda g]$ homologically independent loops of length at most $C_\lambda \log(g)$.
This result extends Gromov's asymptotic $\log(g)$ bound on the homological systole of genus~$g$ surfaces.
We construct hyperbolic surfaces showing that our general result is sharp.
We also extend the upper bound obtained by P.~Buser and P.~Sarnak on the minimal norm of nonzero period lattice vectors of Riemann surfaces
in their geometric approach of the Schottky problem to almost $g$ homologically independent vectors.

Then, we consider the lengths of pants decompositions on complete Riemannian surfaces in connexion with Bers' constant and its generalizations.
In particular, we show that a complete noncompact Riemannian surface of genus $g$ with $n$ ends and area normalized to $4\pi (g+\frac{n}{2}-1)$ admits a pants decomposition whose total length (sum of the lengths) does not exceed $C_g \, n \log (n+1)$ for some constant $C_g$ depending only on the genus. 

Finally, we obtain a lower bound on the systolic area of finitely presentable nontrivial groups with no free factor isomorphic to~$\Z$ in terms of its first Betti number. The asymptotic behavior of this lower bound is optimal.
\end{abstract}




%

\subjclass[2010]{Primary: 30F10. Secondary: 32G20, 53C22.}

\keywords{Riemann surfaces, simple closed geodesics, short homology basis, systole, Jacobian, period lattice, Schottky problem, Bers' constant, pants decomposition, Teichm\"uller and moduli spaces, systolic area of groups}

\maketitle


\section{Introduction}

Consider a surface of genus $g$ with $n$ marked points and consider the different complete Riemannian metrics of finite area one can put on it. These include complete hyperbolic metrics of genus $g$ with $n$ cusps, but also complete Riemannian metrics with $n$ ends which we normalize to the area of their hyperbolic counterparts.
We are interested in describing what surfaces with large genus and/or large number of ends can look like. More specifically, we are interested in the lengths of certain curves that help describe the geometry of the surface.

\

In the first part of this article, we generalize the following results regarding the shortest length of a homologically nontrivial loop on a closed Riemannian surface (i.e., the homological systole) and on the Jacobian of a Riemann surface.
The homological systole of a closed Riemannian surface of genus $g$ with normalized area, that is, with area $4 \pi (g-1)$, is at most $\sim \log(g)$.
This result, due to M.~Gromov \cite[2.C]{gro96}, is optimal.
Indeed, there exist families of hyperbolic surfaces, one in each genus, whose homological systoles grow like $\sim \log(g)$.
The first of these were constructed by P.~Buser and P.~Sarnak in their seminal article~\cite{BS94}, and there have been other constructions since by R. Brooks \cite{BR99} and M.~Katz, M.~Schaps and U.~Vishne \cite{KSV07}.
By showing that the shortest homologically nontrivial loop on a hyperbolic surface lies in a ``thick" embedded cylinder, P.~Buser and P.~Sarnak also derived new bounds on the minimal norm of nonzero period lattice vectors of Riemann surfaces. 
This result paved the way for a geometric approach of the Schottky problem which consists in characterizing Jacobians (or period lattices of Riemann surfaces) among abelian varieties.

\

Bounds on the lengths of curves in a homology basis have also been studied by P.~Buser and M.~Sepp\"al\"a~\cite{BuS02,BS03} for closed hyperbolic surfaces.
Note however that without a lower bound on the homological systole, the $g+1$ shortest homologically independent loop cannot be bounded by any function of the genus. Indeed, consider a hyperbolic surface with $g$ very short homologically independent (and thus disjoint) loops. Every loop homologically independent from these short curves must cross one of them, and via the collar lemma, can be made arbitrarily large by pinching our initial $g$ curves. 

\

As a preliminary result, we obtain new bounds on the lengths of short homology basis for closed Riemannian surfaces with homological systole bounded from below.

\begin{theorem} \label{theo:0}
Let $M$ be a closed orientable Riemannian surface of genus~$g$ with homological systole at least~$\ell$ and area equal to $4\pi(g-1)$.
Then there exist $2g$ loops $\alpha_{1},\ldots,\alpha_{2g}$ on~$M$ which induce a basis of~$H_{1}(M;\Z)$ such that
\begin{equation}
\length(\alpha_{k}) \leq C_{0} \, \frac{\log(2g-k+2)}{2g-k+1} \, g,
\end{equation}
where $C_0=\frac{2^{16}}{\min\{1,\ell\}}$.

\end{theorem}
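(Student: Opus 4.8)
The plan is to encode the homology of $M$ in a geodesic graph $G$ embedded in $M$, whose number of edges and whose edge lengths are controlled by the area and by the lower bound $\ell$ on the homological systole, and then to read a short basis of $H_1(M;\Z)$ off the combinatorics and metric of $G$; one may assume $\ell\le 1$, replacing $\ell$ by $\min\{1,\ell\}$. The first step is a \emph{greedy basis}: let $\alpha_1$ be a shortest homologically nontrivial loop and, inductively, let $\alpha_k$ be a shortest loop whose class, together with $[\alpha_1],\dots,[\alpha_{k-1}]$, generates a rank-$k$ direct summand of $H_1(M;\Z)$. These loops exist, are closed geodesics, satisfy $\length(\alpha_1)\le\cdots\le\length(\alpha_{2g})$, and their classes form a basis of $H_1(M;\Z)$; so it suffices to show that if every loop of length $<L$ has class in the rank-$(k-1)$ summand already generated, then $L\le C_0\,\frac{\log(2g-k+2)}{2g-k+1}\,g$.

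Next I would build the graph. Fix a scale $\rho\asymp\ell$, take a maximal $\rho$-separated set $\{x_1,\dots,x_N\}\subset M$, and join $x_i$ to $x_j$ by a minimizing geodesic whenever $d(x_i,x_j)<3\rho$; after a generic perturbation this yields an embedded graph $G\subset M$ with all edges of length $<3\rho<\ell$. Since error loops of length $<6\rho<\ell$ are homologically trivial, every loop of $M$ is homologous to an edge-loop of $G$, so the inclusion induces a surjection $H_1(G;\Z)\twoheadrightarrow H_1(M;\Z)$ and $b_1(G)\ge 2g$. The homological systole bound enters through a \emph{local area estimate}: a ball of radius $\le\ell$ cannot be metrically collapsed without producing a short homologically essential loop, so after first pruning from $G$ the thin ``fingers'' of $M$ (contractible collars, which carry no homology) one obtains $\area(B(x_i,\rho/2))\gtrsim\rho^2$, whence, summing over the disjoint half-balls, $N\lesssim\area(M)/\rho^2\lesssim g/\ell^2$; a packing estimate in $B(x_i,4\rho)$ bounds the degree of $G$ by a universal constant, so $G$ has $\lesssim g/\ell^2$ edges and total length $\lesssim g/\ell$.

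It then remains to extract from a connected, bounded-degree graph $G$ with $N\lesssim g/\ell^2$ vertices, edges of length $<3\rho\lesssim\ell$, and $b_1(G)\ge 2g$ a family of cycles $c_1,\dots,c_{2g}$ whose classes form a basis of a rank-$2g$ direct summand of $H_1(G;\Z)$, with $\length(c_k)\lesssim\rho\cdot\frac{N}{2g-k+1}\log(2g-k+2)$; pushing the $c_k$ forward to $M$ and substituting $\rho\asymp\ell$ and $N\lesssim g/\ell^2$ then gives the stated bound, and a careful choice of the implied constants yields $C_0=2^{16}/\ell$. Again I would argue greedily: having chosen $c_1,\dots,c_{k-1}$, one locates a connected subgraph on at most $N$ vertices supporting at least $2g-k+1$ of the remaining classes of $H_1(G)/\langle[c_1],\dots,[c_{k-1}]\rangle$, and then, using that a bounded-degree graph of girth $t$ has at least $c^{\,t}$ vertices (a Moore-type bound), finds inside it a cycle nontrivial modulo $[c_1],\dots,[c_{k-1}]$ of combinatorial length $\lesssim\frac{N}{2g-k+1}\log(2g-k+2)$. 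The direct-summand (primitivity) requirement is secured by the usual device of choosing, at each step, the shortest cycle lying outside the subgroup already generated, not merely outside its rational span.

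I expect the genuine difficulty to lie in the geometric reduction of the second paragraph: the local area estimate is transparent only on the thick part of $M$, and one must prove carefully that the thin part --- a union of collars of short homologically trivial curves --- can be absorbed into $G$ without lengthening edges or spoiling surjectivity on $H_1$; this is exactly where the hypothesis $\sys_{H_1}(M)\ge\ell$ is used in an essential way. A secondary point is producing the precise shape $\frac{\log(2g-k+2)}{2g-k+1}$ rather than a weaker $\frac{1}{2g-k+1}$ or a uniform $\log(2g)$ bound --- this may require refining the single covering into a dyadic family of scales, the logarithm then appearing as the number of relevant scales --- together with the bookkeeping needed to bring the constant down to $2^{16}$.
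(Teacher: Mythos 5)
Your overall architecture (a geodesic net, surjectivity of $H_1$ of the nerve graph onto $H_1(M)$ via short error loops, a vertex count from a local area bound, then a greedy extraction of short independent cycles) is the same as the paper's, but the two places you yourself flag as delicate are precisely where the proposal fails, and the fixes are missing. First, the local area estimate: your assertion that ``a ball of radius $\leq \ell$ cannot be metrically collapsed without producing a short homologically essential loop'' is false. A long thin neck around a short \emph{separating} geodesic (homologically trivial but not contractible), or a long contractible finger, collapses balls of radius $\rho$ to arbitrarily small area while $\sys_H \geq \ell$ holds, so the count $N \lesssim \area(M)/\rho^2$ and the bound on the total length of $G$ break down; ``pruning contractible collars'' does not address the separating necks, and ``absorbing the thin part into $G$'' is exactly the statement to be proved, which you defer. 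The paper resolves this in two steps you do not have: (i) a surgery reduction -- cut along each homotopical systolic loop that is null-homologous and cap with round hemispheres, using the fact (Lemma~\ref{lem:cut}) that no curve of a minimal homology basis crosses such a loop, so the minimal basis and its lengths are unchanged while the area grows by at most $g\ell^2/\pi$ -- which reduces to the case $\sys_\pi \geq \ell$ on a possibly disconnected surface; and (ii) Gromov's conformal regularization (Lemma~\ref{lem:regmet}), which is still needed after (i) because even with $\sys_\pi \geq \ell$ long contractible fingers defeat a direct ball-area lower bound; only after the conformal change does every $r_0$-disk have area $\geq r_0^2/2$.

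Second, the combinatorial extraction is not correctly justified. The ``Moore-type bound'' you invoke -- that a bounded-degree graph of girth $t$ has at least $c^{\,t}$ vertices -- is false (a single long cycle has bounded degree and girth equal to its number of vertices); Moore bounds require minimum degree at least $3$, and after suppressing degree-$1$ and degree-$2$ vertices the metric lengths of edges are no longer $O(\rho)$, so this route does not give the stated estimate. What is needed, and what the paper uses, is the Bollob\'as--Szemer\'edi--Thomason inequality $\sys(\Gamma) \leq 4\,\frac{\log(1+b)}{b}\,\length(\Gamma)$ for metric graphs of first Betti number $b$, applied to a nested family $\Gamma_1 \supset \Gamma_2 \supset \cdots$ in which one deletes an edge of each successive systolic cycle, so that the $k$-th cycle is estimated in a graph of Betti number $2g-k+1$ and controlled length; this is where the factor $\frac{\log(2g-k+2)}{2g-k+1}$ comes from, not from a dyadic family of scales as you speculate. (Your insistence on direct summands to get a genuine $\Z$-basis is a good point, and your net-with-geodesic-edges construction matches the paper's nerve graph; but as written the proof has the two genuine gaps above.)
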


On the other hand, without assuming any lower bound on the homological systole, M.~Gromov~\cite[1.2.D']{gro83} proved that on every closed Riemannian surface of genus~$g$ with area normalized to $4 \pi (g-1)$, the length of the $g$ shortest homologically independent loops is at most $\sim \sqrt{g}$. Furthermore, Buser's so-called hairy torus example~\cite{bus81,bus92} with hair tips pairwise glued together shows that this bound is optimal, even for hyperbolic surfaces.

\

A natural question is then to find out for how many homologically independent curves does Gromov's $\log(g)$ bound hold. The only result in this direction we are aware of is due to B. Muetzel~\cite{mue}, who recently proved that on every genus~$g$ hyperbolic surface there exist at least two homologically independent loops of lengths at most $\sim \log(g)$. We show that on every closed Riemannian surface of genus~$g$ with normalized area there exist almost $g$ homologically independent loops of lengths at most $\sim \log(g)$. More precisely, we prove the following.

\begin{theorem} \label{theo:A}
Let $\eta:\N \to \N$ be a function such that 
$$
\displaystyle \lambda := \sup_g \frac{\eta(g)}{g} < 1.
$$
Then there exists a constant~$C_\lambda$ such that for every closed Riemannian surface~$M$ of genus~$g$ there are at least $\eta(g)$ homologically independent loops $\alpha_{1},\ldots,\alpha_{\eta(g)}$ which satisfy
\begin{equation*}
\length(\alpha_{i}) \leq C_\lambda \, \frac{\log(g+1)}{\sqrt{g}} \, \sqrt{\area(M)}
\end{equation*}
for every $i \in \{1,\ldots,\eta(g) \}$.
\end{theorem}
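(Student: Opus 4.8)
The strategy is to reduce to the case where the homological systole is bounded below, so that Theorem \ref{theo:0} can be applied, and to handle the short curves by a pinching/decomposition argument. First I would rescale so that $\area(M) = 4\pi(g-1)$; it then suffices to produce $\eta(g)$ homologically independent loops of length at most $C_\lambda \log(g+1)/\sqrt{g}\cdot\sqrt{4\pi(g-1)}$, i.e. of length $O_\lambda(\log(g+1))$. The plan is to look at the curves of length below a suitable threshold $\ell_0$ (a small universal constant). Collecting a maximal family of disjoint simple loops that are individually short and whose homology classes are independent, we get some number $h$ of short homologically independent loops $\gamma_1,\dots,\gamma_h$. If $h \geq \eta(g)$ we are already done, since these are shorter than the required bound. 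So assume $h < \eta(g) \leq \lambda g$, and in particular $g - h \geq (1-\lambda)g$ is a definite fraction of $g$.

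Next I would cut $M$ along the short loops $\gamma_1,\dots,\gamma_h$ (replacing each with two boundary components, which are then capped off, or alternatively passing to the surface obtained by an appropriate surgery). This produces one or more Riemannian surfaces whose genera add up to at least $g - h$, whose total area is at most $4\pi(g-1)$, and—by maximality of the family—whose homological systoles are bounded below by $\ell_0$. Here one must be careful: cutting along a separating-in-homology but essential curve can drop the genus; the point of choosing the $\gamma_i$ to be \emph{homologically} independent is precisely that cutting along all of them drops the first Betti number by exactly $2h$, so the pieces carry homology rank $2(g-h)$ in total. On the component(s) of large genus I apply Theorem \ref{theo:0}: each component $M_j$ of genus $g_j$ with systole $\geq \ell_0$ and area $A_j$ admits a homology basis with the $k$-th shortest element of length $O(\frac{\log(2g_j-k+2)}{2g_j-k+1} g_j)$ after renormalizing area to $4\pi(g_j-1)$, i.e. of length $O(\frac{\log(2g_j-k+2)}{2g_j-k+1}\sqrt{g_j A_j})$ in the original metric. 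Taking the $\lceil 2\eta(g)\rceil$ shortest of these across all components and pushing them back into $M$ (they remain homologically independent there since the cutting only killed the classes of the $\gamma_i$), the length bound becomes, for the $i$-th such curve with $i \leq 2\eta(g) \leq 2\lambda g$,
\begin{equation*}
O\!\left( \frac{\log(2g+2)}{2g - 2\lambda g} \cdot g \right) = O_\lambda(\log(g+1)),
\end{equation*}
using $2g_j - i \geq 2(g-h) - i \geq 2(1-\lambda)g - 2\lambda g$, which is a positive fraction of $g$ exactly because $\lambda < 1$. Combining these with the $h$ short loops $\gamma_1,\dots,\gamma_h$ yields more than $\eta(g)$ homologically independent loops, each of length $O_\lambda(\log(g+1))\cdot\sqrt{\area(M)}/\sqrt{g}$, which is the claim.

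The main obstacle I expect is the bookkeeping of homology under cutting, and making sure the bound from Theorem \ref{theo:0} is applied with the right value of $k$: we need the index $k$ to stay bounded away from $2g_j$ so that the denominator $2g_j - k + 1$ is comparable to $g$, and this is exactly where the hypothesis $\lambda < 1$ enters and why the constant $C_\lambda$ must blow up as $\lambda \to 1$. A secondary technical point is that $M$ need not be orientable a priori in the statement (Theorem \ref{theo:0} is stated for orientable surfaces); one passes to the orientation double cover or argues directly that the relevant embedded graph construction underlying Theorem \ref{theo:0} still works, at the cost of universal constants. Finally, one should check that the threshold $\ell_0$ can be chosen once and for all independently of $g$—this follows because the collar lemma (or its Riemannian analogue via the area bound) guarantees that sufficiently short homologically nontrivial loops are simple and pairwise disjoint, so the maximal family is well-defined and cutting along it is unambiguous.
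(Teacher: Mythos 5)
Your overall route is genuinely close to the paper's: the paper's proof of Theorem~\ref{theo:lngr} also starts from a maximal collection of disjoint simple short homologically independent loops, cuts along them, closes up the cut surface into a closed surface with homological systole bounded below, applies Theorem~\ref{theo:ell} (= Theorem~\ref{theo:0}), and transfers the curves back, checking independence exactly as you sketch. The difference is the closing-up device: you cap the boundary with hemispheres (genus drops to $g-h$), while the paper first normalizes each boundary neighborhood by a flat-cylinder replacement and then glues ``fat tori'' (Lemma~\ref{lem:fat}), so the genus goes up and the systole lower bound for the new surface holds essentially by construction, with the length transfer handled by a marked-length-spectrum comparison. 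Your capping variant looks viable, but the step you state in one clause --- that the capped pieces have homological systole at least $\ell_0$ ``by maximality of the family'' --- is precisely the substance of the proof and is not justified. To make it work you must (a) show that a homologically nontrivial loop of the capped surface of length $<\ell_0$ can be replaced, without increasing length, by a \emph{simple} loop avoiding the caps and disjoint from the $\gamma_i$ (push off round hemispheres, resolve self-intersections by splitting), and (b) show that nontriviality in $H_1$ of the capped surface forces the class in $H_1(M)$ to lie outside $\mathrm{span}\{[\gamma_i]\}$ (e.g.\ by pairing against curves disjoint from the $\gamma_i$ and using nondegeneracy of the intersection form), so that maximality is actually contradicted; the same bookkeeping is needed when you push the basis curves of Theorem~\ref{theo:0} back into $M$. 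Also, your appeal to the collar lemma to get that short homologically nontrivial loops are simple and pairwise disjoint is false for general Riemannian metrics; fortunately it is not needed, since a maximal disjoint family exists trivially and maximality is all the argument uses.

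There is also a concrete quantitative error at the one point where the hypothesis $\lambda<1$ must do its work. You extract ``the $\lceil 2\eta(g)\rceil$ shortest'' curves from the cut pieces and bound the denominator by $2(g-h)-i \geq 2(1-\lambda)g - 2\lambda g = 2(1-2\lambda)g$, which is positive only for $\lambda<\tfrac12$; worse, $H_1$ of the capped surface has rank $2(g-h)$, which for $\lambda$ close to $1$ can be much smaller than $2\eta(g)$, so that many independent classes need not even exist there. The fix is easy and is what the count should be: you only need $\eta(g)-h$ curves from the capped surface (to combine with the $h$ short loops), and then the denominator in Theorem~\ref{theo:0} is
$2(g-h)-(\eta(g)-h)+1 \geq 2g-2\eta(g)+1 \geq 2(1-\lambda)g$,
which is the bound that makes $C_\lambda$ blow up only as $\lambda\to 1$, as in the paper (which for the same reason takes $\eta(g)+3n$ curves on a genus $g+n$ surface). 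With that correction and the systole verification spelled out, your capping argument would give an alternative, slightly more economical proof than the fat-torus construction, at the cost of handling loops through the hemispherical caps explicitly.
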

Typically, this result applies to $\eta(g)=[\lambda g]$ where $\lambda \in (0,1)$.

\

Thus, the previous theorem generalizes Gromov's $\log(g)$ bound on the homological systole, \cf~\cite[2.C]{gro96}, to the lengths of almost $g$ homologically independent loops.
Note that its proof differs from other systolic inequality proofs.
Specifically, it directly yields a $\log(g)$ bound on the homological systole without considering the homotopical systole (that is, the shortest length of a homotopically nontrivial loop).
Initially, M.~Gromov obtained his bound from a similar bound on the homotopical systole using surgery, \cf~\cite[2.C]{gro96}.
However the original proof of the $\log(g)$ bound on the homotopical systole, \cf~\cite[6.4.D']{gro83} and \cite{gro96}, as well as the alternative proofs available, \cf~\cite{bal04,KS05}, do not directly apply to the homological systole.

\medskip

One can ask how far from being optimal our result on the number of short (homologically independent) loops is.
Of course, in light of the Buser-Sarnak examples, one can not hope to do (roughly) better than a logarithmic bound on their lengths, but the question on the number of such curves remains.
Now, because of Buser's hairy torus example, we know that the $g$ shortest homologically independent loops of a hyperbolic surface of genus~$g$ can grow like $\sim \sqrt{g}$ and that the result of Theorem~\ref{theo:A} cannot be extended to $\eta(g)=g$.
Still, one can ask for $g-1$ homologically independent loops of lengths at most $\sim \log(g)$, or for a number of homologically independent loops of lengths at most $\sim \log(g)$ which grows asymptotically like~$g$.
Note that the surface constructed from Buser's hairy torus does not provide a counterexample in any of these cases.

\medskip

Our next theorem shows this is impossible, which proves that the result of Theorem~\ref{theo:A} on the number of homologically independent loops whose lengths satisfy a $\log(g)$ bound is optimal. Before stating this theorem, it is convenient to introduce the following definition.

\begin{definition} \label{def:ksys}
Given $k \in \N^*$, the \emph{$k$-th homological systole} of a closed Riemannian manifold~$M$, denoted by~$\sys_{k}(M)$, is defined as the smallest real~$L \geq 0$ such that there exist $k$ homologically independent loops on~$M$ of length at most~$L$.
\end{definition}

With this definition, under the assumption of Theorem~\ref{theo:A}, every closed Riemannian surface of genus~$g$ with area $4 \pi (g-1)$ satisfies
$$
\sys_{\eta(g)}(M) \leq C_\lambda \, \log(g+1)
$$
for some constant~$C_\lambda$ depending only on~$\lambda$.
Furthermore, still under the assumption of Theorem~\ref{theo:A}, Gromov's sharp estimate, \cf~\cite[1.2.D']{gro83}, with this notation becomes
$$
\sys_g(M) \leq C \, \sqrt{g}
$$
where $C$ is a universal constant.

\

We can now state our second main result.

\begin{theorem} \label{theo:B}
Let $\eta:\N \to \N$ be a function such that 
$$
\displaystyle \lim_{g \to \infty} \frac{\eta(g)}{g} = 1.
$$
Then there exists a sequence of genus~$g_{k}$ hyperbolic surfaces~$M_{g_{k}}$ with $g_{k}$ tending to infinity such that
$$
\lim_{k \to \infty} \frac{\sys_{\eta(g_{k})}(M_{g_{k}})}{\log(g_{k})} = \infty.
$$
\end{theorem}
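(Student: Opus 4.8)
The plan is to build a sequence of hyperbolic surfaces that are ``mostly'' a union of many small, high-genus pieces attached along short geodesics, arranged so that any collection of $\eta(g_k)$ homologically independent loops is forced to use loops winding through thin collars and hence of length $\gg \log(g_k)$. Concretely, I would fix a ``building block'': a hyperbolic surface $S_0$ of some small genus $h$ (say $h=2$) with one boundary geodesic of length $\eps_k$, chosen so that $S_0$ itself has large homological systole --- this is exactly the kind of block produced by the Buser--Sarnak type constructions, where one can arrange all nonseparating loops of $S_0$ to have length $\gtrsim \log(h)$ while making $\eps_k$ as small as we like (pinching a separating curve only lengthens the homologically essential loops). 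Take $N_k$ copies of $S_0$ and attach them cyclically (or along a tree, or to a fixed ``hub'' surface) using the boundary geodesics, inserting collars of modulus tending to infinity as $\eps_k\to 0$. The resulting closed hyperbolic surface $M_{g_k}$ has genus $g_k = hN_k + O(N_k)$, and the gluing pattern contributes only $O(N_k)$ to the genus, so the ``block homology'' of dimension $2hN_k$ dominates: $2g_k - 2hN_k = O(N_k)$, i.e. the fraction of the homology carried inside the blocks tends to $1$.

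The key estimate is the following: any loop that is homologically nontrivial and not supported (up to homology) inside a single block $S_0$ must cross one of the gluing collars, and by the collar lemma its length is at least $\sim 2\operatorname{arcsinh}(1/\sinh(\eps_k/2)) \to \infty$. Therefore, given $\eta(g_k)$ homologically independent loops of length $\leq L$, all but $O(N_k)$ of them (the number that can afford to cross collars is bounded in terms of how many independent ``crossing classes'' the gluing graph supplies, which is $O(N_k)$) must be homologous to cycles living in the blocks; but inside each block the homological systole is $\gtrsim \log h$, so if instead we choose $h = h_k \to \infty$ slowly, each such loop has length $\gtrsim \log(h_k)$. Thus $\sys_{\eta(g_k)}(M_{g_k}) \gtrsim \min\{\log(h_k),\ 2\operatorname{arcsinh}(1/\sinh(\eps_k/2))\}$. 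Now I would choose $h_k$ and $\eps_k$ as functions of $N_k$ (hence of $g_k$) so that both $\log(h_k)/\log(g_k)\to\infty$ and $\operatorname{arcsinh}(1/\sinh(\eps_k/2))/\log(g_k)\to\infty$; since $g_k \approx h_k N_k$, taking for instance $h_k = N_k$ gives $\log(h_k) \sim \tfrac12\log(g_k)$, which is not enough, so one must instead let $h_k$ grow much faster than $N_k$, e.g. $N_k$ fixed-ish and $h_k\to\infty$ --- but then the ``fraction $\to 1$'' fails. This tension is the crux: one needs the number of blocks large (to make the block-homology fraction tend to $1$) \emph{and} the per-block systole super-logarithmic in $g_k$. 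The resolution is a two-scale construction: use $N_k$ blocks where $N_k\to\infty$ but $N_k = o(\log g_k)$-ish is not required; rather, observe that we do not need per-block systole $\gg\log g_k$ for \emph{all} discarded loops, only that the number of loops that are cheap (length $\leq C\log g_k$) is $o(\eta(g_k))$ --- i.e. $o(g_k)$. Cheap loops are either collar-crossing (at most $O(N_k)$ of them independent) or block-internal of length $\leq C\log g_k$; choosing $h_k$ with $\log h_k \gg \log g_k$ is impossible, so instead choose the blocks to have \emph{small} genus but make $\eps_k$ super-exponentially small in $g_k$ so that collar crossings cost $\gg \log g_k$, and replace ``block-internal systole $\gtrsim\log h_k$'' by ``block-internal homologically independent loops, beyond the first $2h$, must re-cross a collar.''

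So the \textbf{cleanest version} is: blocks of fixed genus $h=1$ with one boundary component, i.e. one-holed tori, glued cyclically into a chain/necklace of $N_k$ tori with all gluing geodesics of length $\eps_k$; then $g_k = N_k$, a genus basis is given by the $N_k$ ``core'' curves of the tori (lengths $O(1)$) plus $N_k$ ``transversal'' curves each crossing at least two collars (lengths $\gtrsim 2\operatorname{arcsinh}(1/\sinh(\eps_k/2))$); one checks that \emph{any} system of homologically independent loops has at most $N_k + O(1)$ of length $< 2\operatorname{arcsinh}(1/\sinh(\eps_k/2))$, because the quotient of $H_1$ by the span of the core classes is detected by the transversal intersection numbers, and producing $k$ loops independent modulo the cores forces $k$ collar crossings. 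Hence $\sys_{\eta(g_k)}(M_{g_k}) \geq 2\operatorname{arcsinh}(1/\sinh(\eps_k/2))$ as soon as $\eta(g_k) > g_k + C = N_k + C$, which holds for $k$ large since $\eta(g_k)/g_k\to 1$ only guarantees $\eta(g_k) \geq g_k - o(g_k)$, \emph{not} $\eta(g_k) > g_k$ --- so one more idea is needed: take the blocks to be one-holed tori but only use $N_k = g_k$ of them with $\eps_k\to0$, and note $\eta(g_k)/g_k\to 1$ forces $\eta(g_k) \geq g_k - o(g_k)$, while the number of ``cheap'' homologically independent loops is at most $g_k - (\text{number forced to cross a collar})$; refine the counting so that in fact at most $\tfrac{1}{2}g_k + o(g_k)$ loops can avoid long collar-crossings (each core curve is itself separating-in-homology only in pairs), giving $\sys_{\eta(g_k)} \geq 2\operatorname{arcsinh}(1/\sinh(\eps_k/2)) \to \infty$ once $\eta(g_k) > \tfrac12 g_k$, which does follow from $\eta(g_k)/g_k \to 1$.

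**Main obstacle.** The heart of the argument, and the step I expect to require the most care, is the homological counting: proving that on the necklace surface \emph{any} family of $\eta(g_k)$ homologically independent loops must contain more than a controlled fraction of loops that genuinely cross a short collar (rather than being replaceable, in homology, by short loops inside the blocks). This amounts to analyzing the image of such a loop system in $H_1(M_{g_k};\Z)$ modulo the sub-lattice spanned by the short intra-block cycles, and showing this quotient is ``seen'' only through collar-crossing, so its rank lower-bounds the number of long loops; combining with the collar lemma then finishes. I would carry it out by choosing a concrete symplectic basis adapted to the necklace, writing each loop's class in coordinates, and using the fact that a nonzero crossing-coordinate forces a collar traversal of definite length $\geq 2\operatorname{arcsinh}(1/\sinh(\eps_k/2))$. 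Everything else (constructing the blocks, computing $g_k$, choosing $\eps_k = \eps_k(g_k)$ so that $2\operatorname{arcsinh}(1/\sinh(\eps_k/2))/\log g_k\to\infty$) is routine hyperbolic geometry.
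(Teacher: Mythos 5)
Your proposal has a genuine gap at exactly the step you flag as the crux, and the counting claim it rests on is false. In the ``cleanest version'' (a necklace of genus-one blocks glued along geodesics of length $\eps_k\to 0$), it is not true that any system of homologically independent loops contains at most $N_k+O(1)$ short ones. Each genus-one block with a short boundary component carries \emph{two} homologically independent closed geodesics of universally bounded length, both lying entirely inside the block and crossing no gluing collar: by a Bers-type bound for signature $(1,1)$ (area $2\pi$, boundary length $\leq \eps_k$) there is a non-separating geodesic $\alpha_i$ of bounded length, and cutting along it gives a pair of pants with boundary lengths $(\eps_k,\ell(\alpha_i),\ell(\alpha_i))$ in which the dual curve $\beta_i$ also has bounded length (the relevant hexagon distance stays bounded as $\eps_k\to 0$). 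So your surfaces satisfy $\sys_{2g-2}(M_{g_k})=O(1)$, hence $\sys_{\eta(g_k)}(M_{g_k})=O(1)$ for any $\eta$ with $\eta(g)/g\to 1$ --- the opposite of what is needed. Pinching the gluing curves protects only the classes \emph{outside} the span of the intra-block homology, and that span already has rank about $2g>\eta(g)$. (There is also a minor topological slip: one-holed tori, having a single boundary circle each, cannot be chained cyclically.) Your earlier variant with high-genus Buser--Sarnak blocks does not fail for this reason, but, as you yourself observe, it can never do better than a constant multiple of $\log g$: the homological systole of a genus-$h$ hyperbolic block is at most $\sim 2\log h\leq 2\log g$, so no choice of block size beats the $\log(g_k)$ denominator, let alone makes the ratio tend to infinity.

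The missing idea, which is how the paper resolves precisely this tension, is a \emph{rescaling} mechanism rather than a pinching-only one. The paper starts from a Hurwitz surface $N$ of genus $h$ with homological systole $\geq \tfrac43\log h$ (Katz--Schaps--Vishne), subdivides its $(2,3,7)$-triangulation into triangles of size $1/m$, and replaces each small triangle (resp.\ each heptagon around an old vertex) by a hyperbolic polygon of definite side length with tiny boundary holes whose collars have width $\geq e^g$; gluing the holes pairwise yields a closed hyperbolic surface of genus $g=h+k$ with $k=21(h-1)(m^2-2)$ extra handles. The $k$ handle curves are the only new short independent classes, and any loop independent from them either crosses an $e^g$-wide collar or projects to a noncontractible loop of $N$ after contracting lengths by a factor $\sim Km$, so $\sys_{k+1}(M)\geq \tfrac43 Km\log h$. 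Since $\log g\approx\log h$ for fixed $m$, choosing $m\gtrsim C$ and then $h$ large (with $\eta(g)\geq(1-\eps)g\geq k+1$, which forces $\eps m^2$ small but is compatible with any fixed $C$) gives ratio $\geq C$, and a diagonal argument over $C\to\infty$ finishes. In short: you cannot get super-logarithmic lengths from block-internal systoles or from collar crossings alone; you must enlarge the metric of a systolically near-extremal surface by a factor $m\to\infty$ while ensuring the genus added in doing so contributes only homology classes that the threshold $\eta(g)$ allows you to discard.
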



In their geometric approach of the Schottky problem, P.~Buser and P.~Sarnak~\cite{BS94} also proved that the homological systole of the Jacobian of a Riemann surface~$M$ of genus~$g$ is at most $\sim \sqrt{\log(g)}$ and this bound is optimal.
In other words, there is a nonzero lattice vector in~$H^1(M;\Z)$ whose $L^2$-norm satisfies a $\sqrt{\log(g)}$ upper bound (see Section~\ref{sec:jacobian} for a precise definition). We extend their result by showing that there exist almost $g$ linearly independent lattice vectors whose norms satisfy a similar upper bound.
More precisely, we have the following.

\begin{corollary} \label{coro:C}
Let $\eta:\N \to \N$ be a function such that 
$$
\displaystyle \lambda := \sup_g \frac{\eta(g)}{g} < 1.
$$
Then there exists a constant~$C_\lambda$ such that for every closed Riemann surface~$M$ of genus~$g$ there are at least $\eta(g)$ linearly independent lattice vectors $\Omega_{1},\ldots,\Omega_{\eta(g)} \in H^1(M;\Z)$ which satisfy
\begin{equation}
|\Omega_i|_{L^2}^2 \leq C_\lambda \, \log(g+1)
\end{equation}
for~every $i \in \{1,\ldots,\eta(g)\}$.
\end{corollary}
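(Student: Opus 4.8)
The plan is to deduce the statement about lattice vectors in $H^1(M;\Z)$ from Theorem~\ref{theo:A} applied to the (constant curvature) conformal representative of the given Riemann surface, using the classical comparison between the $L^2$-norm on harmonic $1$-forms and the length of the corresponding homology class. First I would fix a hyperbolic metric in the conformal class of $M$, so that $\area(M)=4\pi(g-1)$; this is legitimate since the period lattice, and hence every quantity appearing in the conclusion, depends only on the conformal structure and not on the chosen metric. Applying Theorem~\ref{theo:A} with a function $\eta$ satisfying $\sup_g \eta(g)/g = \lambda < 1$ yields homologically independent loops $\alpha_1,\dots,\alpha_{\eta(g)}$ with $\length(\alpha_i)\le C_\lambda' \,\frac{\log(g+1)}{\sqrt g}\,\sqrt{4\pi(g-1)} \le C_\lambda''\,\log(g+1)$, after absorbing the $\sqrt{4\pi(g-1)}/\sqrt g$ factor, which is bounded, into a new constant.

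Next I would pass from loops to lattice vectors. Let $\Omega_i\in H^1(M;\Z)$ be the cohomology class Poincar\'e-dual to (the homology class of) $\alpha_i$; since the $\alpha_i$ are homologically independent, the $\Omega_i$ are linearly independent in $H^1(M;\Z)$. The key inequality is the standard bound relating the $L^2$-norm of a harmonic form to the length of a dual curve: for the harmonic representative $\omega_i$ of $\Omega_i$ one has $|\Omega_i|_{L^2}^2 = \int_M \omega_i\wedge *\omega_i$, and by the duality between cycles and forms together with the Cauchy--Schwarz inequality one gets $|\Omega_i|_{L^2}^2 \le \length(\alpha_i)\cdot \sup_{\alpha_i}|\omega_i|$, or more robustly the inequality $|\Omega_i|_{L^2} \le \length(\alpha_i)$ valid after suitable normalization (this is exactly the estimate Buser and Sarnak use in~\cite{BS94} to control the homological systole of the Jacobian by the length of the shortest homologically nontrivial curve). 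Combining this with the length bound above gives $|\Omega_i|_{L^2}^2 \le \length(\alpha_i)^2 \le C_\lambda\,\log(g+1)^2$; if the intended bound is genuinely $\log(g+1)$ rather than its square, one invokes instead the sharper Buser--Sarnak estimate in which the shortest homologically nontrivial curve sits in an embedded cylinder of definite width, so that its dual form has $L^2$-norm controlled by $\length(\alpha_i)/(\text{width})$, yielding $|\Omega_i|_{L^2}^2 \lesssim \length(\alpha_i)$ and hence the claimed $\log(g+1)$ bound.

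The main obstacle is precisely this last point: upgrading the crude estimate $|\Omega_i|_{L^2}\le \length(\alpha_i)$ (which only gives $\log^2$) to the form-squared-against-length estimate that produces the correct $\log(g+1)$ bound. For the single shortest curve, Buser and Sarnak achieve this by observing that a short homologically nontrivial geodesic on a hyperbolic surface is enclosed in a collar of width bounded below by a universal constant, so the ``capacity'' of the dual cohomology class is $\lesssim \length/\text{width}$. Here I would need to check that the loops $\alpha_i$ produced by Theorem~\ref{theo:A}, or at least a set of $\eta(g)$ homologically independent curves of comparable length, can be taken to be \emph{simple} and pairwise disjoint, or at least to have embedded collars of uniformly bounded width — this should follow from the graph-theoretic construction underlying Theorem~\ref{theo:A} (the loops come from a naturally embedded graph), possibly after replacing each $\alpha_i$ by the systole of a free homotopy class and controlling intersections, at the cost of enlarging $C_\lambda$. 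Once disjoint collars of definite width around the $\alpha_i$ are available, the capacity estimate applies to each $\Omega_i$ independently and the corollary follows. I would organize the write-up so that the conformal-invariance reduction and the citation of the Buser--Sarnak capacity estimate carry the argument, relegating the verification that the curves of Theorem~\ref{theo:A} admit good collars to a short lemma in Section~\ref{sec:jacobian}.
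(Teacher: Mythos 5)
Your overall route is the paper's route: fix the hyperbolic metric in the conformal class, take the $\eta(g)$ short homologically independent loops, represent the Poincar\'e-dual classes $\Omega_i\in H^1(M;\Z)$ by forms $dF_i$ supported in collars around the loops, and bound $|\Omega_i|_{L^2}^2$ by the capacity of the collar, i.e.\ by $\length(\alpha_i)/(\pi-2\theta_0)$ as in Buser--Sarnak \cite[(3.4)]{BS94}. You also correctly see that the naive bound (which would only give $\log(g+1)^2$) is useless and that the whole corollary hinges on producing collars of \emph{uniform} width around all $\eta(g)$ loops, not just around a systolic loop.

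The gap is precisely that step, which you defer to ``a short lemma'' and assert ``should follow from the graph-theoretic construction underlying Theorem~\ref{theo:A}.'' It does not: the curves produced by the graph argument are images of graph cycles under a distance-nonincreasing map and carry no a priori collar or simplicity information. In the paper this is the content of the hyperbolic version of the theorem (Theorem~\ref{theo:lng}), whose Step~2 proves the width bound $w_0=\tfrac12\arcsinh(1)$ by a separate hyperbolic argument: the loops of length less than $2\arcsinh(1)$ have collars by the collar lemma, and the remaining loops are taken to be simple closed geodesics in a minimal homology basis of the cut-open surface $N$, for which Lemma~\ref{lem:simple} (any arc of the geodesic between two points is minimizing, and two basis curves meet at most once) combined with the pentagon formula shows that a collar of width less than $w_0$ would force a decomposition $\length(\delta_1)+\length(\delta_2)\leq\length(\gamma)$ contradicting minimality; the fat-torus gluing of Lemma~\ref{lem:fat} is what guarantees the basis curves avoid the surgered region so that this argument runs on the original surface. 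So your plan is sound and identical in spirit to the paper's, but as written it omits, and slightly misattributes, the one genuinely new geometric ingredient beyond Theorem~\ref{theo:A}; to complete it you must prove the uniform collar width for all $\eta(g)$ curves along the lines above (pairwise disjointness of the collars, which you worry about, is not needed, since the capacity estimate is applied to each $U_i$ separately and linear independence comes from Poincar\'e duality alone).
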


Contrary to Theorem~\ref{theo:A}, we do not know whether the result of Corollary~\ref{coro:C} is sharp regarding the number of independent lattice vectors of norm at most $\sim \sqrt{\log(g)}$.

\medskip

To prove Theorem~\ref{theo:0} we consider naturally embedded graphs which capture a part of the topology and geometry of the surface, and study these graphs carefully.
Then, we derive Theorem~\ref{theo:A} in the absence of lower bound on the homological systole.
We actually present the proof of Theorem~\ref{theo:A}, first in the hyperbolic case (restricting ourselves to hyperbolic metrics in our constructions), then in the Riemannian case (a more general framework which allows us to make use of more flexible constructions).
To derive Corollary~\ref{coro:C}, we show that on closed hyperbolic surfaces the loops given by Theorem~\ref{theo:A} have embedded collars of uniform width.
To prove Theorem~\ref{theo:B}, we adapt known constructions of surfaces with large homological systole to obtain closed hyperbolic surfaces of large genus which asymptotically approach the limit case.\\

In the second part of this article, we study an invariant related to pants decompositions of surfaces. A pants decomposition is a collection of nontrivial disjoint simple loops on a surface so that the complementary region is a set of three-holed spheres (so-called pants).
L.~Bers~\cite{bers1,bers2} showed that on a hyperbolic surface one can always find such a collection with all curves of length bounded by a constant which only depends on the genus and number of cusps. These constants are generally called Bers' constants. This result was quantified and generalized to closed Riemannian surfaces of genus~$g$ by P.~Buser \cite{bus81,bus92}, and P.~Buser and M. S\"eppala \cite{BS92} who showed that the constants behave at least like $\sim \sqrt{g}$ and at most like $\sim g$. The correct behavior remains unknown, except in the cases of punctured spheres and hyperelliptic surfaces \cite{BP09}.

\medskip

We apply the same graph embedding technique as developed in the first part to obtain results on the minimal total length of such pants decompositions of surfaces. For a complete Riemannian surface of genus $g$ with $n$ ends whose area is normalized at $4\pi(g+\frac{n}{2}-1)$, P.~Buser's bounds mentioned before imply that one can always find a pants decomposition whose sum of lengths is bounded from above by $\sim (g+n)^2$. Our main result is the following.

\begin{theorem}
Let $M$ be a complete noncompact Riemannian surface of genus $g$ with $n$ ends whose area is equal to $4\pi(g+\frac{n}{2}-1)$.
Then $M$ admits a pants decomposition whose sum of the lengths is bounded from above by
$$
C_g\, n \log (n+1),
$$
where $C_g$ is an explicit genus dependent constant.
\end{theorem}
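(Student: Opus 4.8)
The plan is to reduce the statement to the genus-zero case and then pants-decompose a sphere with $n$ ends recursively, using the graph embedding technique of the first part to keep every cutting curve short.

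\smallskip

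\emph{Reduction to genus zero.} First I would localize the handles: I would produce a subsurface $\Sigma_{0}\subset M$ of genus $g$ bounded by at most $2g$ disjoint simple loops whose total length is bounded by a function of $g$ alone. Heuristically, one argues as for Theorem~\ref{theo:0} --- either $M$ already carries a short system of homologically independent loops cutting off the genus, or a sweepout argument produces a short non-separating loop, and one recurses on the genus. Since $\area(\Sigma_{0})\leq 4\pi(g+\tfrac n2-1)$, Buser's bound on Bers' constant yields a pants decomposition of $\Sigma_{0}$ of total length at most a constant depending only on $g$; when $n$ is bounded this already handles the whole surface and the theorem reduces to Buser's result. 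After cutting out $\Sigma_{0}$ we are left with a complete genus-zero surface $\Pi$ with $n+O(g)$ ends and area $\sim 2\pi n$, and it remains to pants-decompose $\Pi$ with total length $\lesssim C_{g}\, n\log(n+1)$.

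\smallskip

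\emph{Recursive decomposition of $\Pi$.} Here I would argue by induction on the number of ends. Given a genus-zero piece with $m$ ends, I would run the graph embedding construction of the first part --- concretely, a sweepout of the piece by curves separating its ends --- to extract a single simple closed curve $\gamma$ that splits the $m$ ends into two groups of comparable size and whose length is controlled, via a coarea estimate, by the area of the piece and by $m$. Cutting along $\gamma$ and iterating yields, after $O(\log n)$ steps, a pants decomposition of $\Pi$; the resulting curves can be taken pairwise disjoint and simple because they appear as level curves (or their duals) of the successive embedded graphs. Summing the lengths by recursion depth --- with $O(2^{j})$ curves at depth $j$ and $\lesssim\log n$ depths --- produces the bound $C_{g}\, n\log(n+1)$, the explicit constant coming from the reduction step and from the constants in the coarea estimate.

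\smallskip

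The main obstacle is the per-step length estimate: at each stage the curve must bisect the ends of a genus-zero piece while being short enough --- relative to its area and its number of ends --- that the level-by-level sums close up with only a logarithmic (rather than a power) loss in $n$. This is precisely what the graph embedding technique of the first part is designed to deliver, and it is the heart of the proof. A secondary, more technical, point is to carry out the genus reduction with a total boundary length depending on $g$ alone and independent of $n$, so that its contribution is absorbed into the constant $C_{g}$.
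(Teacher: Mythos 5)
Your overall architecture (reduce to a punctured sphere, then a divide-and-conquer decomposition of logarithmic depth giving $n\log n$) is the right one, but the step you yourself flag as ``the heart of the proof'' is precisely what is missing, and it is not something the nerve/graph technique of Section~\ref{sec:ell} delivers off the shelf: that technique produces short homologically independent loops via graph systolic inequalities, not balanced separators of marked points. A coarea sweepout does give a short separating curve, but nothing forces a single component of a short level set to split the ends into two groups of comparable size, and without balancedness your recursion depth can be of order $n$ rather than $\log n$. What the paper actually proves is Lemma~\ref{lem:path}: after an initial reduction guaranteeing that the \emph{marked homotopical systole} is at least $\ell=1$ (one splits along any admissible loop of length less than $1$ and inducts on the signature, as at the start of the proof of Theorem~\ref{thm:sumgenusg}), and after regularizing the metric so that small disks have definite area (Lemma~\ref{lem:regmetm}), a maximal packing by $\ell/4$-disks and the Voronoi dual graph yield a nonselfintersecting path through all the marked points of length $\lesssim \area(S)/\ell$; the nested loops surrounding the halves, quarters, etc.\ of this path are exactly the balanced cuts you need, and since at each of the $\sim\log_2 n$ generations the surrounded subpaths are disjoint, each generation contributes total length at most twice the length of the path, whence the $n\log(n+1)$ bound (Proposition~\ref{prop:S}). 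None of these ingredients --- the systole reduction, the regularization, the spanning path --- appear in your proposal, and without the systole lower bound the packing estimate, hence the length bound on the separators, fails.

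The genus-reduction step also contains two claims that are false as stated. One cannot in general cut off the genus by loops whose total length depends on $g$ alone: take a flat torus of area $\sim n$ (systole $\sim\sqrt{n}$) with $n$ small ends attached; every nonseparating loop has length $\gtrsim\sqrt{n}$. Likewise, Bers-type bounds for your subsurface $\Sigma_0$ depend on its area, which may be of order $n$, so its pants decomposition cannot have total length bounded in terms of $g$ alone. The paper instead uses the diastolic inequality (Proposition~\ref{prop:bers}) together with Lemma~\ref{lem:bershomology} to produce $g$ homologically independent disjoint loops of length at most $A_g\sqrt{n}$ each, cuts along them, caps the boundaries, and thereby reduces to the sphere case; these $\sqrt{n}$ contributions are harmless only because they are of lower order than $n\log(n+1)$, not because they are independent of $n$. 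With that correction your reduction survives, but as written both the reduction and the key per-step separator lemma are unsubstantiated, so the proof has genuine gaps.
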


In the case of a punctured sphere, it suffices to study the embedded graph mentioned above. In the more general case, this requires a bound on the usual Bers' constant which relies on F.~Balacheff and S.~Sabourau's bounds on the diastole, \cf~\cite{BS10}. Specifically, in Proposition \ref{prop:bers} we show that the diastole is an upper bound on lengths of pants decompositions, and thus, if one is not concerned with the multiplicative constants, the result of~\cite{BS10} provides an optimal square root upper-bound on Bers' constants for puncture growth, and an alternative proof of Buser's linear bounds for genus growth.

\medskip

As a corollary to the above we show that a hyperelliptic surface of genus $g$ admits a pants decomposition of total length at most $\sim g\log g$. This is in strong contrast with the general case according to a result of L.~Guth, H.~Parlier and R.~Young~\cite{GPY}: ``random" hyperbolic surfaces have all their pants decompositions of total length at least $\sim g^{\frac{7}{6}-\varepsilon}$ for any $\varepsilon>0$. \\

In the last part of this article, we consider the systolic area of finitely presentable groups, \cf~Definition~\ref{def:SG}.
From~\cite[6.7.A]{gro83}, the systolic area of an unfree group is bounded away from zero, see also \cite{KRS06}, \cite{RS08}, \cite{KKSSW} and \cite{BB} for simpler proofs and extensions.
The converse is also true: a finitely presentable group with positive systolic area is not free, \cf~\cite{KRS06}.
The systolic finiteness result of~\cite{RS08} regarding finitely presentable groups and their structure provides a lower bound on the systolic area of these groups in terms of their first Betti number when the groups have no free factor isomorphic to~$\Z$.
In particular, the systolic area of such a group $G$ goes to infinity with its first Betti number~$b_1(G)$.
Specifically,
$$
\Sys(G) \geq C \, (\log(b_1(G)+1))^{\frac{1}{3}}
$$
where $\Sys(G)$ is the systolic area of~$G$ and $C$ is some positive universal constant.

\medskip

In this article, we improve this lower bound.

\begin{theorem} \label{theo:G}
Let $G$ be a finitely presentable nontrivial group with no free factor isomorphic to~$\Z$.
Then
\begin{equation} \label{eq:GG}
\Sys(G)\geq C \, \frac{b_1(G)+1}{(\log(b_1(G)+2))^2}
\end{equation}
for some positive universal constant $C$.
\end{theorem}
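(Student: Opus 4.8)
The plan is to deduce Theorem~\ref{theo:G} from Theorem~\ref{theo:A} (in its Riemannian form) together with the structural finiteness result for groups with no free factor isomorphic to~$\Z$ that underlies the bound $\Sys(G)\geq C(\log(b_1(G)+1))^{1/3}$ quoted above. Recall that $\Sys(G)$ is realized (up to a harmless loss) by a piecewise-Riemannian $2$-complex $X$ with $\pi_1(X)\twoheadrightarrow G$ whose systole is $1$ and whose area is close to $\Sys(G)$. The first step is to reduce to a surface: by the finiteness result of~\cite{RS08}, there is a subcomplex $Y\subseteq X$ carrying enough of the homology, and by filling/attaching or by passing to a regular neighborhood one produces a closed orientable Riemannian surface $M$, with a map $M\to X$, such that $b_1(M)\geq c\,b_1(G)$, the systole of $M$ is bounded below by a universal constant (say $\geq 1/2$), and $\area(M)\leq C'\,\Sys(G)$. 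The genus of $M$ is thus $g\sim b_1(M)\gtrsim b_1(G)$.

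The second step applies Theorem~\ref{theo:A} to $M$ with, say, $\eta(g)=[\tfrac{1}{2}g]$, so $\lambda=\tfrac12<1$: one obtains $[\tfrac12 g]$ homologically independent loops on $M$ each of length at most $C_\lambda\,\frac{\log(g+1)}{\sqrt g}\sqrt{\area(M)}$. On the other hand, since the systole of $M$ is $\geq 1/2$, each of these loops has length $\geq 1/2$. Combining the two bounds gives
\begin{equation*}
\frac12 \leq C_\lambda\,\frac{\log(g+1)}{\sqrt g}\,\sqrt{\area(M)},
\end{equation*}
hence $\area(M)\geq c\,\frac{g}{(\log(g+1))^2}$. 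Since $\area(M)\leq C'\Sys(G)$ and $g\gtrsim b_1(G)$, this yields $\Sys(G)\geq C\,\frac{b_1(G)+1}{(\log(b_1(G)+2))^2}$, which is exactly~\eqref{eq:GG}. (The shift from $g$ to $b_1(G)$, and from $\log(g+1)$ to $\log(b_1(G)+2)$, is absorbed in the constant since $b_1(G)\leq 2g\leq$ const $\cdot\,b_1(G)$ once $b_1(G)$ is large, and small values of $b_1(G)$ are handled by the already-known positivity $\Sys(G)\geq c_0>0$ for nonfree $G$.)

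The main obstacle is the first step: producing a closed surface $M$ that simultaneously (i) dominates a definite fraction of $b_1(G)$, (ii) keeps its systole bounded below by a universal constant, and (iii) has area comparable to $\Sys(G)$. For (i) and (iii) one wants the structure theorem of~\cite{RS08}: it lets us replace the optimal complex $X$ by one with controlled combinatorics (a bound on the number of cells in terms of area) whose $1$-skeleton, after thickening to an orientable surface with boundary and then doubling or capping off, has first Betti number at least a fixed fraction of $b_1(G)$; here the hypothesis ``no free factor $\cong\Z$'' is exactly what prevents the homology from hiding in free directions that do not survive in a closed surface. For (ii) one must be careful that the thickening/capping does not create short noncontractible loops: this is arranged by scaling so the relevant features of $X$ have size $\gtrsim 1$ and by using collars of definite width around the added handles, a now-standard surgery in this circle of ideas (\cf~the surgeries in \cite{gro83}, \cite{RS08}, \cite{KRS06}). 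Once $M$ is in hand, the rest is the short computation above. I would also remark that the exponent $2$ on the logarithm and the linear dependence on $b_1(G)$ are optimal up to constants: the Buser--Sarnak type surfaces, which have systole $\gtrsim\log g$ on area $\sim g$, show by rescaling that one cannot do better, matching the discussion following Theorem~\ref{theo:A}.
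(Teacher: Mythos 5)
Your second step is fine as far as it goes: on a closed orientable surface with $\sys_\pi\geq 1/2$, Theorem~\ref{theo:A} with $\eta(g)=[g/2]$ does force $\area(M)\geq c\,g/(\log(g+1))^2$ (in fact a single short homologically nontrivial loop suffices for this). The genuine gap is your first step. The reduction from an (almost) optimal piecewise flat $2$-complex $X$ with $\pi_1(X)\simeq G$ to a \emph{closed surface} $M$ satisfying simultaneously $b_1(M)\geq c\,b_1(G)$, $\sys_\pi(M)\geq 1/2$ and $\area(M)\leq C'\,\Sys(G)$ is not a consequence of \cite{RS08} and is not a ``standard surgery'': the regular neighborhood of a $2$-complex is not a surface; thickening the $1$-skeleton discards the $2$-cells, so loops that were contractible in $X$ (thanks to the relations) become noncontractible on the thickened surface and can be arbitrarily short, destroying the systole lower bound; and capping off boundary components changes homology in an uncontrolled way, so homologically independent loops on $M$ need not correspond to anything nontrivial for $G$. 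If such a surface witness with comparable area, comparable Betti number and definite systole always existed, that would itself be a substantial theorem, and nothing in \cite{RS08} (whose structure theorem concerns Grushko decompositions, not surface models) provides it. So as written the argument does not close.

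For comparison, the paper never passes to a surface: it runs the graph argument of Theorem~\ref{theo:ell} directly on the complex. One first uses the regularization of \cite[Lemma~4.2]{RS08} and the local estimate of \cite[Theorem~3.5]{RS08} --- this is exactly where the hypothesis ``no free factor isomorphic to $\Z$'' enters, since it guarantees every disk of radius $\tfrac18\sys_\pi(X)$ has area $\geq \tfrac1{128}\sys_\pi(X)^2$ --- then normalizes $\area(X)=b_1(G)$, takes a maximal packing by $r_0$-disks, forms the nerve graph $\Gamma$ with a distance-nonincreasing map $\varphi:\Gamma\to X$ inducing $\pi_1(\Gamma)\twoheadrightarrow G$, and extracts a minimal subgraph $\Gamma_1$ on which $\varphi_*$ is an isomorphism on $H_1(\cdot;\R)$. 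Bollob\'as--Szemer\'edi--Thomason then gives $\sys(\Gamma_1)\leq C''\log(b_1(G)+1)$ with $\length(\Gamma_1)\leq C'\,b_1(G)$, and since the systolic loop of $\Gamma_1$ carries a nontrivial real homology class, its image in $X$ is noncontractible, whence $\sys_\pi(X)\leq C''\log(b_1(G)+1)$ and the stated bound. In short, the paper proves the needed systolic inequality intrinsically on the $2$-complex, whereas your proposal tries to quote the surface inequality and is blocked by an unproved complex-to-surface reduction.
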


In Example \ref{ex:GG}, we show that the order of the bound in inequality~\eqref{eq:GG} cannot be improved. The proof of Theorem~\ref{theo:G} follows the proof of Theorem~\ref{theo:0} in this somewhat different context. 

\section{Independent loops on surfaces with homological systole bounded from below} \label{sec:ell}

Here we show the following theorem which allows us to bound the lengths of an integer homology basis in terms of the genus and the homological systole of the surface.

\begin{theorem} \label{theo:ell}
Let $M$ be a closed orientable Riemannian surface of genus~$g$ with homological systole at least~$\ell$ and area equal to $4\pi(g-1)$.
Then there exist $2g$ loops $\alpha_{1},\ldots,\alpha_{2g}$ on~$M$ which induce a basis of~$H_{1}(M;\Z)$ such that
\begin{equation} \label{eq:gen}
\length(\alpha_{k}) \leq C_{0} \, \frac{\log(2g-k+2)}{2g-k+1} \, g,
\end{equation}
where $C_0=\frac{2^{16}}{\min\{1,\ell\}}$.\\

\noindent In particular:
\begin{enumerate}
\item the lengths of the $\alpha_{i}$ are bounded by~$C_{0} \, g$; \label{item1}
\item the median length of the $\alpha_{i}$ is bounded by~$C_{0} \, \log(g+1)$. \label{item2}
\end{enumerate}
\end{theorem}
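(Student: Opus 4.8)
The plan is to build the homology basis greedily, from the longest curve down to the shortest, by repeatedly extracting a short non-separating loop from the surface obtained after cutting along the curves already chosen, and to control lengths via an area/volume-entropy argument on a naturally embedded graph. Concretely, I would first fix a fine enough net on $M$ (points at pairwise distance $\sim \min\{1,\ell\}$) and form the associated \emph{Rips-type graph} $\Gamma$ whose vertices are the net points and whose edges join points at distance $\lesssim \min\{1,\ell\}$, with edge lengths the corresponding distances; since the homological systole is at least $\ell$, this graph carries all of $H_1(M;\Z)$, it embeds in $M$ up to homotopy, and its total edge length is $\lesssim \area(M)/\min\{1,\ell\} \sim g/\min\{1,\ell\}$ because balls of radius $\sim \ell$ in $M$ have area bounded below (here one uses that the systole condition forces, via a standard ball-volume estimate for surfaces, a uniform lower area bound on small metric balls). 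The factor $2^{16}$ and the form of $C_0$ come from making all these comparisons explicit.

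The core of the argument is then a counting/pigeonhole step on $\Gamma$, of the kind used to prove Bollobás-type or Gromov-type bounds on short homology generators of graphs. I would show: if $G$ is a connected graph of total length $L$ with first Betti number $b$, then for each $k$ there are $k$ homologically independent cycles each of length $\lesssim \frac{L}{b-k+1}\log(b-k+2)$. The mechanism is to pass to a subgraph where one has killed off the short cycles, use that a graph with first Betti number $m$ and girth $> t$ has total length $\gtrsim m t / \log m$ (a volume-entropy / Moore-bound estimate), and iterate: the $(b-k+1)$-st shortest independent cycle cannot be longer than $\frac{C L}{b-k+1}\log(b-k+2)$, else the corresponding "thick" part of $G$ would have too much length. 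Feeding in $b = 2g$ (the Betti number of $\Gamma$ maps onto $H_1(M;\Z)$, and one arranges a genuine basis by choosing cycles that are independent in $H_1(M;\Z)$ rather than merely in $H_1(\Gamma)$), $L \lesssim g/\min\{1,\ell\}$, and reindexing $m = 2g-k+1$ yields exactly \eqref{eq:gen}. The two itemized consequences are immediate: $k=1$ gives the $C_0 g$ bound for every curve, and $k=g+1$ (the median index) gives $\log(2g+1)\lesssim \log(g+1)$ for half the curves.

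The step I expect to be the main obstacle is the passage from cycles in the graph $\Gamma$ to a genuine \emph{basis} of $H_1(M;\Z)$ with simultaneous length control — i.e. ensuring the $2g$ chosen loops are not just independent but actually generate the integral homology, while the greedy extraction naturally only guarantees independence. The fix is to run the greedy procedure \emph{on $M$ cut along previously chosen curves}: at stage $j$ one has a (possibly disconnected) surface $M_j$ of total first Betti number $2g-j+1$, rebuilds the graph $\Gamma_j$ there, extracts a shortest non-separating loop, and checks it is primitive in $H_1$ and complements the previous ones to a partial basis; the Mayer–Vietoris / change-of-basis bookkeeping that this indeed terminates in an honest $\Z$-basis is the delicate part, together with verifying that cutting does not blow up the total length of the rebuilt graph beyond a controlled multiple of $g/\min\{1,\ell\}$. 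A secondary technical point is the uniform lower bound on areas of $\ell$-balls, for which one invokes the standard fact that on a surface a ball of radius $r \le \sys/2$ has area $\ge \pi r^2/?$ — or more safely $\ge c\,r^2$ — so that the net has at most $\sim g/\min\{1,\ell\}^2$ points and $\Gamma$ has the claimed total length.
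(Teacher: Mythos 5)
Your graph-theoretic core (pass to a subgraph carrying the homology, repeatedly delete an edge of a shortest nontrivial cycle, and invoke the Bollob\'as--Szemer\'edi--Thomason girth/length inequality to bound the $k$-th extracted cycle by $\frac{L}{b-k+1}\log(b-k+2)$) is essentially the paper's argument and is sound. The genuine gap is upstream, in the claim that the nerve/Rips graph has total length $\lesssim g/\min\{1,\ell\}$ because ``the systole condition forces, via a standard ball-volume estimate, a uniform lower area bound on small metric balls.'' No such standard fact exists: on a closed surface a ball of radius $r<\frac{1}{2}\mathrm{sys}_\pi$ centered at an arbitrary point need \emph{not} have area $\gtrsim r^2$ (attach a long thin finger to a torus of systole $1$; balls at the tip have area about $r$ times the arbitrarily small circumference). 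Worse, your hypothesis is only a lower bound on the \emph{homological} systole, so the surface may contain arbitrarily short separating, hence null-homologous, geodesics (two genus-one pieces joined by a thin neck); then the homotopical systole is tiny and no ball estimate at scale $\ell$ can hold at all. Without the area lower bound for $\ell$-balls, the net can have far more than $\sim g/\ell^2$ points, the length bound on $\Gamma$ fails, and the whole chain of estimates collapses. The paper needs two steps your proposal omits: (i) a surgery step reducing to homotopical systole $\geq\ell$ --- cut along each homotopically systolic loop of length $<\ell$ that is homologically trivial and cap with round hemispheres; by the paper's Lemma on minimal homology bases such loops are not crossed by a minimal basis, so basis lengths are unchanged while the area grows by at most $g\ell^2/\pi$; and (ii) Gromov's conformal regularization lemma \cite[5.6.C'']{gro83}, which replaces the metric by a conformal one with the same area, marked length spectrum at least that of the original, and area of every $R$-ball ($R\leq\frac{1}{2}\mathrm{sys}_\pi$) at least $R^2/2$. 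Neither step is a routine remark, and (ii) cannot be replaced by the ``standard fact'' you invoke.

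The secondary issue you flag --- upgrading independent cycles to an honest basis of $H_1(M;\Z)$ --- is handled in the paper without any cutting of $M$: one first passes to a subgraph $\Gamma_1$ with a minimal number of edges whose image still generates $H_1(M)$ (minimality forces $\varphi_*$ to be an isomorphism and $b_1(\Gamma_1)=2g$), and then deletes one edge of a shortest nontrivial cycle at each stage; the extracted cycles are automatically a basis of $H_1(\Gamma_1)$ (each uses exactly one deleted edge, giving a triangular change of basis with respect to the fundamental cycles of those edges) and are pushed forward by the distance-nonincreasing map $\varphi$. Your alternative --- re-running the greedy extraction on $M$ cut along the previously chosen curves and rebuilding a graph each time --- is both unnecessary and problematic: cutting along nonseparating curves creates boundary and possibly thin regions, so the systole and ball-area controls would have to be re-established on each cut surface, where exactly the difficulties of the previous paragraph recur, and the lengths of rebuilt graphs are not obviously controlled. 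Staying inside a single graph, as the paper does, makes the independence and basis bookkeeping trivial.
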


\begin{remark}
The linear upper bound in the genus of item~\eqref{item1} already appeared in~\cite{BS03} for hyperbolic surfaces, where the authors obtained a similar bound for the length of so-called canonical homology basis.
They also constructed a genus~$g$ hyperbolic surface all of whose homology bases have a loop of length at least~$C \, g$ for some positive constant~$C$.
This shows that the linear upper bound in~\eqref{item1} is roughly optimal.
However, the general bound~\eqref{eq:gen} on the length of the loops of a short homology basis, and in particular the item~\eqref{item2}, cannot be derived from the arguments of~\cite{BS03} even in the hyperbolic case.

The bound obtained in~\eqref{item2} is also roughly optimal.
Indeed, the Buser-Sarnak surfaces~\cite{BS94} have their homological systole greater or equal to $\frac{4}{3} \log(g)$ minus a constant.
\end{remark}

\begin{remark}
In a different direction, the homological systole of a ``typical" hyperbolic surface, where we take R.~Brooks and E.~Makover's definition of a random surface \cite{BM04}, is bounded away from zero. Therefore, the conclusion of the theorem holds for these typical hyperbolic surfaces with $\ell$ constant. However, their diameter is bounded by $C \log(g)$ for some constant~$C$. This shows that there exists a homology basis on these surfaces formed of loops of length at most $2C \log(g)$ (see Remark \ref{rem:diameter}). Thus, the upper bound in~\eqref{item1} is not optimal for these surfaces.
\end{remark}

\begin{remark}\label{rem:ell}
A non-orientable version of this theorem also holds. Recall that a closed non-orientable surface of genus $g$ is a surface homeomorphic to the connected sum of $g$ copies of the projective plane. Let $M$ be a closed non-orientable Riemannian surface of genus~$g$ with homological systole at least~$\ell$. Then there exist $g$ loops $\alpha_{1},\ldots,\alpha_{g}$ on~$M$ which induce a basis of~$H_{1}(M;\Z)$ such that
$$
\length(\alpha_{k}) \leq C_{0} \, \frac{\log(g-k+2)}{g-k+1} \, g,
$$
where $C_0=\frac{C}{\min\{1,\ell\}}$ for some positive constant $C$.
\end{remark}

Let us introduce some definitions and results, which will be used several times in this article.

\begin{definition} \label{def:min}
Let $(\gamma_i)_i$ be a collection of loops on a compact Riemannian surface~$M$ of genus~$g$ (possibly with boundary components).
The loops~$(\gamma_i)_i$ form a \emph{minimal homology basis} of~$M$ if
\begin{enumerate}
\item their homology classes form a basis of $H_1(M;\Z)$;
\item for every $k=1,\ldots,2g$ and every collection of $k$ homologically independent loops~$\gamma'_1,\ldots,\gamma'_k$, there exist $k$ loops $\gamma_{i_1},\ldots,\gamma_{i_k}$ of length at most 
$$
\sup_{1 \leq j \leq k} \length(\gamma'_j).
$$
\label{2}
\end{enumerate}
In this definition, one could replace the condition~\eqref{2} with the following condition:
\begin{enumerate}
\item[(2')] for every collection of loops~$(\gamma'_i)_i$ whose homology classes form a basis of $H_1(M;\Z)$, we have
$$
\sum_{i=1}^{2g} \length(\gamma_i) \leq \sum_{i=1}^{2g} \length(\gamma'_i).
$$
\end{enumerate}
\end{definition}

\begin{lemma} \label{lem:cut}
Let $M$ be a closed orientable Riemannian surface.
Let $\alpha$ be a homologically trivial loop such that the distance between any pair of points of~$\alpha$ is equal to the length of the shortest arc of~$\alpha$ between these two points.
Then no curve of a minimal homology basis of~$M$ crosses~$\alpha$.
\end{lemma}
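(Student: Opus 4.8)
The plan is to prove Lemma~\ref{lem:cut} by a cut-and-paste surgery argument that shows any minimal homology basis can be replaced by one disjoint from $\alpha$ without increasing lengths, and then invoke the minimality. The key observation is that the hypothesis on $\alpha$ — that the intrinsic distance along $\alpha$ between any two of its points equals the distance realized inside $\alpha$ — says exactly that $\alpha$ is a \emph{convex} loop in a strong metric sense: replacing an arc of some curve $\gamma$ by an arc of $\alpha$ never costs length, because the detour along $\alpha$ between two points of $\gamma \cap \alpha$ is no longer than the sub-arc of $\gamma$ it replaces (indeed any arc of $\gamma$ between two points $p,q\in\alpha$ has length at least $d_M(p,q)$, which by hypothesis equals the length of the shorter arc of $\alpha$ from $p$ to $q$).

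First I would fix a minimal homology basis $(\gamma_i)_i$ (which exists by a standard compactness/Arzel\`a--Ascoli argument on the space of homology bases made of geodesics, using condition~(2') in Definition~\ref{def:min}) and argue by contradiction: suppose some $\gamma_{i_0}$ crosses $\alpha$. Since $\alpha$ is homologically trivial, it separates $M$ into two pieces $M^+$ and $M^-$ (orientability is used here), and the algebraic intersection number of every loop with $\alpha$ is zero, so each $\gamma_i$ crosses $\alpha$ an even number of times (after a small perturbation making all intersections transverse and avoiding the non-smooth points). Then I would perform the following surgery: take two consecutive intersection points $p,q$ of $\gamma_{i_0}$ with $\alpha$ along $\gamma_{i_0}$, and swap the sub-arc of $\gamma_{i_0}$ between them for the shorter of the two arcs of $\alpha$ between $p$ and $q$. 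By the convexity estimate above this does not increase the total length, and it does not change the homology class of $\gamma_{i_0}$ (the difference is the boundary of a region, or more carefully, a homologically trivial loop since it is contained in $\alpha\cup\gamma_{i_0}$ and closing up the swapped arcs bounds). Iterating this finitely many times removes all intersections of $\gamma_{i_0}$ with $\alpha$, and doing it for every $\gamma_i$ produces a new homology basis $(\gamma_i')_i$ with $\sum \length(\gamma_i') \le \sum \length(\gamma_i)$ and each $\gamma_i'$ disjoint from $\alpha$ (after pushing slightly off $\alpha$).

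Now I would use minimality in the form~(2'): the inequality $\sum \length(\gamma_i') \le \sum \length(\gamma_i)$ together with minimality of $(\gamma_i)_i$ forces equality, so $(\gamma_i')_i$ is itself a minimal homology basis, and it is disjoint from $\alpha$. But if the \emph{original} basis $(\gamma_i)_i$ is an \emph{arbitrary} minimal homology basis, I need to rule out that it crosses $\alpha$, not merely produce some minimal basis that does not. To close this gap I would sharpen the surgery step to a strict inequality: if $\gamma_{i_0}$ meets $\alpha$ transversally at a point where it genuinely crosses (rather than being tangent), then near that crossing one can shortcut along $\alpha$ and strictly shorten, \emph{unless} the relevant sub-arc of $\gamma_{i_0}$ already ran along $\alpha$ — and a curve running along a positive-length piece of $\alpha$ and then leaving it transversally can be strictly shortened by cutting the corner, contradicting that a minimal basis consists of geodesics (length-minimizers in their homotopy-rel-surgery class). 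Hence a minimal basis curve cannot cross $\alpha$ at all.

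The main obstacle I anticipate is making the homology bookkeeping of the surgery fully rigorous: one must check that each elementary swap preserves the homology class of the curve being modified (so the collection remains a basis), handle the non-smooth points of $\alpha$ where the distance hypothesis is delicate, and be careful that ``consecutive intersection points'' and the induction on their number is well-defined after perturbation. A secondary subtlety is justifying the existence of a minimal homology basis and that its curves are geodesics — this is where one invokes that $M$ is a closed surface and uses a limiting argument, noting that homology classes are discrete so lengths cannot escape to a degenerate configuration. Once these points are settled, the convexity estimate does all the real work and the contradiction is immediate.
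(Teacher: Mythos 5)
Your surgery scheme has a genuine gap at its central step: the claim that swapping the sub-arc $c$ of $\gamma_{i_0}$ between two consecutive intersection points $p,q$ for the short arc $d$ of $\alpha$ ``does not change the homology class'' is false in general. The difference between the old and new curve is the $1$-cycle $c\cup d$, and being contained in $\alpha\cup\gamma_{i_0}$ (or in the closure of one side of $\alpha$) does not make it null-homologous: the side of $\alpha$ into which $c$ dips is a subsurface with boundary that can carry nontrivial classes of $H_1(M;\Z)$. For instance, if $\alpha$ separates $M$ into two one-holed tori and $\gamma_{i_0}$ enters the second torus and wraps around a handle there before returning, the swap changes $[\gamma_{i_0}]$ by that handle class. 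Consequently your modified collection $(\gamma_i')_i$ need not be a homology basis at all, and the comparison with condition~(2') collapses. (A smaller issue: ``$\alpha$ is homologically trivial hence separates'' presupposes $\alpha$ is embedded, which the lemma does not assume; and your final strict-inequality patch, needed because you only produce \emph{some} short basis rather than constrain the \emph{given} one, inherits the same problem since it rests on the surgered curves staying in the correct classes.)

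The paper's proof sidesteps exactly this point. Instead of trying to preserve the class of $\gamma$, it splits $\gamma$ at one returning arc: with $d$ the shortest arc of $\alpha$ joining the endpoints of $c$, the hypothesis gives $\length(d)\le\length(c)$ and $\length(d)\le\length(\gamma\setminus c)$, so both loops $c\cup d$ and $(\gamma\setminus c)\cup d$ (after smoothing) are shorter than $\gamma$, and with suitable orientations $[\gamma]=[c\cup d]+[(\gamma\setminus c)\cup d]$. Since $[\gamma]$ lies in the span of these two classes, any collection containing $\gamma$ can have $\gamma$ exchanged for one of the two strictly shorter loops while remaining a basis, which directly contradicts minimality (condition~(2) or~(2') of Definition~\ref{def:min}) for the given basis — no iteration, no claim of class preservation, and no separate treatment of the equality case is needed. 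If you want to salvage your approach, replace ``the swap preserves $[\gamma_{i_0}]$'' by this exchange argument at the first crossing you treat; the convexity estimate you isolated is indeed the only metric input, but the homological bookkeeping must go through the decomposition $[\gamma]=[\gamma_1]+[\gamma_2]$ rather than through an invariance that does not hold.
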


\begin{proof}
Let~$\gamma$ be a loop of~$M$ which crosses~$\alpha$.
Then $\gamma$ must cross~$\alpha$ at least twice. Consider an arc $c$ of $\gamma$ that leaves from $\alpha$ and then returns. Let $d$ be the shortest arc of~$\alpha$ connecting the two endpoints of~$c$. By assumption, $d$ is no longer than $c$ and $\gamma \setminus c$. Thus, both $c \cup d$ and $(\gamma \setminus c) \cup d$ are homotopic to loops which are shorter than~$\gamma$. These loops are obtained by smoothing out $c \cup d$ and $(\gamma \setminus c) \cup d$. Since $\gamma$ is homologuous to the sum of these loops, with proper orientations, the curve~$\gamma$ does not lie in a minimal homology basis.
\end{proof}

We continue with some more notations and definitions. We denote $\ell_{M}(\gamma)$ the infimal length of the loops of~$M$ freely homotopic to~$\gamma$.

\begin{definition} \label{def:mls}
Consider two metrics on the same surface and denote by $M$ and $M'$ the two metric spaces.
The marked length spectrum of $M$ is said to be greater or equal to the marked length spectrum of~$M'$ if
\begin{equation} \label{eq:mls}
\ell_M(\gamma) \geq \ell_{M'}(\gamma)
\end{equation}
for every loop~$\gamma$ on the surface.
Similarly, the two marked length spectra of $M$ and $M'$ are equal if equality holds in~\eqref{eq:mls} for every loop~$\gamma$ on the surface.
\end{definition}

\begin{definition} \label{def:sys}
Let $M$ be a compact nonsimply connected Riemannian manifold.

The \emph{homotopical systole} of~$M$, denoted by $\sys_{\pi}(M)$, is the length of its shortest noncontractible loop of~$M$.
A homotopical systolic loop of~$M$ is a noncontractible loop of~$M$ of least length.

Similarly, the \emph{homological systole} of~$M$, denoted by $\sys_{H}(M)$, is the length of its shortest homologically nontrivial loop of~$M$.
A homological systolic loop of~$M$ is a homologically nontrivial loop of~$M$ of least length.
\end{definition}

Note that $\sys_H(M) = \sys_1(M)$, \cf~Definition~\ref{def:ksys}.

\

In order to prove Theorem \ref{theo:ell}, we will need the following result from~\cite[5.6.C'']{gro83}.

\begin{lemma} \label{lem:regmet}
Let $M_{0}$ be a closed Riemannian surface and $0<R \leq \frac{1}{2} \, \sys_{\pi}(M_{0})$.
Then there exists a closed Riemannian surface $M$ conformal to $M_0$ such that
\begin{enumerate}
\item $M$ and $M_{0}$ have the same area;
\item the marked length spectrum of $M$ is greater or equal to that of $M_0$;
\item the area of every disk of radius $R$ in $M$ is greater or equal to~$R^{2}/2$.
\end{enumerate}
\end{lemma}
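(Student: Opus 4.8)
The statement to prove is Lemma~\ref{lem:regmet}, which is Gromov's regularization result \cite[5.6.C'']{gro83}. Here is how I would approach it.

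\medskip

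The plan is to produce the metric $M$ in the prescribed conformal class of $M_0$ by a variational/regularization procedure on the conformal factor. Write the desired metric as $g = e^{2f} g_0$ where $g_0$ is the given metric on $M_0$ and $f$ is a function to be chosen. The area constraint (1) becomes $\int_M e^{2f}\, dA_{g_0} = \area(M_0)$. The marked length spectrum condition (2) requires $\ell_{g}(\gamma) \geq \ell_{g_0}(\gamma)$ for every free homotopy class $\gamma$; since lengths under $e^{2f}g_0$ are computed by integrating $e^{f}$ along curves, a natural sufficient route is to arrange things so that the conformal factor does not shrink lengths ``on average'' along any loop — the cleanest way being to work with a factor that is $\geq 1$ outside a controlled region and to compensate. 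The key geometric input is the small-ball volume lower bound (3): the area of every $R$-ball in $M$ should be at least $R^2/2$. The first step I would carry out is to identify the obstruction to (3) for $M_0$ itself: balls of radius $R \leq \tfrac12 \sys_\pi(M_0)$ are embedded (no short noncontractible loops means the exponential map is injective on balls of this radius up to the usual cut-locus considerations), so the only way such a ball can have small area is if the metric is very ``thin'' there. The idea is to thicken these thin regions by increasing the conformal factor, while decreasing it elsewhere to preserve area, and to check that the decrease can be done without violating the marked length spectrum bound.

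\medskip

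Concretely, I would follow Gromov's argument: consider the function $R \mapsto \area(B_{g_0}(x,R))$ for each point $x$, and use a covering/Vitali-type argument together with a maximal function estimate to locate the set $Z$ of points around which the metric fails the $R^2/2$ lower bound. On (a neighborhood of) $Z$ one rescales the metric up by a bounded conformal factor so that the volume growth becomes at least quadratic at scale $R$; away from $Z$ one can afford to rescale down. The crucial point is that the ``bad'' set has small measure — quantitatively, because $R \leq \tfrac12\sys_\pi$ forces embeddedness, the total area lost in the thin regions is a controlled fraction of $\area(M_0)$ — so the compensating conformal factor needed elsewhere is close to $1$, and in particular stays $\leq 1$ only on a set through which every noncontractible loop spends a bounded proportion of its length, guaranteeing (2). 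One should also verify that after this local surgery the ball-area bound holds at \emph{every} point and \emph{every} radius up to $R$, not just the scale $R$; this follows because increasing the conformal factor only increases distances and areas in a compatible way, and the quadratic lower bound at the top scale propagates down (a ball of radius $r<R$ inside a thickened region still sees the quadratic growth, since we have made the metric locally close to flat at scale $R$).

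\medskip

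The main obstacle I anticipate is the interplay between conditions (2) and (3): raising the conformal factor to fix volume growth is easy in isolation, and lowering it to restore area is easy in isolation, but doing both simultaneously while keeping $\ell_g(\gamma) \geq \ell_{g_0}(\gamma)$ for \emph{all} $\gamma$ requires that the region where $e^{f} < 1$ be ``thin'' in a path-metric sense, not merely small in area. The technical heart is therefore a covering lemma showing that the thin set can be enclosed in a union of balls whose total $g_0$-diameter (or rather, whose ``transversal width'' as seen by geodesics) is negligible compared to $\sys_\pi(M_0)$, so that any loop crossing it can be rerouted or is lengthened elsewhere; the hypothesis $R \leq \tfrac12 \sys_\pi(M_0)$ is exactly what makes this possible, since it prevents the thin regions from wrapping around a short homotopically essential curve. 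Once this covering estimate is in place, properties (1)–(3) follow by a direct, if somewhat bookkeeping-heavy, verification, and I would not belabor those computations here.
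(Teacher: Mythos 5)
There is a genuine gap, and it sits at the heart of your scheme. You propose to \emph{increase} the conformal factor on the thin set $Z$ and to \emph{decrease} it on the complement to restore the area, and you justify condition (2) by saying loops crossing the shrunk region can be ``rerouted''. But (2) is a lower bound on the infimum of length in every free homotopy class: rerouting only bounds the new infimum from \emph{above}, which is the wrong direction. The shrunk region in your construction is the bulk of the surface, i.e.\ exactly where the minimizing geodesics live, so $\ell_M(\gamma)<\ell_{M_0}(\gamma)$ for any class whose geodesic meets it without compensation along its own trace; repairing this afterwards by a global rescaling destroys the exact area normalization (1). Moreover, the quantitative claim that makes your bookkeeping work --- that the bad set has small measure because $R\leq \tfrac12\sys_{\pi}(M_0)$ forces embedded balls --- is false: Buser's hairy torus (used elsewhere in this paper) has homotopical systole bounded below while most of its area lies in hairs where every $R$-ball has area far below $R^2/2$; thickening all of them to meet the $R^2/2$ bound can multiply the total area by an unbounded factor, so no ``close to $1$'' compensation elsewhere is available.

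For comparison, the lemma is quoted from \cite[5.6.C'']{gro83}, and the argument (which this paper carries out explicitly for the marked analogue, Lemmas~\ref{lem:lowerbound} and~\ref{lem:regmetm}) goes the other way: one never increases the metric locally. One introduces the tension/height function $h(x)$, the infimum over noncontractible loops through $x$ of $\length(\gamma)-\ell_{M_0}(\gamma)$, and deforms conformally with factor $\leq 1$, shrinking only where every loop has positive slack; this keeps the marked length spectrum unchanged, does not increase the area, and makes $h$ arbitrarily small. Condition (3) is then not engineered by local thickening but deduced: for $r\leq R\leq \tfrac12\sys_{\pi}$ the disk $D(x,r)$ is contractible, so a nearly minimizing noncontractible loop through $x$ can be shortcut along $\partial D(x,r)$, forcing $\length \, \partial D(x,r)\geq 2r-h(x)$, and the coarea formula gives $\area \, D(x,R)\geq (R-h(x)/2)^2\geq R^2/2$ once $h$ is small. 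Finally (1) is restored by a single global rescaling by a constant $\geq 1$, which only increases the marked length spectrum and preserves the disk bound. The tension function, the shrink-only deformation preserving the length spectrum, and the circle-length/coarea estimate are the three ingredients missing from your proposal, and without them the simultaneous verification of (1)--(3) does not go through.
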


We can now proceed to the proof of Theorem~\ref{theo:ell}.

\begin{proof}[Proof of Theorem~\ref{theo:ell}] \mbox{ }

\medskip

Without loss of generality, we can suppose that $\ell$ is at most~$1$.

\medskip

\noindent {\it Step 1.} 
Let us show first that we can assume that the homotopical systole of~$M$, and not merely the homological systole, is bounded from below by~$\ell$.
Suppose that there exists a homotopical systolic loop~$\alpha$ of~$M$ of length less than~$\ell$ which is homologically trivial.
Clearly, the distance between any pair of points of~$\alpha$ is equal to the length of the shortest arc of~$\alpha$ connecting this pair of points. (Note that both the homotopical and homological systolic loops have this property for closed surfaces.) We split the surface along the simple loop~$\alpha$ and attach two round hemispheres along the boundary components of the connected components of the surface.
We obtain two closed surfaces $M'$ and~$M''$ of genus less than~$g$.
The sum of their areas is equal to $\area(M)+\frac{\ell^2}{\pi}$.

Collapsing $\alpha$ to a point induces an isomorphism $H_1(M;\Z) \to H_1(M' \vee M'';\Z) \simeq H_1(M' \coprod M'';\Z)$.
From Lemma~\ref{lem:cut}, the loops of a minimal homology basis of~$M$ do not cross~$\alpha$.
Therefore, they also lie in the disjoint union $M' \coprod M''$.
Conversely, every loop of $M' \coprod M''$ can be deformed without increasing its length into a loop which does not go through the two round hemispheres (that we previously attached), and therefore also lies in~$M$.
This shows that two minimal homology basis of~$M$ and $M' \coprod M''$ have the same lengths.

We repeat the previous surface splitting to the new surfaces on and on as many times as possible.
After at most~$g$ steps (\ie, $g$ cuts), this process stops.
By construction, we obtain a closed Riemannian surface~$N$ with several connected components such that
\begin{enumerate}
\item the homotopical systole of~$N$ is at least~$\ell$;
\item $H_1(N;\Z)$ is naturally isomorphic to $H_1(M;\Z)$;
\item two minimal homology basis of~$M$ and $N$ have the same length.
\end{enumerate}
Furthermore, we have
\begin{eqnarray*}
\area(N) & \leq &\area(M) + g \, \frac{\ell^2}{\pi} \\
& \leq & 2^4 (1+ \ell^2) \, g.
\end{eqnarray*}
Thus, it is enough to show that the conclusion of Theorem~\ref{theo:ell} holds for every (not necessarily connected) closed Riemannian surface~$M$ of genus~$g$ with homotopical systole at least~$\ell$ and area at most $2^4 (1+ \ell^2) \, g$. \\

\noindent{\it Step 2.}
Assume now that $M$ is such a surface.
Fix $r_0 < \ell/8$, say $r_0=\ell/16$.
By Lemma \ref{lem:regmet}, we can suppose that any disk of radius~$r_{0}$ of~$M$ has area at least~$r_{0}^2/2$.
Consider a maximal system of disjoint disks $\{D_i\}_{i \in I}$ of radius~$r_{0}$.
Since each disk~$D_i$ has area at least $r_{0}^2/2$, the system admits at most $2\, \area(M)/r_{0}^2$ disks.
That is,
\begin{eqnarray} 
|I| & \leq & \frac{2^5}{r_{0}^2} \, (1+ \ell^2) \, g \nonumber \\
& \leq & 2^{13} \, (1 + \frac{1}{\ell^2}) \, g. \label{eq:I}
\end{eqnarray}
As this system is maximal, the disks $2D_{i}$ of radius~$2r_{0}$ with the same centers~$x_{i}$ as~$D_{i}$ cover~$M$. \\

\noindent Let $2D_{i}+\varepsilon$ be the disks centered at~$x_{i}$ with radius~$2r_{0}+\varepsilon$, where $\varepsilon>0$ satisfies $4r_{0}+2\varepsilon<\ell/2 \leq \sys_{\pi}(M)/2$.
Consider the $1$-skeleton~$\Gamma$ of the nerve of the covering of~$M$ by the disks~$2D_{i}+\varepsilon$.
In other words, $\Gamma$ is a graph with vertices~$\{v_{i}\}_{i \in I}$ corresponding to the centers ~$\{x_{i}\}_{i \in I}$ where two vertices $v_{i}$ and~$v_{j}$ are connected by an edge if and only if $2D_{i}+\varepsilon$ and~$2D_{j}+\varepsilon$ intersect each other.
Denote by $v$, $e$ and $b$ its number of vertices, its number of edges and its first Betti number.
We have the relation $b \geq e-v+1$ (with equality if the graph is connected). \\

\noindent Endow the graph $\Gamma$ with the metric such that each edge has length $\ell/2$.
Consider the map \mbox{$\varphi:\Gamma \to M$} which takes each edge with endpoints $v_{i}$ and~$v_{j}$ to a geodesic segment connecting $x_{i}$ and~$x_{j}$.
This segment is not necessarily unique, but we can choose one.
Since the points~$x_{i}$ and~$x_{j}$ are distant from at most $4r_{0}+2\varepsilon<\ell/2$, the map $\varphi$ is distance nonincreasing.

\begin{lemma} \label{lem:epi}
The map $\varphi$ induces an epimorphism $\pi_{1}(\Gamma) \to \pi_{1}(M)$.

In particular, it induces an epimorphism in integral homology.
\end{lemma}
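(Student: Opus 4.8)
The plan is to show that $\varphi$ is $\pi_1$-surjective by a standard covering-space / van Kampen argument, using that the disks $2D_i+\varepsilon$ form a cover of $M$ by sets small enough that any loop contained in one of them is contractible in $M$. First I would fix a vertex $v_{i_0}$ and a basepoint $x_{i_0}\in M$, and take an arbitrary loop $c$ in $M$ based at $x_{i_0}$. Subdivide $c$ into finitely many subarcs $c_1,\dots,c_m$ each of which lies entirely in a single covering disk $2D_{i_k}+\varepsilon$; this is possible by a Lebesgue number argument since the $2D_i+\varepsilon$ form an open cover. For consecutive arcs, $2D_{i_k}+\varepsilon$ and $2D_{i_{k+1}}+\varepsilon$ share the point between $c_k$ and $c_{k+1}$, hence intersect, so there is an edge of $\Gamma$ joining $v_{i_k}$ to $v_{i_{k+1}}$; concatenating these edges gives a loop $\tilde c$ in $\Gamma$ based at $v_{i_0}$.

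Next I would check that $\varphi(\tilde c)$ is homotopic to $c$ rel basepoint in $M$. For each $k$, both the arc $c_k$ (from $x_{i_k}$-region to $x_{i_{k+1}}$-region, after inserting short connecting arcs to the centers) and the geodesic edge-image $\varphi(\text{edge }v_{i_k}v_{i_{k+1}})$ lie in the union $2D_{i_k}+\varepsilon \cup 2D_{i_{k+1}}+\varepsilon$, which has diameter at most $2(2r_0+\varepsilon) + 2(2r_0+\varepsilon) \le 8r_0+2\varepsilon < \ell \le \sys_\pi(M)$ — more precisely the relevant loop formed by $c_k$ followed back by the edge has length at most $4r_0+2\varepsilon$ plus the length of the two paths to the centers, which I would arrange to be $< \sys_\pi(M)$ using the explicit inequality $4r_0+2\varepsilon<\ell/2$ together with $2r_0<\ell/4$. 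A loop in $M$ of length less than the homotopical systole is contractible, so these local pieces match up to homotopy, and assembling them shows $[c] = [\varphi(\tilde c)] = \varphi_*[\tilde c]$ in $\pi_1(M,x_{i_0})$. Hence $\varphi_*$ is surjective.

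The last sentence of the lemma is then immediate: $\pi_1(\Gamma)\to\pi_1(M)$ being onto implies $H_1(\Gamma;\Z)\to H_1(M;\Z)$ is onto by the Hurewicz-type naturality (abelianization is a right-exact functor, so a surjection of groups induces a surjection of abelianizations), and likewise with $\Z$-coefficients replaced by any coefficients.

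The main obstacle — really the only delicate point — is the bookkeeping in the homotopy step: one must insert, for each subdivision point, a short ``spoke'' from the point where $c$ crosses between two disks to the two relevant centers $x_{i_k}$, $x_{i_{k+1}}$, and verify that each resulting closed loop has length strictly below $\sys_\pi(M)$ so that it bounds a disk. This forces one to be careful about how the constants $r_0=\ell/16$ and $\varepsilon$ (chosen with $4r_0+2\varepsilon<\ell/2$) interact, and about basepoint/connectivity issues if $\Gamma$ or $M$ is disconnected — in the disconnected case one simply argues componentwise, since the cover restricts to a cover of each component. Everything else is the routine van Kampen / nerve-of-a-good-cover mechanism.
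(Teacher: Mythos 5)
Your proposal is correct and follows essentially the same route as the paper: subdivide the loop, join the subdivision points to nearby disk centers by short ``spokes'', observe that each resulting small loop has length below $\sys_{\pi}(M)$ and is therefore contractible, so the original loop is homotopic to the $\varphi$-image of an edge-loop of $\Gamma$, and surjectivity on $H_1$ follows by abelianization. The only bookkeeping difference is that the paper subdivides a (rectifiable, in fact geodesic) loop into arcs of length less than $\varepsilon$ rather than invoking a Lebesgue-number subdivision, which is the cleaner way to guarantee the bound $\length(\alpha_k)\leq 2(4r_0+\varepsilon)<\sys_{\pi}(M)$, since an arc lying in a disk $2D_i+\varepsilon$ need not have length controlled by the disk's diameter.
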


\begin{proof}[Proof of Lemma \ref{lem:epi}]
Let $c$ be a geodesic loop of~$M$.
Divide $c$ into segments $c_{1}, \ldots, c_{m}$ of length less than~$\varepsilon$.
Denote by $p_{k}$ and~$p_{k+1}$ the endpoints of~$c_{k}$ with $p_{m+1}=p_{1}$.
Since the disks~$2D_{i}$ cover~$M$, every point~$p_{k}$ is at distance at most~$2r_{0}$ from a point~$q_{k}$ among the centers~$x_{i}$.
Consider the loop
$$
\alpha_k= c_k \cup [p_{k+1},q_{k+1}] \cup [q_{k+1},q_k] \cup [q_k,p_k],
$$
where $[x,y]$ denotes a segment joining $x$ to~$y$. 
Then
$$
\length(\alpha_k) \leq 2 \, (4 r_{0} + \varepsilon) < \sys_{\pi}(M).
$$ 
Thus, the loops $\alpha_k$ are contractible.
Therefore, the loop~$c$ is homotopic to a piecewise geodesic loop~$c'= (q_1, \dots, q_{m})$.

Since the distance between the centers $q_{k}=x_{i_{k}}$ and $q_{k+1}=x_{i_{k+1}}$ is at most $4r_{0}+\varepsilon$, the vertices $v_{i_{k}}$ and~$v_{i_{k+1}}$ are connected by an edge in~$\Gamma$.
The union of these edges forms a loop~$(v_{i_{1}}, \ldots, v_{i_{m}})$ in~$\Gamma$ whose image by~$\varphi$ agrees with~$c'$ and is homotopic to the initial loop~$c$.
Hence the result.
\end{proof}

\noindent Let $\kk$ be a field.
Consider a subgraph~$\Gamma_{1}$ of~$\Gamma$ with a minimal number of edges such that the restriction of~$\varphi$ to~$\Gamma_{1}$ still induces an epimorphism in homology with coefficients in~$\kk$.
The graph~$\Gamma_{1}$ inherits the simplicial structure of~$\Gamma$.

\begin{lemma} \label{lem:isom}
The epimorphism $\varphi_{*}:H_{1}(\Gamma_{1};\kk) \to H_{1}(M;\kk)$ induced by~$\varphi$ is an isomorphism.

In particular, the first Betti number~$b_{1}$ of~$\Gamma_{1}$ is equal to~$2g$.
\end{lemma}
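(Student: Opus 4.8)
The plan is to prove that the minimal-edge subgraph $\Gamma_1$ not only surjects onto $H_1(M;\kk)$ but does so injectively, by exploiting the minimality of the number of edges. First I would recall that $H_1$ of any graph is free, with rank equal to its first Betti number $b_1(\Gamma_1) = e_1 - v_1 + (\text{number of components})$, where $e_1,v_1$ are the edge and vertex counts of $\Gamma_1$. Since $\varphi_*:H_1(\Gamma_1;\kk)\to H_1(M;\kk)$ is an epimorphism onto a space of dimension $2g$, we automatically get $b_1(\Gamma_1)\geq 2g$. The whole content is the reverse inequality $b_1(\Gamma_1)\leq 2g$, which together with surjectivity of a linear map between finite-dimensional spaces forces it to be an isomorphism.

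The key step is the following. Suppose for contradiction that $b_1(\Gamma_1) > 2g = \dim_\kk H_1(M;\kk)$. Then the kernel of $\varphi_*$ restricted to $H_1(\Gamma_1;\kk)$ is nonzero, so there is a nonzero class $z \in H_1(\Gamma_1;\kk)$ with $\varphi_*(z)=0$. I would represent $z$ as a cycle supported on some subset of edges of $\Gamma_1$, and pick an edge $\epsilon$ that actually appears in $z$ with nonzero coefficient. Let $\Gamma_1' = \Gamma_1 \setminus \epsilon$ be the graph obtained by deleting that (open) edge. The point is that removing $\epsilon$ drops $b_1$ by exactly one (the edge $\epsilon$ lies on a cycle, so it is not a bridge), hence $H_1(\Gamma_1';\kk)$ has dimension $b_1(\Gamma_1)-1$, and I claim $\varphi|_{\Gamma_1'}$ still induces a surjection onto $H_1(M;\kk)$. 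This would contradict the minimality of the edge count of $\Gamma_1$.

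To verify the claim, I would use the long exact sequence of the pair, or more concretely the Mayer--Vietoris/quotient description: there is a natural surjection $H_1(\Gamma_1;\kk) \to H_1(\Gamma_1/\epsilon; \kk)$... actually the cleaner route is: the inclusion $\Gamma_1' \hookrightarrow \Gamma_1$ induces a map on $H_1$ whose image is precisely the subspace of cycles not using $\epsilon$, and this subspace has codimension exactly one in $H_1(\Gamma_1;\kk)$ (it is the kernel of the coefficient-of-$\epsilon$ functional on cycles, which is nonzero since $\epsilon$ lies on a cycle). Now $z$ spans a line transverse to this codimension-one subspace, and $\varphi_*(z)=0$; therefore $\varphi_*$ already vanishes on a complement to $H_1(\Gamma_1';\kk)$ inside $H_1(\Gamma_1;\kk)$, so $\varphi_*(H_1(\Gamma_1';\kk)) = \varphi_*(H_1(\Gamma_1;\kk)) = H_1(M;\kk)$. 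Thus $\Gamma_1'$ has strictly fewer edges yet still induces an epimorphism, contradicting minimality. Hence no such $z$ exists, $\varphi_*$ is injective, and being a surjection of finite-dimensional vector spaces, it is an isomorphism; comparing dimensions gives $b_1(\Gamma_1) = 2g$.

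The main obstacle — really the only subtle point — is being careful that the chosen edge $\epsilon$ is genuinely on a cycle (so that deleting it decreases $b_1$ rather than disconnecting a component and leaving $b_1$ unchanged while dropping $v$ and $e$ together). This is guaranteed because $\epsilon$ appears with nonzero coefficient in the cycle $z$: any edge in the support of a nonzero $1$-cycle lies on a cycle of the graph, hence is not a bridge, so $\Gamma_1'$ has the same number of connected components as $\Gamma_1$ and $b_1(\Gamma_1') = b_1(\Gamma_1)-1$. Everything else is standard linear algebra over $\kk$ and elementary graph homology, valid over an arbitrary field as required for the later applications.
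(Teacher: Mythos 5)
Your proof is correct and follows essentially the same route as the paper: both arguments show that a nontrivial element of $\ker\varphi_{*}$ would allow one to delete an edge carried by that cycle so that $\varphi$ restricted to the smaller graph still surjects onto $H_{1}(M;\kk)$, contradicting the minimality of the number of edges of $\Gamma_{1}$. The paper phrases this by contracting a maximal tree and removing the edge of a bouquet circle whose image is redundant, while you work directly with a kernel cycle and the coefficient-of-$\epsilon$ functional, but the mechanism is identical.
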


\forget
\begin{proof}[Proof of Lemma \ref{lem:isom}]
Suppose the contrary and remove an edge from a simple cycle~$c$ representing a nontrivial element of the kernel of~$\varphi_{*}$.
The resulting graph~$\Gamma'_{1}$ has fewer edges than~$\Gamma_{1}$.
Adding a suitable integral multiple of~$c$ to any cycle~$\sigma$ of~$\Gamma_{1}$ yields a new cycle~$\sigma'$ lying in~$\Gamma'_{1}$.
The cycles $\sigma$ and~$\sigma'$ are sent to the same homology class by~$\varphi$.
Thus, the restriction of~$\varphi$ to~$\Gamma'_{1}$ still induces an epimorphism in integral homology, which is absurd by definition of~$\Gamma_{1}$.
\end{proof}
\forgotten

\begin{proof}[Proof of Lemma \ref{lem:isom}]
The graph~$\Gamma_1$ is homotopy equivalent to a union of bouquets of circles $c_1,\cdots,c_m$ (simply identify a maximal tree in each connected component of~$\Gamma_1$ to a point).
The homology classes $[c_1],\cdots,[c_m]$ of these circles form a basis in homology with coefficients in~$\kk$.
If the image by~$\varphi_*$ of one of these homology classes~$[c_{i_0}]$ lies in the vector space spanned by the images of the others, we remove the edge of~$\Gamma_1$ corresponding to the circle~$c_{i_0}$.
The resulting graph~$\Gamma'_{1}$ has fewer edges than~$\Gamma_1$ and the restriction of~$\varphi$ to~$\Gamma'_{1}$ still induces an epimorphism in homology with coefficients in~$\kk$, which is absurd by definition of~$\Gamma_{1}$.
\end{proof}

\noindent At least $b-b_{1}$ edges were removed from $\Gamma$ to obtain~$\Gamma_{1}$.
As the length of every edge of~$\Gamma$ is equal to~$\ell/2$, we have
\begin{eqnarray}
\length(\Gamma_{1}) & \leq & \length(\Gamma) - (b-b_{1}) \, \frac{\ell}{2} \nonumber \\
 & \leq & (e-b+2g) \, \frac{\ell}{2} \nonumber \\
 & \leq & (v-1+2g) \, \frac{\ell}{2} \label{eq:l1} 
\end{eqnarray}

\noindent Let us contruct by induction $n$ graphs~$\Gamma_{n} \subset \cdots \subset \Gamma_{1}$ and $n$ (simple) loops~$\displaystyle \{ \gamma_{k} \}_{k=1}^{n}$ with $n \leq 2g$ such that
\begin{enumerate}
\item $\gamma_{k}$ is a systolic loop of~$\Gamma_{k}$;
\item the loops~$(\gamma_{k})_{k=1}^{n-1}$ induce a basis of a supplementary subspace of~$H_{1}(\Gamma_{n};\kk)$ in~$H_{1}(\Gamma_{1};\kk)$.
\end{enumerate}
For $n=1$, the result clearly holds. \\

\noindent Suppose we have constructed $n$ graphs~$\{\Gamma_{k}\}_{k=1}^{n}$ and $n$ loops~$\{\gamma_{k}\}_{k=1}^{n}$ satisfying these properties.
Remove an edge of~$\Gamma_{n}$ through which $\gamma_{n}$ passes.
We obtain a graph~$\Gamma_{n+1} \subset \Gamma_{n}$ such that $H_{1}(\Gamma_{n};\kk)$ decomposes into the direct sum of $H_{1}(\Gamma_{n+1};\kk)$ and~$\kk \, [\gamma_{n}]$, where $[\gamma_{n}]$ is the homology class of~$\gamma_{n}$ (recall that $\gamma_{n}$ generates a nontrivial class in homology).
That is,
\begin{equation} \label{eq:decomp}
H_{1}(\Gamma_{n};\kk) = H_{1}(\Gamma_{n+1};\kk) \oplus \kk \, [\gamma_{n}].
\end{equation}
Let $\gamma_{n+1}$ be a systolic loop of~$\Gamma_{n+1}$.
The condition on~$\{ \gamma_{k} \}_{k=1}^{n}$ along with the decomposition~\eqref{eq:decomp} shows that the loops~$(\gamma_{k})_{k=1}^{n}$ induce a basis of a supplementary subspace of~$H_{1}(\Gamma_{n+1};\kk)$ in~$H_{1}(\Gamma_{1};\kk)$.
This concludes the construction by induction. \\

\noindent The graph~$\Gamma_{k}$ has $k-1$ fewer edges than~$\Gamma_{1}$.
Hence, from~\eqref{eq:l1}, we deduce that
\begin{eqnarray}
\length(\Gamma_{k}) & \leq & \length(\Gamma_{1}) - (k-1) \, \frac{\ell}{2} \nonumber \\
 & \leq & (v+2g - k) \, \frac{\ell}{2}. \label{eq:vg}
\end{eqnarray}

\noindent By construction, the first Betti number~$b_{k}$ of~$\Gamma_{k}$ is equal to $b_{1}-k+1=2g-k+1$.
Thus, Bollob\'as-Szemer\'edi-Thomason's systolic inequality on graphs~\cite{BT97,BS02} along with the bounds \eqref{eq:vg} and~\eqref{eq:I} implies that
\begin{eqnarray*}
\length(\gamma_{k}) = \sys(\Gamma_{k}) & \leq & 4 \, \frac{\log(1+b_{k})}{b_{k}} \, \length(\Gamma_{k}) \nonumber \\
& \leq & 2^{15} \, \left( \ell + \frac{1}{\ell} \right) \, \frac{\log(2g-k+2)}{2g-k+1} \, g. 
\end{eqnarray*}
Hence, 
$$
\length(\gamma_{k}) \leq C_{0} \, \frac{\log(2g-k+2)}{2g-k+1} \, g,
$$
where $\displaystyle C_{0} = \frac{2^{16}}{\min\{1,\ell\}}$ (recall we can assume that $\ell \leq 1$). 

Since the map $\varphi$ is distance nonincreasing and induces an isomorphism in homology, the images of the loops~$\gamma_{k}$ by~$\varphi$ yield the desired curves~$\alpha_k$ on~$M$. \\

Now, recall that
$$
\length(\gamma_{1}) \leq \cdots \leq \length(\gamma_{2g}).
$$
We deduce that the curves~$\alpha_k$ are of length at most
$$
\length(\gamma_{2g}) \leq C_0 \, g
$$
and that the median length is bounded from above by
$$
\length(\gamma_{g+1}) \leq C_0 \, \log(g+1). 
$$
This concludes the proof of Theorem \ref{theo:ell}.
\end{proof}

\forget
\begin{remark} \label{rem:C0}
When the metric is hyperbolic, the disks~$D_i$ are embedded and have area \mbox{$2 \pi \, (\cosh(r_0)-1)$}.
In this case, we can take $C_0= \frac{8}{s_0} \, (\ell+\frac{1}{\ell})$, where $s_0 = \cosh(r_0)-1$.
\end{remark}

\bigskip

We conclude this section with some consequences concerning hyperbolic surfaces. \\

The existence of hyperbolic surfaces whose homological systole satisfies a near-optimal asymptotic behaviour in~$\log(g)$ follows from the arithmetic construction of Buser-Sarnak~\cite{BS94}, extended in Katz-Schnider-Vishne~\cite{KSV07}.
More specifically, these constructions yield hyperbolic surfaces of genus~$g$ (in certain genus) whose homological systole is bounded from below by $\frac{4}{3} \log(g)-c$, where $c$ is a constant. \\

Keeping in mind this result, the previous theorem leads to the following bound on the length of short homology basis on these hyperbolic surfaces with large homological systole. 

\begin{corollary} \label{prop:BS}
Given a real $c$, every closed hyperbolic orientable surface of genus~$g$ whose homological systole is bounded from below by $\frac{4}{3} \log(g)-c$ admits a homology basis consisting of loops of length at most
$$
C_{1} \, g^{\frac{5}{6}} \, \log g,
$$
where $C_1$ is a constant depending only on~$c$. 
\end{corollary}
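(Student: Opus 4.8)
The plan is to apply Theorem~\ref{theo:ell} directly, using the hypothesis that the homological systole satisfies $\sys_H(M) \geq \frac{4}{3}\log g - c$ to control the constant $C_0$, and then bound the resulting lengths by choosing the right index $k$ in the estimate~\eqref{eq:gen}. Set $\ell = \frac{4}{3}\log g - c$, so that for $g$ large we have $\min\{1,\ell\} = 1$ and $C_0 = 2^{16}$; hence the constant coming out of Theorem~\ref{theo:ell} is \emph{universal} (the dependence on $c$ will only enter for the finitely many small genera where $\ell < 1$, which we absorb into $C_1$). For such $g$, Theorem~\ref{theo:ell} produces a homology basis $\alpha_1,\dots,\alpha_{2g}$ with $\length(\alpha_k) \leq 2^{16}\,\frac{\log(2g-k+2)}{2g-k+1}\,g$.

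The key observation is that the bound~\eqref{eq:gen} is largest for small $k$ (where $2g-k+1 \approx 2g$), giving only the trivial linear bound $\sim C_0 g$; it improves as $k$ grows. So the strategy is: for the ``early'' loops $\alpha_1,\dots,\alpha_{k_0}$ with $k_0$ to be chosen, we do \emph{not} use Theorem~\ref{theo:ell} but instead invoke the lower bound on the systole together with a counting/area argument to replace them by shorter homologically independent loops; for the ``late'' loops $\alpha_{k_0+1},\dots,\alpha_{2g}$, the estimate~\eqref{eq:gen} already gives $\length(\alpha_k) \lesssim \frac{g\log g}{2g-k_0}$. Actually the cleanest route is this: a closed hyperbolic surface of genus $g$ with homological systole $\geq \ell$ has disjoint embedded collars of definite width around any system of $\leq$ some number of short disjoint loops, but more to the point, the number of homologically independent loops of length $\leq L$ is bounded below by a Gromov-type count. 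We want roughly $g - g^{5/6}$ of the basis loops to be short; equivalently we are willing to sacrifice $\sim g^{5/6}$ of them. Taking $k_0 = 2g - g^{5/6}$, the remaining $g^{5/6}$ loops $\alpha_{k_0+1},\dots,\alpha_{2g}$ satisfy $\length(\alpha_k) \leq 2^{16}\frac{\log(g^{5/6}+2)}{g^{5/6}}\,g \lesssim g^{1/6}\log g$, which is even better than claimed; so in fact the whole basis $\alpha_1,\dots,\alpha_{2g}$ from Theorem~\ref{theo:ell} already has all loops of length $\leq 2^{16} g$, and we need a genuinely different idea to get $g^{5/6}\log g$ for the worst ones.

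The resolution: the $\sim g$ worst loops cannot all be bounded by $g^{5/6}\log g$ using only Theorem~\ref{theo:ell}, so we must instead combine it with the lower systole bound via the embedded-disk argument inside the proof of Theorem~\ref{theo:ell}. Here is the point I would push: in Step~2 of that proof, when the homotopical systole is $\geq \ell \sim \log g$, one may take $r_0 = \ell/16 \sim \log g$, so the disks $D_i$ are embedded hyperbolic disks of radius $\sim \log g$, each of area $2\pi(\cosh r_0 - 1)$, which is \emph{polynomial in $g$} (roughly $g^{?}$); hence the number of disks $|I| \lesssim \area(M)/\area(D_i) \lesssim (g)/(\text{poly}) $ can be made $\lesssim g^{1-\delta}$ for a definite $\delta$. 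Carefully, with $r_0 = \frac{1}{12}\log g - O(1)$ one gets $\cosh r_0 - 1 \sim \frac{1}{2} g^{1/6}$, so $|I| \lesssim g^{5/6}$, and then $v \lesssim g^{5/6}$. Feeding $v \lesssim g^{5/6}$ into~\eqref{eq:vg} gives $\length(\Gamma_k) \leq (v + 2g - k)\frac{\ell}{2} \lesssim g \log g$ for all $k$, and the edge length is now $\ell/2 \sim \log g$; applying the Bollob\'as--Szemer\'edi--Thomason inequality $\sys(\Gamma_k) \leq 4\frac{\log(1+b_k)}{b_k}\length(\Gamma_k)$ with $b_k = 2g-k+1$ and using that $\sys(\Gamma_k) \geq \ell/2$ forces, for the loops to be genuinely short, $b_k$ not too small; for $b_k \gtrsim g^{1/6}$, i.e. $k \leq 2g - g^{1/6}$, we get $\length(\gamma_k) \lesssim \frac{\log g}{g^{1/6}}\cdot g\log g = g^{5/6}(\log g)^2$. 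This is essentially the claimed bound up to a $\log$ factor; a slightly sharper choice of $r_0$ and a cleaner estimate of $\length(\Gamma_1)$ (using $e - b \leq v - 1$ more carefully, or bounding $e$ directly by the packing geometry) should remove the extra logarithm and yield $C_1 g^{5/6}\log g$. The main obstacle is precisely this bookkeeping: getting the embedded-disk radius $r_0$ as large as $\sim \frac{1}{12}\log g$ while keeping the nerve $\Gamma$ a faithful model of $H_1(M)$ (one needs $4r_0 + 2\varepsilon < \frac{1}{2}\sys_\pi(M)$, and $\sys_\pi = \sys_H \geq \frac{4}{3}\log g - c$ barely permits $r_0$ up to $\frac{1}{6}\log g$), and then tracking the constants so that the final exponent is exactly $5/6$ and the logarithmic factor is exactly one power. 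The $g^{5/6}$ is the balance point: disk area $\sim g^{1/6}$ forces $|I| \sim g^{5/6}$, and $g^{5/6}$ is also where the graph systole estimate transitions, so both contributions match.
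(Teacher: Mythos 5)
You have landed on the same idea as the paper's proof — in the hyperbolic case, push the disk radius $r_0$ up to (just under) $\ell/8=\frac{1}{6}\log g-\frac{c}{8}$, so that each disk $D_i$ is an embedded hyperbolic disk of area $2\pi(\cosh r_0-1)\sim g^{1/6}$, whence the number of vertices satisfies $v\lesssim g^{5/6}$; the paper records this as a refined constant $C_0\simeq\frac{8}{\cosh(r_0)-1}\bigl(\ell+\frac1\ell\bigr)\simeq\log(g)/g^{1/6}$ and then simply invokes item~(1) of Theorem~\ref{theo:ell}, i.e.\ the bound on the \emph{longest} loop $\length(\alpha_{2g})\le C_0\,g\lesssim g^{5/6}\log g$. (Your ``$r_0=\frac{1}{12}\log g$ gives area $\sim g^{1/6}$'' is an arithmetic slip, corrected implicitly by your later parenthetical.) However, your final bookkeeping leaves a genuine gap at exactly the loops the statement is about: you replace the per-$k$ estimate by the uniform bound $\length(\Gamma_k)\lesssim g\log g$ and then multiply by $4\log(1+b_k)/b_k$, which (a) gives no usable bound at all when $b_k=2g-k+1\lesssim g^{1/6}$, i.e.\ for the last (longest) $\sim g^{1/6}$ loops of the basis, which the corollary must also control, and (b) even in the range $b_k\gtrsim g^{1/6}$ only yields $g^{5/6}(\log g)^2$, one logarithm worse than claimed.

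Moreover, the remedies you point to cannot close this gap: no ``sharper choice of $r_0$'' is available, since the nerve construction requires $4r_0+2\eps<\ell/2$, capping the disk area at $\sim g^{1/6}$ and hence forcing $v\gtrsim g^{5/6}$; and $\length(\Gamma_1)$ cannot be estimated below $\sim g\log g$, because $b_1(\Gamma_1)=2g$ forces at least $2g$ edges of length $\ell/2$ each. The correct completion is elementary and already contained in \eqref{eq:vg}: keep $\length(\Gamma_k)\le(v+2g-k)\frac{\ell}{2}\le(v+b_k)\frac{\ell}{2}$ for each $k$, so that
$\length(\gamma_k)\le 4\,\frac{\log(1+b_k)}{b_k}(v+b_k)\frac{\ell}{2}\le 2\ell\bigl(v\log 2+\log(2g+1)\bigr)\lesssim g^{5/6}\log g$
uniformly in $k$, using $\frac{\log(1+b)}{b}\le\log 2$ for $b\ge 1$; the worst case is $b_k=O(1)$, where the bound is $\sim \ell v\sim g^{5/6}\log g$, exactly the claim. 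This is what the paper's refined $C_0$ combined with item~(1) expresses in one line. One caveat your write-up shares with the paper's: the embedded-disk area formula uses the \emph{homotopical} systole, whereas only the homological systole is assumed large, so short separating geodesics have to be disposed of first (as in Step~1 of the proof of Theorem~\ref{theo:ell}) before the hyperbolic area estimate can be applied.
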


\begin{proof}
The estimate follows from the bound on the length of~$\gamma_{2g}$ in Theorem~\ref{theo:ell}.\eqref{item1}.
Simply use the formula for~$C_0$ given in Remark~\ref{rem:C0} and plug in $\ell = \frac{4}{3} \log(g)-c$ with $r_0$ arbitrarily close to~$\ell/8$.
This yields $C_0 \simeq \frac{\log(g)}{g^{1/6}}$.
\end{proof}

In the hyperbolic case, one can also show that the minimum degree of the graph~$\Gamma$ introduced in the proof of Theorem~\ref{theo:ell} is at least three and that its maximum degree is bounded from above by a constant depending only on~$\ell$ (by the Bishop-Gromov inequality).
Therefore, using the result of~\cite{BV96} in the proof of Theorem~\ref{theo:ell} instead of the systolic inequality on graphs of~\cite{BT97,BS02}, one can prove the following result on the existence of short homologically independent disjoint cycles. 

\begin{theorem} 
On every closed hyperbolic orientable surface of genus~$g$ with homological systole at least~$\ell$, there exist at least $\displaystyle \left[C_{2} \, \frac{g}{\log(g+1)} \right]+1$ homologically independent disjoint loops~$\alpha_{k}$ such that
$$
\length(\alpha_{k}) \leq C_{3} \, \log (g+1),
$$
where the fonctions $C_2=C_2(\ell)$ and $C_3=C_3(\ell)$ only depend on~$\ell$.
\end{theorem}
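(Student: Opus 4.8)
The plan is to adapt the proof of Theorem~\ref{theo:ell} verbatim, replacing only the final combinatorial input. Recall that in that proof we built a graph $\Gamma$ (the $1$-skeleton of the nerve of a covering by metric disks $2D_i+\varepsilon$), pushed it forward by a distance-nonincreasing map $\varphi:\Gamma\to M$ inducing an epimorphism on homology, pruned it to a subgraph $\Gamma_1$ with $\varphi_*:H_1(\Gamma_1;\kk)\to H_1(M;\kk)$ an isomorphism (so $b_1(\Gamma_1)=2g$), and controlled $\length(\Gamma_1)\le (v-1+2g)\,\tfrac{\ell}{2}$ with $v=|I|\le 2^{13}(1+\ell^{-2})g$. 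In the hyperbolic setting one gains two extra structural facts about $\Gamma$: by the Bishop--Gromov inequality (and the lower area bound on hyperbolic disks) the degree of every vertex is bounded above by a constant $\Delta=\Delta(\ell)$, and since the small disks $D_i$ are a maximal packing each disk $2D_i$ meets at least two others, so after a harmless cleanup the minimum degree is at least three. These are exactly the hypotheses under which one can invoke, instead of the Bollob\'as--Szemer\'edi--Thomason bound, the result of~\cite{BV96} producing \emph{many disjoint} short cycles in a graph of bounded degree.

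First I would run Steps~1 and~2 of the proof of Theorem~\ref{theo:ell} unchanged, so that we may assume $\sys_\pi(M)\ge\ell$, the area is $\le 2^4(1+\ell^2)g$, and we have the graph $\Gamma_1$ with $\varphi_*$ an isomorphism in $\kk$-homology and $\length(\Gamma_1)\le C(\ell)\,g$ for an explicit $C(\ell)$. Next I would verify the two degree bounds on $\Gamma$: the upper bound $\Delta(\ell)$ follows because all disks $2D_i+\varepsilon$ have radius $<\ell/2$, are therefore embedded (no noncontractible loop fits), lie in a ball of controlled radius around $x_i$, and Bishop--Gromov bounds how many disjoint $r_0$-disks can have centers in that ball; the lower bound $\ge 3$ (or at least $\ge 2$, which after suppressing degree-$2$ vertices by merging edges gives $\ge 3$) follows from maximality of the packing $\{D_i\}$. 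Note that passing to $\Gamma_1$ only deletes edges, so it keeps the degree upper bound; one needs a short argument that it may be taken with minimum degree $\ge 3$ as well (delete degree-$\le1$ vertices, which carry no homology, and suppress degree-$2$ vertices).

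Then I would apply~\cite{BV96}: a graph with first Betti number $b$, maximum degree $\le\Delta$ and minimum degree $\ge 3$ contains at least $\sim c(\Delta)\,b/\log b$ pairwise disjoint cycles each of length $\le c'(\Delta)\log b$ (with respect to the combinatorial metric, hence $\le c'(\Delta)\,\tfrac{\ell}{2}\log b$ in the metric where each edge has length $\ell/2$). Apply this to $\Gamma_1$, for which $b=2g$: we get $\ge \bigl[C_2(\ell)\tfrac{g}{\log(g+1)}\bigr]+1$ disjoint cycles $\gamma_k$ of length $\le C_3(\ell)\log(g+1)$. Since disjoint cycles in a graph are automatically homologically independent, and $\varphi_*$ is a homology isomorphism that is distance nonincreasing, the images $\alpha_k=\varphi(\gamma_k)$ are homologically independent loops on $M$ with $\length(\alpha_k)\le C_3(\ell)\log(g+1)$; a further transversality perturbation makes them disjoint simple loops on $M$ (their preimages are already disjoint, so generic small pushoffs stay disjoint, or one argues directly that disjointness in $\Gamma_1$ plus injectivity of $\varphi$ on the relevant part suffices).

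The main obstacle is the bookkeeping around the degree hypotheses: the result of~\cite{BV96} needs a genuine bound on the maximum degree of $\Gamma$, and getting this requires the hyperbolicity (via Bishop--Gromov) in an essential way — in a general Riemannian metric the nerve can have arbitrarily large degree, which is precisely why this sharper statement is confined to the hyperbolic case. A secondary technical point is ensuring that the pruning to $\Gamma_1$, and the subsequent suppression of low-degree vertices, does not destroy either the isomorphism $\varphi_*$ or the minimum-degree condition; this is routine but must be stated carefully. Everything else is a direct transcription of the proof of Theorem~\ref{theo:ell}.
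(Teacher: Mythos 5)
Your overall strategy is the one the paper intends: keep the construction of Theorem~\ref{theo:ell}, use hyperbolicity to bound the degrees of the nerve graph (minimum degree three, maximum degree $\Delta(\ell)$ via Bishop--Gromov), and replace the Bollob\'as--Szemer\'edi--Thomason girth bound by the result of [BV96] producing many disjoint short cycles. However, there is a genuine gap at the very last step, exactly where the word \emph{disjoint} in the statement carries its content. The map $\varphi:\Gamma \to M$ sends edges to geodesic segments between the centers $x_i$ and is not injective: segments corresponding to different edges may cross transversally on the surface. Hence vertex-disjoint cycles of $\Gamma_1$ need not have disjoint images, and both of your proposed fixes fail: a transverse crossing of two arcs is stable under small perturbations, so ``generic small pushoffs stay disjoint'' is false whenever the images actually cross, and no injectivity of $\varphi$ ``on the relevant part'' has been (or can in general be) established. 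To get loops that are disjoint \emph{on $M$} one needs the graph to be embedded in the surface --- for instance the graph dual to the Voronoi decomposition of the maximal packing, realized by arcs inside the cells as in Lemma~\ref{lem:path}, together with an argument that its complementary faces are disks so that it still surjects onto $\pi_1$ --- or else a surgery argument trading crossings for controlled length and preserved independence; neither appears in your proposal.

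A secondary problem is the order of operations. You propose to run Steps 1 and 2 of Theorem~\ref{theo:ell} ``unchanged'' and then invoke Bishop--Gromov, but Step 1 (capping off along short null-homologous geodesics) and especially Step 2 (the conformal regularization of Lemma~\ref{lem:regmet}) destroy hyperbolicity, and a conformal change does not preserve any curvature lower bound; so the hypothesis you need for the maximum-degree bound is no longer available for the surface on which $\Gamma$ is actually built. In the hyperbolic case one should avoid Lemma~\ref{lem:regmet} altogether (hyperbolic $r_0$-disks already have definite area once the injectivity radius is controlled) and handle the homological-versus-homotopical systole reduction with more care than a verbatim transcription. Finally, suppressing degree-two vertices merges edges of length $\ell/2$ into possibly much longer edges, so a cycle with $O(\log g)$ edges is then no longer automatically of metric length $O(\log g)$; the minimum-degree-three property has to be established for the graph with its original edge lengths (as the paper asserts for $\Gamma$ itself in the hyperbolic case), not manufactured afterwards by suppression.
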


\forgotten

\section{Short loops and the Jacobian of hyperbolic surfaces}

This section is dedicated to generalizing the results of P. Buser and P. Sarnak~\cite{BS94} in the following way. We begin by extending the $\log(g)$ upper bound on the length of the shortest homological non-trivial loop to almost $g$ loops, and then use these bounds and the methods developed in~\cite{BS94} to obtain information on the geometry of Jacobians. 

\subsection{Short homologically independent loops on hyperbolic surfaces}
We begin by showing that one can extend the usual $\log(g)$ bound on the homological systole of a hyperbolic surface, to a set of almost $g$ homologically independent loops. In the case where the homological systole is bounded below by a constant, this is a consequence of Theorem~\ref{theo:ell}, so here we show how to deal with surfaces with small curves.
More precisely, our result is the following:

\begin{theorem} \label{theo:lng}
Let $\eta:\N \to \N$ be a function such that 
$$
\displaystyle \lambda := \sup_g \frac{\eta(g)}{g} < 1.
$$
Then there exists a constant~$C_\lambda=\frac{2^{17}}{1-\lambda}$ such that for every closed hyperbolic orientable surface~$M$ of genus~$g$ there are at least $\eta(g)$ homologically independent loops $\alpha_{1},\ldots,\alpha_{\eta(g)}$ which satisfy
\begin{equation} \label{eq:upper1}
\length(\alpha_{i}) \leq C_\lambda \, \log(g+1)
\end{equation}
for~every $i \in \{1,\ldots,\eta(g)\}$ and admit a collar of width at least
$$
w_0={1\over 2}\arcsinh(1)
$$
around each of them.
\end{theorem}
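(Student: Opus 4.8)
The strategy is to reduce Theorem~\ref{theo:lng} to Theorem~\ref{theo:ell} by surgery along short loops, exactly in the spirit of Step~1 of the proof of Theorem~\ref{theo:ell}, but now cutting along \emph{all} loops that are shorter than a suitable threshold rather than only the homologically trivial ones. Fix the threshold at $2w_0=\arcsinh(1)$ (the collar half-width is forced by the Collar Lemma: a simple closed geodesic of length $\le \arcsinh(1)$ has an embedded collar of half-width $w_0$, and geodesics this short are automatically simple and pairwise disjoint). Let $\beta_1,\dots,\beta_m$ be the simple closed geodesics on $M$ of length $<\arcsinh(1)$. These are disjoint, so cutting $M$ along all of them and capping each resulting boundary circle with a hyperbolic-type cap (or, following Step~1, with a round hemisphere of controlled area) produces a possibly disconnected surface $N$; since each $\beta_i$ is short, the total area added is $O(g)$, so $\area(N)\le C\,g$ for a universal $C$.

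**Controlling homology.** I would track what happens to $H_1$ under this surgery. Collapsing each $\beta_i$ to a point gives a surjection $H_1(M;\Z)\to H_1(N;\Z)$ whose kernel has rank $m$ (one class per curve cut), and $N$ has genus $g' = g-m'$ where $m'\le m$ counts the separating ones among the $\beta_i$, with total genus $g-m+(\text{number of nonseparating cuts})$ — in any case the first Betti number of $N$ is $2g-2m$ (each cut kills one generator and its dual is lost too, when the curve is nonseparating; when separating it removes a hyperbolic handle pairing). The point is simply that $b_1(N)\ge 2g - 2m$. Now apply Theorem~\ref{theo:ell} to each component of $N$ (which has homotopical, hence homological, systole $\ge\arcsinh(1)=:\ell$, a universal constant, and area $\le C g$): this yields a homology basis of $N$ by loops whose lengths obey the bound $C_0\frac{\log(b_1(N)-k+2)}{b_1(N)-k+1}\,g$. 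Pulling these loops back to $M$ (they avoid the caps, so they live in $M\setminus\bigcup\beta_i$) gives $b_1(N)\ge 2g-2m$ homologically independent loops on $M$; together with the $\beta_1,\dots,\beta_m$ themselves (which are short, length $<\arcsinh(1)\le C_\lambda\log(g+1)$, and homologically independent from the pulled-back loops since they were killed by the collapse) we recover a full family. The $\eta(g)$ shortest among all $2g$ of these are the $\alpha_i$ we want.

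**The length count.** This is the crux and the step I expect to be the main obstacle: extracting from the graded bound in Theorem~\ref{theo:ell} the conclusion that the $\eta(g)$-th shortest loop has length $\le C_\lambda\log(g+1)$. Order the pulled-back loops by length; the $k$-th one has length $\le C_0\,\frac{\log(2g'-k+2)}{2g'-k+1}\,g$ where $g'$ is the genus of the relevant component. Combined with the $m$ short $\beta_i$'s (all of length $<1$), the $j$-th shortest loop overall, for $j\le \eta(g)$, has length at most roughly $C_0\,\frac{\log(2g-j+2)}{2g-j+1}\,g$; since $j\le\eta(g)\le\lambda g$, we get $2g-j+1\ge (2-\lambda)g$, so this is $\le C_0\,\frac{\log((2-\lambda)g+2)}{(2-\lambda)g}\,g \le \frac{C_0}{2-\lambda}\log(g+1) \cdot(\text{const})$, absorbing everything into $C_\lambda = \frac{2^{17}}{1-\lambda}$ after bookkeeping (the factor $1-\lambda$ rather than $2-\lambda$ gives slack for the area-growth constant $C$ hidden in $\area(N)\le Cg$ and for possible loss when components are small). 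One delicate point to check carefully is that when $N$ has many tiny components one still has enough independent classes of controlled length — but since Theorem~\ref{theo:ell}'s bound is per-component and the total is what we sum, and since a component of genus $h$ contributes $2h$ loops all of length $\le C_0\log(h+1)\cdot$something that is dominated by the ambient bound, this causes no trouble. Finally, the collar statement is immediate: the loops of length $<\arcsinh(1)$ have embedded collars of half-width $w_0$ by the Collar Lemma, and one checks the pulled-back loops from Theorem~\ref{theo:ell} can likewise be taken with such collars, or simply that among the $\eta(g)$ shortest loops the relevant ones still admit width-$w_0$ collars after a harmless shortening. I would present the argument in that order: (1) cut along short geodesics and cap off; (2) identify the effect on $H_1$ and on area; (3) invoke Theorem~\ref{theo:ell} componentwise and pull back; (4) merge with the short $\beta_i$ and run the length count; (5) deduce collars.
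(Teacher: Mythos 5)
Your overall strategy (cut along the short geodesics, invoke Theorem~\ref{theo:ell}, merge the two families) is the same as the paper's, but the two places where your sketch is vaguest are exactly where the real work lies, and as written both are gaps. First, the length count. After capping with hemispheres you must apply Theorem~\ref{theo:ell} \emph{componentwise}, and the bound it gives on a component is graded by that component's own genus and area: a component of genus $h$ contributes $2h$ loops, but only roughly the first half of them satisfy a $\log$ bound, and the bound scales with the component's area, which need not be comparable to $4\pi(h-1)$ (a small-genus component can carry a large share of the area when it has many boundary circles). So your claimed merged estimate, that the $j$-th shortest loop overall is $\lesssim \frac{\log(2g-j+2)}{2g-j+1}\,g$, does not follow from the componentwise bounds, and your remark that a genus-$h$ component contributes $2h$ loops ``all of length $\leq C_0\log(h+1)\cdot$ something'' is false (the longest ones are only bounded linearly in the component's area plus genus). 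One can repair this in the hyperbolic setting by a per-component count using $\area = 2\pi(2g_i-2+b_i)$ and $\sum_i b_i\le 6g$, but you never carry this out; the paper avoids the problem altogether by not capping: it normalizes the boundary lengths (via the deformation of~\cite{parlier}) and glues \emph{fat tori} (Lemma~\ref{lem:fat}) to get a single closed hyperbolic surface $S$ of genus $g+n$ with homological systole $\geq 2\arcsinh(1)$, applies Theorem~\ref{theo:ell} once to $S$ taking the first $\eta(g)+3n$ curves of a minimal basis, and uses Lemma~\ref{lem:cut} plus the fact that at most two independent classes can hide in each torus to retain $\eta(g)-n$ usable curves.

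Second, the collar statement is not ``immediate.'' The Collar Lemma gives width $w_0$ only for the short curves $\beta_i$; the pulled-back curves have geodesic representatives of length $\geq \arcsinh(1)$, and a long simple geodesic can a priori have an arbitrarily thin maximal collar even when the systole is bounded below. The paper needs a dedicated argument (Step~2 of its proof): take the long curves to lie in a \emph{minimal} homology basis of the cut surface $N$, so that by Lemma~\ref{lem:simple} a thin collar produces a loop $c\cup d_1$ of length $<2\arcsinh(1)$, whose geodesic $\delta_1$ must be homologous to a combination of the $\alpha_i$ by maximality of that family, hence disjoint from $\gamma$; then $\gamma,\delta_1,\delta_2$ bound a pair of pants and the pentagon formula yields $\length(\delta_1)+\length(\delta_2)\le\length(\gamma)$, contradicting minimality. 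Nothing in your sketch substitutes for this. Two smaller points: your family $\beta_1,\dots,\beta_m$ of \emph{all} short geodesics need not be homologically independent (you must pass to a maximal independent subfamily, as the paper does with $\alpha_1,\dots,\alpha_n$), and your Betti-number bookkeeping ($b_1(N)=2g-2m$, ``separating removes a handle pairing'') is off --- the correct count is $b_1(N)=2(g-n)$ with $n$ the rank of the span of the cut curves --- though this particular slip is repairable and does not by itself break the count.
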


The previous theorem applies with $\eta(g)=[\lambda g]$, where $\lambda \in (0,1)$.

\begin{remark}\label{rem:ln}
The non-orientable version of this theorem is the following: if $\eta:\N \to \N$ is a function such that 
$$
\displaystyle \lambda := \sup_g \frac{\eta(g)}{g} < \frac{1}{2},
$$
then there exists a constant~$c'$ such that for every closed hyperbolic non-orientable surface~$M$ of genus~$g$ (homeomorphic to the sum of $g$ copies of the projective plane) there are at least $\eta(g)$ homologically independent loops $\alpha_{1},\ldots,\alpha_{\eta(g)}$ which satisfy
$$
\length(\alpha_{i}) \leq \frac{C}{1-2\lambda} \, \log(g+1)
$$
for~every $i \in \{1,\ldots,\eta(g)\}$.
\end{remark}

The following lemma about minimal homology bases, \cf~Definition~\ref{def:min}, is proved in \cite[Section~5]{gro83}, see also~\cite[Lemma~2.2]{guth}. Note it applies to Riemannian surfaces, not only hyperbolic ones. 

\begin{lemma} \label{lem:simple}
Let $M$ be a compact Riemannian surface with geodesic boundary components.
Then every minimal homology basis~$(\gamma_i)_i$ of $M$ is formed of simple closed geodesics such that for every~$i$, the distance between every pair of points of~$\gamma_i$ is equal to the length of the shortest arc of~$\gamma_i$ between these two points.
Furthermore, two different loops of~$(\gamma_i)_i$ intersect each other at most once.
\end{lemma}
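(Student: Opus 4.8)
The plan is to prove Lemma~\ref{lem:simple} by a sequence of local surgery arguments, each showing that a minimal homology basis cannot contain a curve failing the desired property, since such a curve could be replaced (together with its homology-mates) by strictly shorter representatives without leaving the span. Throughout I would use the fact that cutting a loop into two arcs and reconnecting along a common subarc only produces loops whose smoothings are no longer — the same mechanism already used in Lemma~\ref{lem:cut} — together with the observation that a minimal homology basis is by definition length-minimizing over all bases (condition (2')).

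\textbf{Step 1: each $\gamma_i$ is a geodesic.} If some $\gamma_i$ were not a geodesic, we could shorten it by a length-decreasing homotopy to a geodesic representative (or to a point, which is impossible since its class is nontrivial). Replacing $\gamma_i$ by this shorter loop in the basis keeps the homology classes a basis and strictly decreases the total length, contradicting minimality. On a surface with geodesic boundary one checks the shortening homotopy stays in $M$ and does not run onto the boundary (the boundary being geodesic, a length-minimizer in a free homotopy class is either an interior geodesic or a boundary component; a boundary component is homologically constrained but the argument still yields a simple closed geodesic representative).

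\textbf{Step 2: the distance-minimizing (``convexity'') property.} Suppose for some $i$ there are two points $p,q$ on $\gamma_i$ such that the shorter of the two arcs of $\gamma_i$ between them, call it $a$, is strictly longer than a minimizing geodesic arc $d$ between $p$ and $q$ in $M$. Write $\gamma_i = a \cup a'$ with $a'$ the complementary arc, so $\length(a) \le \length(a')$. Then $a \cup d$ and $a' \cup d$ smooth to loops $\delta_1,\delta_2$ with $\length(\delta_1) < \length(a) + \length(d) \le \length(\gamma_i)$ — wait, more carefully, $\length(\delta_1) \le \length(a)+\length(d)$ and $\length(\delta_2)\le\length(a')+\length(d)$, and since $[\gamma_i] = [\delta_1] \pm [\delta_2]$ while $\length(\delta_1)+\length(\delta_2) \le \length(\gamma_i) + 2\length(d) < \length(\gamma_i) + 2\length(a) \le 2\length(\gamma_i)$, at least one of $\delta_1,\delta_2$ is strictly shorter than $\gamma_i$; replacing $\gamma_i$ in the basis by whichever of $\delta_1,\delta_2$ keeps the classes a basis (at least one choice does, since $[\gamma_i]$ is a $\kk$- or $\Z$-combination of the two and they span the same subspace modulo the rest) contradicts minimality. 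This also forces each $\gamma_i$ to be \emph{simple}: a self-intersection at a point $p$ (take $p=q$) produces two sub-loops through $p$ whose classes sum to $[\gamma_i]$ and which are each no longer, and a generic perturbation makes one strictly shorter. The main obstacle in this step is bookkeeping the "at least one replacement preserves the basis'' claim: one argues that if neither $\delta_1$ nor $\delta_2$ could replace $\gamma_i$, then both $[\delta_1]$ and $[\delta_2]$ lie in the span of the other $\gamma_j$, but their sum is $[\gamma_i]$, contradicting independence.

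\textbf{Step 3: two distinct loops meet at most once.} Suppose $\gamma_i$ and $\gamma_j$ ($i\neq j$) cross at two points $p,q$. Let $b$ be a subarc of $\gamma_i$ from $p$ to $q$ and $c$ a subarc of $\gamma_j$ from $p$ to $q$, chosen so that $\length(b) \le \frac12\length(\gamma_i)$ and $\length(c)\le\frac12\length(\gamma_j)$. Using the convexity property from Step~2, neither $b$ nor $c$ is strictly longer than the minimizing arc $d$ between $p$ and $q$, hence $\length(b),\length(c)\le\length(d)$; but this is only a weak inequality, so instead I would argue directly: swapping the arcs $b$ and $c$ between $\gamma_i$ and $\gamma_j$ produces two new cycles $\gamma_i'=(\gamma_i\setminus b)\cup c$ and $\gamma_j'=(\gamma_j\setminus c)\cup b$ with $\length(\gamma_i')+\length(\gamma_j') = \length(\gamma_i)+\length(\gamma_j)$ and with $\{[\gamma_i'],[\gamma_j']\}$ spanning the same subspace as $\{[\gamma_i],[\gamma_j]\}$ modulo the rest (indeed $[\gamma_i']+[\gamma_j'] = [\gamma_i]+[\gamma_j]$ with suitable orientations, and the pair is independent). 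After this swap the curves $\gamma_i',\gamma_j'$ are no longer smooth at $p$ and $q$; smoothing strictly decreases length unless $b$ and $c$ are tangent, and by a small homotopy one gains a definite decrease, contradicting minimality of the original basis (since the swapped-and-smoothed system is still a basis of the same total length minus a positive amount). One must check the swapped system is still a \emph{basis}, not merely that the relevant pair is independent; this follows because the elementary transformation $([\gamma_i],[\gamma_j])\mapsto([\gamma_i'],[\gamma_j'])$ is (up to sign) an automorphism of the span of those two classes fixing everything else. Here the delicate point — and the step I expect to be the real obstacle — is ruling out the degenerate case where smoothing produces \emph{no} length gain because the two curves are mutually tangent along the swapped arcs; one handles this by noting that two distinct simple closed geodesics cannot be tangent (a tangency would force them to coincide by uniqueness of geodesics), so after Steps~1--2 the tangency case does not arise and smoothing always yields a strict decrease. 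I would remark that the full details are in \cite[Section~5]{gro83} and \cite[Lemma~2.2]{guth}, which is why the statement is quoted rather than reproved at length.
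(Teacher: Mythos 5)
Your Steps 1 and 2 are essentially the standard exchange argument (the same mechanism as Lemma~\ref{lem:cut}), but Step 3 has a genuine gap, and it is not the one you flag. The assertion that the arc swap $(\gamma_i,\gamma_j)\mapsto(\gamma_i',\gamma_j')=(b'\cup c,\,b\cup c')$ is ``(up to sign) an automorphism of the span of those two classes fixing everything else'' is unjustified and in general false: the classes of the swapped cycles are $[b']+[c]$ and $[b]+[c']$, which are built from the \emph{arcs} and need not lie in $\Z[\gamma_i]+\Z[\gamma_j]$ at all; the only identity you actually have is $[\gamma_i']+[\gamma_j']=[\gamma_i]+[\gamma_j]$, which does not preserve spans. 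Genuinely new classes can appear after the swap, the swapped family can drop rank (for instance, two curves crossing twice with the same sign can resolve into two cycles one of which becomes redundant with the remaining $\gamma_k$'s, and with opposite-sign crossings one resolution can even be nullhomologous), and then the strict length decrease after smoothing contradicts nothing, because the swapped system is not a competitor in Definition~\ref{def:min}. The tangency issue you single out is a non-issue (distinct geodesics always cross transversally, so corners and a strict gain are automatic); the homological bookkeeping is the real delicate point. The standard repair pairs the hybrid loops differently: with $p,q$ the two crossing points, $b\subset\gamma_i$ and $c,c'\subset\gamma_j$ the arcs, set $A=b\cup\bar c$ and $B=b\cup c'$, so that $[B]-[A]=[\gamma_j]$ and hence at least one of $A,B$ is homologically independent of all the \emph{other} basis classes; then the minimizing property already proved in Step 2 forces $\length(b)=\length(c)=d(p,q)$, whence $\length(A)\le\length(\gamma_i)$ and $\length(B)=\length(\gamma_j)$, and smoothing the corner at $p$ gives a representative strictly shorter than the longer of the two curves, contradicting minimality.

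A secondary imprecision affects Steps 2 and 3 as you run them through condition (2'): replacing a basis element by a loop whose class is merely independent of the remaining classes need not produce a basis of $H_1(M;\Z)$ (the change of basis need not have determinant $\pm1$), so (2') yields no contradiction. The clean route is condition (2): the new independent loop, together with all basis loops strictly shorter than the offending curve, is a homologically independent family whose cardinality exceeds the number of basis loops of length below that threshold. With these two repairs your outline does reduce to the argument of \cite[Section~5]{gro83} and \cite[Lemma~2.2]{guth}, which is exactly what the paper cites in place of a proof.
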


\begin{remark} \label{rem:straight}
The homotopical systolic loops of~$M$ also satisfy the conclusion of Lemma~\ref{lem:simple}.
\end{remark}

\begin{remark}\label{rem:diameter}
In particular, Lemma~\ref{lem:simple} tells us that one can always find a homology basis of length at most twice the diameter of the surface. In \cite{GPY}, 
this is shown to be false for lengths of pants decompositions. Indeed, a pants decomposition of a ``random" pants decomposition (where for instance random is taken in the sense of R.~Brooks and E.~Makover) there is a curve of length at least $g^{\frac{1}{6}-\varepsilon}$ and the diameter behaves roughly like $\sim \log(g)$. 
\end{remark}

In the next lemma, we construct particular one-holed hyperbolic tori, which we call {\it fat tori} for future reference. 

\begin{lemma}\label{lem:fat}
Let $\varepsilon \in (0,2 \arcsinh(1)]$.
There exists a hyperbolic one-holed torus~$T$ such that
\begin{enumerate}
\item the length of its boundary~$\partial T$ is equal to~$\varepsilon$;
\item the length of every homotopically nontrivial loop of~$T$ is at least~$\varepsilon$; \label{item22}
\item every arc with endpoints in~$\partial T$ representing a nontrivial class in~$\pi_{1}(T,\partial T)$ is longer than any homologically nontrivial loop of~$T$.
\end{enumerate}
\end{lemma}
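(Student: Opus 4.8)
The plan is to produce the torus $T$ explicitly inside the Fenchel--Nielsen parametrization of one-holed hyperbolic tori and then verify the three properties by hand. Recall that a one-holed hyperbolic torus is determined by the length $\varepsilon$ of its boundary together with the pair $(a,t)$, where $a$ is the length of an interior simple closed geodesic and $t$ the twist along it; alternatively one can build $T$ by gluing two isometric right-angled hexagons, or as a quotient of a fundamental hexagon in $\Hyp$. I would fix $\varepsilon\in(0,2\arcsinh(1)]$, set the boundary length equal to $\varepsilon$, and choose the \emph{symmetric} torus: take two interior simple closed geodesics $\delta_1,\delta_2$ meeting once, impose $\length(\delta_1)=\length(\delta_2)$ and zero twist (the maximally symmetric point of this slice of Teichm\"uller space). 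The single remaining parameter is then pinned down by the boundary length, so $T=T(\varepsilon)$ is well defined, and property~(1) holds by construction.

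For property~(2), I would use the standard description of lengths of simple closed geodesics on a one-holed torus: every homotopically nontrivial simple loop is one of the curves in the modular orbit of $\delta_1$, and there is a monotonicity/convexity relation (via the trace identities for $\PSL(2,\R)$, or Okai-type collar estimates) bounding each such length from below in terms of $\length(\delta_1)$ and $\varepsilon$. Concretely, in the symmetric torus the shortest interior simple closed geodesic has length $a(\varepsilon)$ satisfying an explicit equation coming from the right-angled hexagon (something like $\cosh(a/2)$ expressed through $\cosh(\varepsilon/4)$); one checks that for $\varepsilon\le 2\arcsinh(1)$ this forces $a(\varepsilon)\ge\varepsilon$, and that every other interior simple closed geodesic is longer. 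Since on a hyperbolic surface with geodesic boundary every homotopically nontrivial loop is at least as long as the shortest \emph{geodesic} representative, and the boundary itself has length $\varepsilon$, this gives $\sys_\pi(T)=\varepsilon$, which is~(2). The non-simple loops are automatically longer and need no separate argument once the simple ones are controlled, because any shortest representative of any class is simple here (it is a figure-eight only in degenerate cases excluded by the length bound).

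Property~(3) is the analogue for relative classes: an arc $c$ with endpoints on $\partial T$ that is nontrivial in $\pi_1(T,\partial T)$ must, together with a subarc of $\partial T$, bound a homotopically nontrivial loop, and by an orthogeodesic/collar argument its length is at least the width of a collar around such a loop plus a term growing with $\varepsilon$; for $\varepsilon\le 2\arcsinh(1)$ the embedded collar lemma gives a collar of half-width $\arcsinh(1/\sinh(\varepsilon/2))\ge \arcsinh(1)$, and $2\arcsinh(1)$ already exceeds $\varepsilon\ge\sys_H(T)$, so $\length(c)$ dominates the length of any homologically nontrivial loop. I expect this last point to be the main obstacle: making the inequality ``every essential $\partial T$-arc is longer than every homologically nontrivial loop'' uniform over the whole range $\varepsilon\in(0,2\arcsinh(1)]$ requires a clean orthogeodesic lower bound rather than a soft argument, so the work is in choosing the symmetric torus (so that the shortest essential arc is itself realized symmetrically and its length is computable from a right-angled pentagon/hexagon) and then checking the resulting one-variable inequality. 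Everything else reduces to hyperbolic trigonometry in right-angled polygons and the collar lemma.
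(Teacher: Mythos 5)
Your construction is the same as the paper's: the symmetric (``square'') one-holed torus of boundary length $\varepsilon$ glued from four right-angled pentagons with a side of length $\varepsilon/4$, and properties (1) and (2) can be verified there essentially along your lines, with two points to tighten. The claim ``any shortest representative of any class is simple'' is false as stated (a non-simple free homotopy class has a non-simple geodesic representative); what you need is the standard fact that a homotopical systolic loop is simple, which does suffice. Also, ``every other interior simple closed geodesic is longer'' should be argued rather than left to a vague monotonicity principle; the paper does it directly: any essential non-boundary-parallel simple closed curve has nonzero intersection number with one of the two distinguished geodesics $\gamma_1,\gamma_2$ of length $2a$, where $a=\arcsinh\sqrt{\cosh(\varepsilon/4)}$, and so must cross $T\setminus\gamma_i$, whose two boundary copies of $\gamma_i$ are at distance at least $2a>\varepsilon$.

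The genuine gap is in property (3). Your quantitative step rests on ``$2\arcsinh(1)$ already exceeds $\varepsilon\ge\sys_H(T)$'', which is false: $\partial T$ is nullhomologous in $T$, so the homological systole is realized by the interior curves and equals $2a=2\arcsinh\sqrt{\cosh(\varepsilon/4)}>2\arcsinh(1)\ge\varepsilon$. Hence the soft collar bound you invoke (an essential arc must leave and re-enter the boundary collar of half-width $\arcsinh(1/\sinh(\varepsilon/2))\ge\arcsinh(1)$, so has length at least about $2\arcsinh(1)$) does not dominate the homologically nontrivial loops, which are already longer than $2\arcsinh(1)$. What is required — and what you explicitly defer as ``the main obstacle'' — is exactly the sharp pentagon computation the paper performs: the distance $h$ from the boundary circle to each side of the square satisfies $\sinh h\,\sinh(\varepsilon/2)=\cosh a$, every arc nontrivial in $\pi_1(T,\partial T)$ has length at least $2h$, and since $\varepsilon\le 2\arcsinh(1)$ gives $\sinh(\varepsilon/2)\le 1$, one gets $\sinh h\ge\cosh a>\sinh a$, i.e.\ $2h>2a$. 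Without carrying out this computation the key inequality of (3) — the very point of the lemma — is not established, so the proposal is incomplete there.
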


\begin{proof}
We construct $T$ as follows. We begin by constructing, for any $\varepsilon \in (0,2 \arcsinh(1)]$, the unique right-angled pentagon $P$ with one side of length $\frac{\varepsilon}{4}$, and the two sides not adjacent to this side of equal length, say $a$. By the pentagon formula,
$$
a = \arcsinh\sqrt{\cosh\frac{\varepsilon}{4}}.
$$
A simple calculation shows that $2a> \varepsilon$ for $\varepsilon \leq 2 \arcsinh (1)$.

For future reference, denote by~$h$ the length of the two edges of~$P$ adjacent to the side of length~$\frac{\varepsilon}{4}$.
Now, we glue four copies of $P$, along the edges of length $h$ to obtain a square with a hole, \cf~Figure~\ref{fig0}.

\begin{figure}[h]
\leavevmode 
\SetLabels
\L(.21*.65) $a$\\
\L(.27*.8) $a$\\
\L(.36*.67) $h$\\
\L(.255*.42) $h$\\
\L(.31*.55) $\frac{\varepsilon}{4}$\\
\endSetLabels
\begin{center}
\AffixLabels{\centerline{\epsfig{file =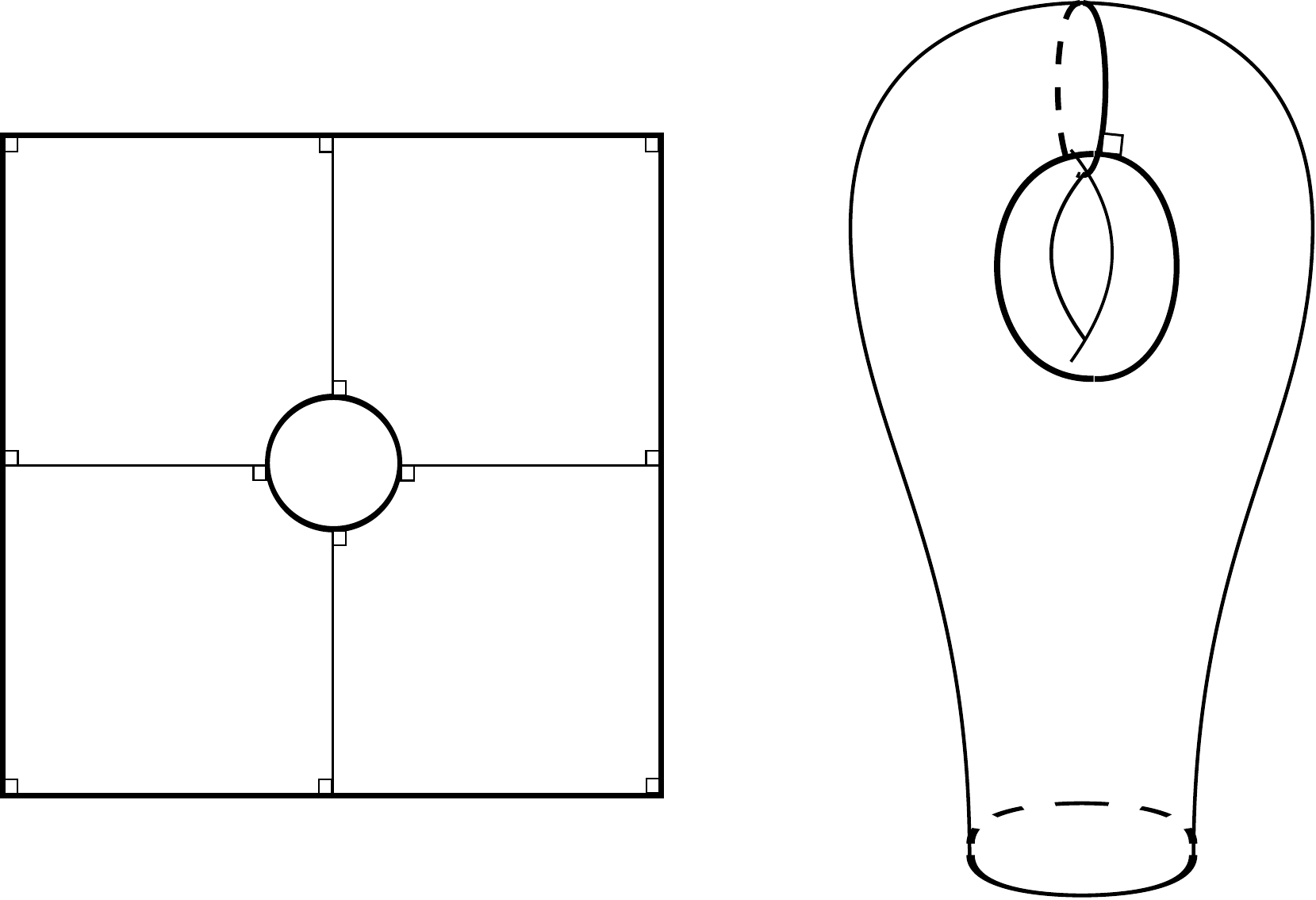,width=10cm,angle=0}}}
\end{center}
\caption{The construction of a fat torus} \label{fig0}
\end{figure}

We glue the opposite sides of this square to obtain a one-holed torus $T$ with boundary length $\varepsilon$. Note that the sides of the square project onto two simple closed geodesics $\gamma_1$ and~$\gamma_2$ of length~$2a$.
The distance between the two connected boundary components of $T \setminus \{\gamma_i\}$ arising from~$\gamma_i$ is at least~$2a$.
Therefore, as every noncontractible simple loop of~$T$ not homotopic to the boundary~$\partial T$ has a nonzero intersection number with $\gamma_1$ or~$\gamma_2$, its length is at least $2a>\varepsilon$.
We immediately deduce the point~\eqref{item22}.
This also shows that $\gamma_1$ and~$\gamma_2$ form a minimal homology basis of~$T$.
In particular, the homological systole of~$T$ is equal to~$2a$.

Now, in the one-holed square, the distance between the boundary circle of length~$\varepsilon$ and each of the sides of the square is clearly equal to~$h$.
Thus, the arcs of~$T$ with endpoints in~$\partial T$ homotopically nontrivial in the free homotopy class of arcs with endpoints lying on~$\partial T$ are of length at least~$2h$.
By the pentagon formula, 
$$
\sinh h \, \sinh \frac{\varepsilon}{2} = \cosh a.
$$
As $\varepsilon \leq 2 \arcsinh(1)$, we conclude that $2h > 2a$.
This proves the lemma.
\end{proof}



\forget
Let $T$ be the hyperbolic pair of pants with three boundary components of length~$\varepsilon$.
The three minimizing arcs, of length~$a$, joining the boundary components of the punctured torus decompose~$T$ into two isometric right-angled hexagons.
Each of these hexagons can be further decomposed into two isometric right-angled pentagons along a minimizing arc, of length~$b$, joining two opposite sides.
The lengths of the sides of these pentagons are $\varepsilon/4$, $a$, $\varepsilon/2$, $a/2$ and $b$ (in cyclic order).
By the pentagon formula, \cf~\cite{bus92}, we have
\begin{eqnarray*}
\cosh a & = & \coth(\varepsilon/4) \, \coth(\varepsilon/4) \\
\cosh b & = & \sinh(a) \, \sinh(\varepsilon/2).
\end{eqnarray*}
Since $\varepsilon \leq 2 \arcsinh(1)$, we derive, after some computation, that $\varepsilon \leq a \leq 2b$, which yields the desired result.
\forgotten

We can now proceed to the main proof of the section.

\begin{proof}[Proof of Theorem \ref{theo:lng}] \mbox{ }
 
\medskip

\noindent{\it Step 1.} 
We know this theorem to be true if the homological systole $\sys_H(M)$ is at least $2 \arcsinh(1)$ so suppose that this does not hold.\\

\noindent Consider $\alpha_1,\ldots,\alpha_{n}$ the set of homologically independent closed geodesics of $M$ of length less than $2 \arcsinh(1)$.
Note that by the collar lemma, these curves are simple and disjoint, and there is a collar of width~$1>w_0$ around each of them. \\

\noindent If $n\geq \eta(g)$ then the theorem is correct with $C_\lambda = 2 \arcsinh(1)/\log(2)$ so let us suppose that $n < \eta(g)$.
Let us consider $N= M \setminus \{\alpha_1,\ldots,\alpha_n\}$, a surface of signature $(g-n,2n)$.
The homological systole of~$N$ is at least $2 \arcsinh(1)$.
Using \cite{parlier}, we can now deform $N$ into a new hyperbolic surface~$N'$ which satisfies the following properties:

\begin{enumerate}
\item the boundary components of $N'$ are geodesic of length exactly $2 \arcsinh(1)$ in $N'$;
\item for any simple loop $\gamma$, we have
$$
\ell_{N'}(\gamma) \geq \ell_{N}(\gamma).
$$
In particular, 
$$
\sys_H(N') \geq \sys_H(N) \geq 2 \arcsinh(1).
$$
\end{enumerate}
Recall that $\ell_{N}(\gamma)$ is the length of the shortest loop of~$N$ homotopic to~$\gamma$. \\

\noindent We define a new surface $S$ by gluing $2n$ hyperbolic fat tori $T_1,\hdots,T_{2n}$ (from Lemma \ref{lem:fat} with $\varepsilon = 2 \arcsinh(1)$) to the boundary geodesics of $N'$.
This new surface is of genus $g+n$ and its homological systole is at least $2 \arcsinh(1)$.
As such, Theorem~\ref{theo:ell} implies that the first $\eta(g)+3n$ homologically independent curves $\gamma_1,\hdots,\gamma_{\eta(g)+3n}$ of a minimal homology basis of $S$ satisfy
\begin{eqnarray}
\ell_{S}(\gamma_k) & \leq & C_0 \frac{\log (2g+2)}{2g-\eta(g)-n} \, (g+n) \nonumber\\
& \leq & C_0 \, \frac{g}{g-\eta(g)} \, \log (2 g+2) \nonumber \\
& \leq & C_\lambda \, \log(g+1) \label{eq:Clambda}
\end{eqnarray}
for~$C_\lambda = \frac{2C_{0}}{1-\lambda} > 2 \arcsinh(1)/\log(2)$, from the assumption on~$\eta$ and since $n < \eta(g)$. \\

\noindent Although the surfaces $M$ and $S$ are possibly non homeomorphic, it should be clear what homotopy class we mean by $\alpha_k$ on $S$.
By construction of~$S$, the curves $\alpha_1,\ldots,\alpha_n$ are homologically trivial (separating) homotopical systolic loops of~$S$.
Furtermore, the loops $\gamma_k$ lie in a minimal homology basis of $S$.
Thus, from Lemma~\ref{lem:cut} and Remark~\ref{rem:straight}, the curves $\gamma_k$ are disjoint from the curves $\alpha_{1},\ldots,\alpha_n$.
In particular, $\gamma_{k}$ lies either in~$N'$ or in a fat torus~$T_{i}$. \\

\noindent Among the $\gamma_1,\hdots,\gamma_{\eta(g)+3n}$, some of them can lie in the fat tori $T_k$.
There are $2n$ fat tori and at most two homologically independent loops lie inside each torus.
Therefore, there are at least $\eta(g)-n$ curves among the $\gamma_k$ which lie in $N'$.
Renumbering the indices if necessary, we can assume that the loops $\gamma_{1},\ldots,\gamma_{\eta(g)-n}$ lie in~$N'$.
These $\eta(g)-n$ loops~$\gamma_{i}$ induce $\eta(g)-n$ loops in~$M$, still denoted by $\gamma_{i}$, through the inclusion of~$N'$ into~$M$.
Combined with the curves $\alpha_{1},\ldots,\alpha_{n}$ of~$M$, we obtain $\eta(g)$ loops $\alpha_{1}.\ldots,\alpha_{n},\gamma_{1},\ldots,\gamma_{\eta(g)-n}$ in~$M$. \\

\noindent Let us show that these loops are homologically independent in~$M$.
Consider an integral cycle
\begin{equation} \label{eq:cycle}
\sum_{i=1}^n a_i \, \alpha_i + \sum_{j=1}^{\eta(g)-n} b_j \, \gamma_j
\end{equation}
homologuous to zero in~$M$.
Since the curves $\alpha_i$ lie in the boundary $\partial N$ of $N$, the second sum of this cycle represents a trivial homology class in $H_1(N,\partial N;\Z)$, and so in $H_1(S;\Z)$.
Now, since the $\gamma_j$ are homologically independent in~$S$, this implies that all the $b_j$ equal zero.
Thus, the first sum in~\eqref{eq:cycle} is homologically trivial in~$M$.
As the $\alpha_i$ are homologically independent in~$M$, we conclude that all the $a_i$ equal zero too. \\

\noindent The curves $\alpha_k$ have their lengths bounded from above by $2 \arcsinh(1)$ and clearly satisfy the estimate~\eqref{eq:upper1}.
Now, since the simple curves $\gamma_k$ do not cross any of the $\alpha_i$, 
we have
$$
\ell_M(\gamma_k) = \ell_{N}(\gamma_{k}) \leq \ell_{N'}(\gamma_k) = \ell_{S}(\gamma_k).
$$
Therefore, the lengths of the $\eta(g)$ homologically independent geodesic loops $\alpha_{1}.\ldots,\alpha_{n},\gamma_{1},\ldots,\gamma_{\eta(g)-n}$ of~$M$ are bounded from above by $C_\lambda \, \log(g)$, with $C_{\lambda} = \frac{2^{17}}{1-\lambda}$. \\

\noindent{\it Step 2.} To complete the proof of the theorem, we need a lower bound on the width of the collars of these $\eta(g)$ closed geodesics in~$M$.
We already know that the curves $\alpha_1,\ldots,\alpha_n$ admit a collar of width~$w_0$ around each of them.
Without loss of generality, we can assume that the geodesics $\gamma_{1},\ldots,\gamma_{\eta(g)-n}$, which lie in~$N$, are part of a minimal basis of~$N$. \\

\noindent Let $\gamma$ be one of these simple closed geodesics. 
Recall that $\gamma$ is disjoint from the $\alpha_i$.
Suppose that the width~$w$ of its maximal collar is less than~$w_0$.
Then there exists a non-selfintersecting geodesic arc~$c$ of length~$2w$ intersecting $\gamma$ only at its endpoints.
Let $d_1$ be the shortest arc of~$\gamma$ connecting the endpoints of~$c$ and $d_2$ the longest one.
From Lemma~\ref{lem:simple} applied to~$N$, the arc~$d_1$ is no longer than~$c$.
Therefore, 
$$
\length(c \cup d_1) \leq 4w < 2 \, \arcsinh(1).
$$
By definition of the $\alpha_i$, the simple closed geodesic $\delta_1$ homotopic to the loop~$c \cup d_1$, of length less than $2 \arcsinh(1)$, is homologuous to an integral combination of the~$\alpha_i$.
Thus, the intersection number of $\gamma$ and~$\delta_1$ is zero.
As these two curves intersect at most once, they must be disjoint.
Denoting by~$\delta_2$ the simple closed geodesic homotopic to the loop~$c \cup d_2$, we deduce that $\gamma$, $\delta_1$ and $\delta_2$ bound a pair of pants. 
The curve~$c$ along with the three minimizing arcs joining $\gamma$ to $\delta_1$, $\gamma$ to $\delta_2$ and $\delta_1$ to~$\delta_2$ decompose this pair of pants into four right-angled pentagons.
From the pentagon formula,
$$
\cosh \left( \frac{1}{2} \length(\delta_i) \right) = \sinh(w) \, \sinh \left( \frac{1}{2} \length(d_i) \right)
$$
Since $w \leq \arcsinh(1)$, we deduce that $\delta_i$ is shorter than~$d_i$.
Hence,
$$
\length(\delta_1)+\length(\delta_2)\leq\length(\gamma).
$$
This yields another contradiction.
\end{proof}

\forget

As $\length(h) = w \leq \arcsinh(1)$,

This pair of pants is naturally divided into two isometric right-angled hexagons which in turn split into four right-angled pentagons obtained by cuting each hexagon along the height $h$ between $\gamma$ and its opposite side. As $\length(h) = w \leq \arcsinh(1)$, elementary hyperbolic trigonometry considerations in the right-angled pentagons involved lead to the inequality

\noindent{\it Step 2.} To complete the proof of the theorem, we need a lower bound on the width of the collars of these $\eta(g)$ closed geodesics in~$M$.
We already know that the curves $\alpha_1,\ldots,\alpha_n$ admit a collar of width~$w_0$ around each of them.
Without loss of generality, we can assume that the geodesics $\gamma_{1},\ldots,\gamma_{\eta(g)-n}$, which lie in~$N$, are part of a minimal basis of~$N$. \\

\noindent Let $\gamma$ be one of these geodesics. Suppose that the width~$w$ of its maximal collar is less than~$w_0$.
There exists a non-selfintersecting geodesic arc~$c$ of length~$2w$ intersecting $\gamma$ only at its endpoints. Let $d$ be the shortest arc of~$\gamma$ connecting the endpoints of~$c$.
From Lemma~\ref{lem:simple} applied to~$N$, the arc~$d$ is no longer than~$c$.
Therefore, 
$$
\length(c \cup d) \leq 4w < 2 \, \arcsinh(1).
$$
Hence, the simple loop~$c \cup d$ is homotopic to some noncontractible closed geodesic~$\alpha$ of length less than $2 \arcsinh(1)$. \\

\noindent Consider now a collar~$U$ of~$\alpha$ of width $w(\alpha)=\arcsinh(1/\sinh(\frac{1}{2} \length(\alpha)))$.
The length~$\ell$ of the boundary components of~$U$ is less than~$2\sqrt{2} \arcsinh(1)$, \cf~\cite[\S~4.1]{bus92}.
(Recall indeed that the length of~$\alpha$ is less than~$2 \arcsinh(1)$.)
Let $\delta$ be the distance from $c \cup d$ to~$M \setminus U$.
Remark that the injectivity radius of~$M$ at every point of~$c \cup d$ is less or equal to~$2w$.
Hence, from the collar theorem, \cf~\cite[Theorem~4.1.6]{bus92}, we derive
\begin{eqnarray*}
\sinh(2w) & \geq & \cosh \left( \frac{1}{2} \length(\alpha) \right) \, \cosh(\delta) - \sinh(\delta) \\
& \geq & \cosh(\delta) - \sinh(\delta) = e^{-\delta}.
\end{eqnarray*}
Thus, since $w < \frac{1}{2} \arcsinh(e^{-\sqrt{2} \arcsinh(1)}) \leq \frac{1}{2} \arcsinh(e^{-\ell/2})$, the loop~$c \cup d$ lies in~$U$ at distance greater than~$\ell/2$ from~$\partial U$.
In this case, the arc of~$\gamma \cap U$ containing~$d$ with endpoints in~$\partial U$, of length greater than~$\ell$, can be homotoped to a shorter arc of~$\partial U$ while keeping its endpoints fixed (recall that $\gamma$ does not intersect~$\alpha$).
This yields a contradiction because $\gamma$ is length minimizing in its homotopy class.
\forgotten

\subsection{Jacobians of Riemann surfaces} \label{sec:jacobian}

In this section, we present an application of the results of the previous section to the geometry of Jacobians of Riemann surfaces, extending the work~\cite{BS94} of P.~Buser and P.~Sarnak. \\

Consider a closed Riemann surface~$M$ of genus~$g$.
We define the $L^2$-norm $|.|_{L^2}$, simply noted $|.|$, on $H^1(M;\R) \simeq \R^{2g}$ by setting
\begin{equation} \label{eq:inf}
|\Omega|^2 = \inf_{\omega \in \Omega} \int_M \omega \wedge *\omega
\end{equation}
where $*$ is the Hodge star operator and the infimum is taken over all the closed one-forms~$\omega$ on~$M$ representing the cohomology class~$\Omega$.
The infimum in~\eqref{eq:inf} is attained by the unique closed harmonic one-form in the cohomology class~$\Omega$.
The space $H^1(M;\Z)$ of the closed one-forms on~$M$ with integral periods (that is, whose integrals over the cycles of~$M$ are integers) is a lattice of~$H^1(M;\R)$.
The Jacobian~$J$ of~$M$ is a principally polarized abelian variety isometric to the flat torus
$$
\TT^{2g} \simeq H^1(M;\R)/H^1(M;\Z)
$$
endowed with the metric induced by~$|.|$. \\

In their seminal article~\cite{BS94}, P.~Buser and P.~Sarnak show that the homological systole of the Jacobian of~$M$ is bounded from above by $\sqrt{\log(g)}$ up to a multiplicative constant.
In other words, there is a nonzero lattice vector in~$H^1(M;\Z)$ whose $L^2$-norm satisfies a $\sqrt{\log(g)}$ upper bound.
We extend their result by showing that there exist almost $g$ linearly independent lattice vectors whose norms satisfy a similar upper bound.
More precisely, we have the following.

\begin{theorem}
Let $\eta:\N \to \N$ be a function such that 
$$
\displaystyle \lambda := \sup_g \frac{\eta(g)}{g} < 1.
$$
Then there exists a constant~$C_\lambda=\frac{2^{17}}{1-\lambda}$ such that for every closed Riemann surface~$M$ of genus~$g$ there are at least $\eta(g)$ linearly independent lattice vectors $\Omega_{1},\ldots,\Omega_{\eta(g)} \in H^1(M;\Z)$ which satisfy
\begin{equation} \label{eq:upper2}
|\Omega_i|^2 \leq C_\lambda \, \log(g+1)
\end{equation}
for~every $i \in \{1,\ldots,\eta(g)\}$.
\end{theorem}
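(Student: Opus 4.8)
The plan is to deduce this theorem directly from Theorem~\ref{theo:lng} by transferring the geometric bound on loop lengths to the $L^2$-norm bound on the dual period lattice vectors, following the collar argument of Buser--Sarnak~\cite{BS94}. First I would apply Theorem~\ref{theo:lng} (with the same function $\eta$ and hence the same constant $C_\lambda$) to the underlying hyperbolic surface in the conformal class of~$M$: this produces $\eta(g)$ homologically independent simple closed geodesics $\alpha_1,\dots,\alpha_{\eta(g)}$ with $\length(\alpha_i)\le C_\lambda\log(g+1)$, each carrying an embedded collar of width at least $w_0=\tfrac12\arcsinh(1)$. Let $\Omega_1,\dots,\Omega_{\eta(g)}\in H^1(M;\Z)$ be integral cohomology classes dual to the homology classes $[\alpha_1],\dots,[\alpha_{\eta(g)}]$; since the $[\alpha_i]$ are linearly independent in $H_1(M;\R)$, the dual vectors can be chosen linearly independent in $H^1(M;\Z)$.

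The key step is the collar estimate: for each $i$ I would construct an explicit closed one-form $\omega_i$ representing $\Omega_i$ that is supported in the collar $\mathcal{C}_i$ around $\alpha_i$. Concretely, parametrize $\mathcal{C}_i$ by Fermi coordinates $(t,\rho)$ with $t\in\R/\length(\alpha_i)\Z$ and $\rho\in(-w,w)$ for the actual collar width $w\ge w_0$, and set $\omega_i = f(\rho)\,d\rho$ where $f$ is a bump function with $\int_{-w}^{w} f(\rho)\,d\rho=1$ and $f$ vanishing near $\rho=\pm w$; extend $\omega_i$ by zero outside $\mathcal{C}_i$. This is a smooth closed one-form whose period over any cycle equals the algebraic intersection number of that cycle with $\alpha_i$, so it represents $\Omega_i$. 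Then I would compute $\int_M \omega_i\wedge *\omega_i = \int_{\mathcal{C}_i} f(\rho)^2 \,dA$, bound the area element in the collar by the hyperbolic metric $dA = \cosh\rho\,d\rho\,dt \le \cosh(w_0)\,d\rho\,dt$ (using $w\ge w_0$ and choosing the bump function supported in $|\rho|\le w_0$), and optimize the choice of $f$: taking $f$ essentially constant equal to $1/(2w_0)$ gives $\int_M \omega_i\wedge*\omega_i \le \frac{\cosh(w_0)}{2w_0}\,\length(\alpha_i)$. Since $|\Omega_i|^2$ is the infimum of this energy over all representatives, we get $|\Omega_i|^2 \le \frac{\cosh(w_0)}{2w_0}\,\length(\alpha_i) \le \frac{\cosh(w_0)}{2w_0}\,C_\lambda\log(g+1)$, which is the desired bound after absorbing the universal factor $\cosh(w_0)/(2w_0)$ (a constant, since $w_0$ is fixed) into $C_\lambda$ — or more precisely, noting that the stated $C_\lambda=\frac{2^{17}}{1-\lambda}$ already has enough room, since $2 C_0/(1-\lambda)$ was used with slack in~\eqref{eq:Clambda} and $\cosh(w_0)/(2w_0)<2$.

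The main obstacle is making the collar form construction and its energy estimate rigorous: one must check that the Fermi coordinate chart genuinely covers an embedded collar of width $w_0$ (which is exactly the content of the last clause of Theorem~\ref{theo:lng}, so this is available), that the bump-function form is genuinely closed and has the correct integral periods (straightforward, since $d(f(\rho)\,d\rho)=0$ and the period computation reduces to Stokes' theorem / linking with $\alpha_i$), and that the area distortion in the collar is controlled — here one uses that the hyperbolic collar metric is $d\rho^2 + \cosh^2\!\rho\;dt^2$, so areas only grow by the bounded factor $\cosh(w_0)$. None of these is deep; the essential input is entirely Theorem~\ref{theo:lng}, and in particular the fact that the short homologically independent loops it produces come with uniform collars, which is precisely why that clause was included. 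I would close by remarking, as the text after Corollary~\ref{coro:C} does, that we do not know whether the number $\eta(g)$ here is optimal, in contrast with Theorem~\ref{theo:A}.
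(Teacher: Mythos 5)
Your proposal is correct and follows essentially the same route as the paper: apply Theorem~\ref{theo:lng} (to the hyperbolic metric in the conformal class, which is legitimate since the $L^2$-norm on one-forms is conformally invariant), build for each short loop a closed integral one-form supported in its width-$w_0$ collar, bound its energy linearly in $\length(\alpha_i)$, and get linear independence of the classes via Poincar\'e duality. The only cosmetic difference is that you estimate the collar energy by an explicit bump form in Fermi coordinates, obtaining $\frac{\cosh(w_0)}{2w_0}\,\length(\alpha_i)$, whereas the paper cites the Buser--Sarnak capacity bound $\frac{\length(\alpha_i)}{\pi-2\theta_0}$; both constants are absorbed in the same way.
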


\begin{proof}
Let $\alpha_{1},\ldots,\alpha_{\eta(g)}$ be the homologically independent loops of~$M$ given by Theorem~\ref{theo:lng}.
Following~\cite{BS94}, for every~$i$, we consider a collar~$U_i$ of width $w_0=\frac{1}{2} \arcsinh(1)$ around~$\alpha_i$.
Let $F_i$ be a smooth function defined on~$U_i$ which takes the value $0$ on one connected component of~$\partial U_i$ and the value~$1$ on the other.
We define a one-form~$\omega_i$ on~$M$ with integral periods by setting $\omega_i=dF_i$ on~$U_i$ and $\omega_i=0$ ouside~$U_i$.
Let $\Omega_i$ be the cohomology class of~$\omega_i$, that is, $\Omega_i=[\omega_i]$.
Clearly, we have
$$
|\Omega_i|^2 \leq \inf_{F_i} \int_{U_i} dF_i \wedge *dF_i
$$
where the infimum is taken over all the functions~$F_i$ as above.
This infimum agrees with the capacity of the collar~$U_i$.
Now, by~\cite[(3.4)]{BS94}, we have
$$
|\Omega_i|^2 \leq \frac{\length(\alpha_i)}{\pi-2 \theta_0}
$$
where $\theta_0 = \arcsin(1/\cosh(w_0))$. 

Since the homology class of~$\alpha_i$ is the Poincar\'e dual of the cohomology class~$\Omega_i$ of~$\omega_i$, the cohomology classes~$\Omega_i$ are linearly independent.
The result follows from Theorem~\ref{theo:lng}.
\end{proof}

\section{Short homologically independent loops on Riemannian surfaces}

This section is devoted to the proof of the Riemannian version of Theorem \ref{theo:lng}.
More precisely, we show the following.

\begin{theorem} \label{theo:lngr}
Let $\eta:\N \to \N$ be a function such that 
$$
\displaystyle \lambda := \sup_g \frac{\eta(g)}{g} < 1.
$$
Then there exists a constant~$C_\lambda=\frac{2^{18}}{1-\lambda}$ such that for every closed orientable Riemannian surface~$M$ of genus~$g$ there are at least $\eta(g)$ homologically independent loops $\alpha_{1},\ldots,\alpha_{\eta(g)}$ which satisfy
\begin{equation} \label{eq:upperr}
\length(\alpha_{i}) \leq C_\lambda \, \frac{\log(g+1)}{\sqrt{g}} \, \sqrt{\area(M)}
\end{equation}
for every $i \in \{1,\ldots,\eta(g) \}$.
\end{theorem}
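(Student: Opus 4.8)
The plan is to run the same strategy as in the hyperbolic case (Theorem~\ref{theo:lng}), replacing the hyperbolic-specific collar and trigonometry arguments by purely Riemannian ones, and keeping the area as the governing quantity rather than normalizing it. First I would observe that by scaling the metric we may assume $\area(M)=4\pi(g-1)$, so that the desired bound becomes $\length(\alpha_i)\le C_\lambda\,\log(g+1)$. If the homological systole of $M$ is already at least some fixed constant $\ell_0$ (to be chosen, e.g. $\ell_0=1$), then Theorem~\ref{theo:ell} applied with $k=2g-\eta(g)$, together with the assumption $\sup_g \eta(g)/g=\lambda<1$, gives the $\eta(g)$ shortest loops of a minimal homology basis length at most $C_0\,\frac{g}{g-\eta(g)}\log(2g+2)\le \frac{C}{1-\lambda}\log(g+1)$, and we are done. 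So the real content is the case where $M$ has short homologically independent geodesics.

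The second step handles that case exactly as in Step~1 of the proof of Theorem~\ref{theo:lng}. Let $\alpha_1,\dots,\alpha_n$ be a maximal collection of homologically independent loops of length $<\ell_0$; we may assume $n<\eta(g)$ (otherwise we are again done with $C_\lambda=\ell_0/\log 2$). Each $\alpha_i$ may be taken to be a homological systolic loop, hence simple and satisfying the ``convexity'' property of Lemma~\ref{lem:cut} (distance between two of its points equals the shorter subarc). Cutting $M$ along the $\alpha_i$ and capping each resulting boundary circle by a round hemisphere of the corresponding small boundary length produces a (possibly disconnected) closed Riemannian surface whose total area is at most $\area(M)+n\,\ell_0^2/\pi\le \area(M)+\ell_0^2 g/\pi$, still $O(g)$, and whose homological systole is $\ge \ell_0$ --- this is the Riemannian analogue of gluing fat tori, and it is in fact cleaner because we do not need the delicate fat-torus estimates of Lemma~\ref{lem:fat}. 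By Lemma~\ref{lem:cut} the loops of a minimal homology basis of the capped surface avoid the caps, hence descend to loops in $M\setminus\{\alpha_1,\dots,\alpha_n\}$; applying Theorem~\ref{theo:ell} to the capped surface and discarding at most a bounded-per-component number of basis loops that might live on a capped-off handle, we extract $\eta(g)-n$ such loops $\gamma_1,\dots,\gamma_{\eta(g)-n}$ of length $\le \frac{C}{1-\lambda}\log(g+1)$ lying in $M$. The homological independence of $\alpha_1,\dots,\alpha_n,\gamma_1,\dots,\gamma_{\eta(g)-n}$ in $M$ follows by the same relative-homology argument as in Theorem~\ref{theo:lng}: any integral relation pushes the $\gamma$-part into a trivial class of $H_1$ of the capped surface, forcing those coefficients to vanish, and then the $\alpha$-coefficients vanish by maximality.

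The step I expect to be the only genuine subtlety --- and the reason a separate proof is needed rather than a one-line reduction --- is making sure the capping construction does not interfere with lengths: a loop in the capped surface must be deformable, without increasing length, off the hemispherical caps and back into $M$. This is where one uses that the caps are attached along short convex circles, so that any arc entering a cap can be pushed to the boundary circle of the cap at no greater length (a round hemisphere of small boundary length is a ``short'' cap in the sense that its diameter is comparable to the boundary length). Once this is in place, $\ell_M(\gamma_k)\le \ell_{\text{capped}}(\gamma_k)$ and the length bounds transfer verbatim. Collecting the two cases and tracking constants ($C_0=2^{16}$ from Theorem~\ref{theo:ell}, the factor $\frac{1}{1-\lambda}$, and the area loss from capping) yields $C_\lambda=\frac{2^{18}}{1-\lambda}$, completing the proof.
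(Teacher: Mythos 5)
Your overall strategy (excise the short homologically independent curves, close up the boundary, apply Theorem~\ref{theo:ell}, then re-inject the excised curves) is indeed the paper's strategy, but the two steps where your proposal deviates are exactly the two places where the Riemannian case is genuinely harder than the hyperbolic one, and both are gaps. First, you need the curves $\alpha_1,\dots,\alpha_n$ to be simple and \emph{pairwise disjoint} in order to cut along them, and you justify this by saying each ``may be taken to be a homological systolic loop, hence simple and satisfying the convexity property of Lemma~\ref{lem:cut}''. In a general Riemannian metric there is no collar lemma: homologically independent loops of length $<\ell_0$ (even the loops of a minimal homology basis, cf.\ Lemma~\ref{lem:simple}) can intersect one another, and the members of a maximal independent family are not systolic loops and need not have the convexity property. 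In the hyperbolic proof of Theorem~\ref{theo:lng} disjointness comes for free from the collar lemma; here it does not, and the paper instead takes a \emph{maximal collection of pairwise disjoint simple closed geodesics} of length at most $\varepsilon$ which are homologically independent. Second, with that (necessarily weaker) choice of maximality, your central claim --- that cutting along the $\alpha_i$ and capping with round hemispheres produces a closed surface with homological systole at least $\ell_0$ --- does not follow. There may be loops of length $<\ell_0$ in $M\setminus\{\alpha_1,\dots,\alpha_n\}$ which are homologically independent from the $\alpha_i$ in $M$: their geodesic representatives may cross the $\alpha_i$ or fail to be simple, so they do not contradict maximality, yet any such loop survives in the capped surface as a short homologically nontrivial loop and Theorem~\ref{theo:ell} cannot be applied. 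You are implicitly conflating maximality over \emph{all} short independent loops (which would give the systole bound but destroys disjointness, hence the cutting) with maximality over disjoint simple geodesics (which allows the cutting but not the bound).

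This is precisely what the paper's Steps~2 and~3 are designed to handle, and why the fat tori are not an avoidable ``delicacy''. Step~2 replaces a maximal-area domain bounded by each boundary curve with a flat cylinder of circumference $\varepsilon$: this normalizes the boundary length, makes the marked length spectrum only increase, and equips each gluing circle with a standard collar so that it is convex in the sense of Lemma~\ref{lem:cut}. Gluing the fat tori of Lemma~\ref{lem:fat} (no short essential loops, and essential arcs longer than any homologically nontrivial loop) then forces every short loop of the resulting surface $S$, without length increase, either into a fat torus (where it is long or contractible) or into $N'$, where the marked-length-spectrum comparison together with the fact that short simple closed geodesics of the cut surface are separating --- this is where the disjoint-maximality is actually used --- shows it is null-homologous in $S$; and Lemma~\ref{lem:cut} applied to the convex, homologically trivial gluing circles guarantees that the minimal-basis loops of $S$ avoid them and descend to $M$ with $\ell_M(\gamma_k)\le\ell_S(\gamma_k)$. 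A hemispherical cap performs none of these functions; the only point it settles is the easy ``push arcs off the cap'' remark you make. There are also secondary bookkeeping issues (the capped surface has genus $g-n$ but area comparable to $\area(M)$, so Theorem~\ref{theo:ell} must be used in the area-explicit form of its proof rather than as stated, and disk caps carry no homology, so there are no ``capped-off handles'' whose basis loops need discarding), but the missing justification of the systole lower bound for the surgered surface, together with the unproved disjointness of the $\alpha_i$, is the essential gap.
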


\begin{remark}
In \cite[Theorem 5.3.E]{gro83}, M.~Gromov also proposes an extension of a systolic 
inequality to families of curves. Note however that neither the growth 
rate (roughly $g^{1-\varepsilon}$ for any positive~$\varepsilon$) nor the number of curves is close to being 
optimal. Furthermore, even an adaptation of the proof taking into 
account the stronger systolic inequality \cite[Theorem 2.6]{gro96} gives a bound of order $\sqrt{g^{c \log g}}.\log g$ for the $g/4$ first curves, where $c$ is some positive constant. 
\end{remark}

\begin{remark}
A non-orientable version of this statement also holds, \cf~Remark~\ref{rem:ln}.
\end{remark}

\begin{proof}[Proof of Theorem~\ref{theo:lngr}]
Multiplying the metric by a constant if necessary, we can assume that the area of~$M$ is equal to~$g$. 
The strategy of the proof is to deform the surface~$M$ into another surface to which we can apply Theorem~\ref{theo:ell}.
Through this deformation, we want to make the homological systole uniformly bounded from below while controlling the area and the length of some minimal homology basis. \\

\noindent{\it Step 1.} 
Fix $\varepsilon=1$.
Consider a maximal collection of simple closed geodesics $\alpha_{1},\ldots,\alpha_{n}$ of length at most~$\varepsilon$ which are pairwise disjoint and homologically independent in~$M$.
We have $n \leq g$.
Let us cut the surface open along the loops~$\alpha_{i}$.
We obtain a compact surface~$N$ of signature $(g-n,2n)$.
Each loop~$\alpha_i$ gives rise to two boundary components $\alpha_i^+$ and~$\alpha_i^-$.
By construction, the simple closed geodesics of~$N$ of length at most~$\varepsilon$ are separating.

If $n \geq \eta(g)$ then the theorem clearly holds.
We will therefore assume that $n < \eta(g)$. \\

\noindent{\it Step 2.} 
Set $\alpha=\alpha_1^+$.
Consider a loop~$c$ homotopic to~$\alpha$ of length at most~$\varepsilon$ on~$N$ which bounds a domain~$C$ of maximal area with~$\alpha$.
By the Ascoli theorem, such a loop exists.
The domain~$C$ may not be homeomorphic to a cylinder because $\alpha$ may have (nontransverse) self-intersections.
However, if we remove~$C$ from~$N$ and glue a cylinder along~$c$, we get a surface homeomorphic to~$N$.

The domain~$C$ of~$N$ does not cover the whole surface~$N$.
Otherwise, every $1$-dimensional homology class of~$N$ could be represented by a loop lying in the support of~$c$.
Since some $1$-dimensional homology classes of~$N$ cannot be represented by the sum of loops of length at most~$\varepsilon$, the length of~$c$ would be greater than~$\varepsilon$, which is impossible.

Thus, the loop $c$ decomposes~$N$ into two domains with nonempty interiors: $C$ and its complementary.
This implies that the length of~$c$ is equal to~$\varepsilon$, otherwise we could push the loop~$c$ away from~$\alpha$ in a neighborhood of some point of~$c$.

Now, we replace the domain~$C$ of~$N$ with the flat cylinder~$S_{\varepsilon} \times [0,\varepsilon/2]$, where $S_{\varepsilon}$ is the circle of length~$\varepsilon$.
The resulting surface~$N_1$ has the same signature as~$N$ and its area is less or equal to $\area(M)+\varepsilon^2/2$.
Furthermore, no loop of~$N_1$ of length less than~$\varepsilon$ is homotopic to the boundary component of~$N_1$ corresponding to~$\alpha$, otherwise we could derive a contradiction with the definition of~$c$.
In addition, the marked length spectrum of~$N_{1}$ is clearly greater or equal to the one of~$N$, \cf~Definition~\ref{def:mls}. \\

\noindent We successively repeat this process with the remaining $2n-1$ curves $\alpha_{1}^-,\alpha_2^{\pm},\ldots,\alpha_{n}^{\pm}$.
At the end, we obtain a surface~$N'=N_{2n}$, homeomorphic to~$N$, of area less or equal to $\area(N)+n \varepsilon^{2}$ whose marked length spectrum is greater or equal to the one of~$N$.
By construction, the loops of~$N'$ of length less than~$\varepsilon$ are separating and nonhomotopic to the boundary components of~$N'$. \\

\noindent{\it Step 3.}
We can now follow the proof of Theorem~\ref{theo:lng}.
As the $2n$ boundary components of~$N'$ are isometric to~$S_{\varepsilon}$, we can attach to each of them a fat torus, \cf~Lemma~\ref{lem:fat}.
This yields a genus $g+n$ surface~$S$ with homological systole at least~$\varepsilon$.
From Theorem~\ref{theo:ell} applied to~$S$, the first $\eta(g)+3n$ homologically independent loops $\gamma_{1},\ldots,\gamma_{\eta(g)+3n}$ of~$S$ satisfy the following inequality (compare with~\eqref{eq:Clambda})
\begin{equation}
\length(\gamma_{i}) \leq 2 \, C_\lambda \, \log(g+1) \label{eq:logg+2}
\end{equation}
for $C_{\lambda}=\frac{2C_{0}}{1-\lambda}$, where $C_{0}=2^{17}$, since $n < \eta(g) < g$ and $\area(S) = g+2n \, 2 \pi + n \varepsilon^2 \leq (2+4\pi) g$. \\

\noindent Now, from Lemma~\ref{lem:cut}, the loops~$\gamma_{i}$ do not cross the curves~$\alpha_k$ and therefore lie either in a fat torus or in $N'$.
Arguing as in the proof of Theorem~\ref{theo:lng}, we obtain $\eta(g)-n$ loops among the~$\gamma_{i}$, say $\gamma_1,\ldots,\gamma_{\eta(g)-n}$, which form with $\alpha_1,\ldots,\alpha_n$ a family of $\eta(g)$ homologically independent loops of~$M$.

The lengths of the $\alpha_k$ are bounded from above by~$\varepsilon$ and satisfy the upper bound~\eqref{eq:upperr}.
Since the curves $\gamma_k$ do not cross any of the $\alpha_i$, we also have
$$
\ell_M(\gamma_k) = \ell_N(\gamma_k) \leq \ell_{N'}(\gamma_k) = \ell_{S}(\gamma_k).
$$
Therefore, the lengths of the $\eta(g)$ loops $\alpha_{1},\ldots,\alpha_n, \gamma_{1}.\ldots,\gamma_{\eta(g)-n}$ are bounded from above by $C_\lambda \, \log(g)$, with $C_{\lambda} = \frac{2^{18}}{1-\lambda}$.
\end{proof}

Theorem~\ref{theo:lngr} also leads to the following upper bound on the sum of the lengths of the $g$ shortest homologically independent loops of a genus~$g$ Riemannian surface.

\begin{corollary}
There exists a universal constant~$C$ such that for every closed Riemannian surface~$M$ of genus~$g$, the sum of the lengths of the $g$ shortest homologically independent loops is bounded from above by
$$
C \, g^{3/4} \sqrt{\log(g)} \, \sqrt{\area(M)}.
$$
\end{corollary}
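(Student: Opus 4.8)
The plan is to sum the bound of Theorem~\ref{theo:lngr} over the $g$ shortest homologically independent loops, but to exploit the extra information coming from the $\frac{\log(2g-k+2)}{2g-k+1}$-type decay rather than just using the uniform bound $\length(\alpha_i)\le C_\lambda\,\frac{\log(g+1)}{\sqrt g}\sqrt{\area(M)}$ for all $i$. Crudely summing the uniform bound $g$ times gives $C\,\sqrt g\,\log(g)\,\sqrt{\area(M)}$, which is worse than the claimed $C\,g^{3/4}\sqrt{\log g}\,\sqrt{\area(M)}$; so the point is that Theorem~\ref{theo:lngr} only controls the first $\eta(g)$ loops for $\eta(g)/g<1$ strictly, and applying it for a single well-chosen $\lambda$ is not enough. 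Instead I would apply Theorem~\ref{theo:lngr} for a sequence of values $\lambda_j$ approaching $1$, or equivalently go back one level and observe that the loops $\gamma_k$ produced in the proof of Theorem~\ref{theo:lngr} come (via Theorem~\ref{theo:ell} applied to the auxiliary surface $S$) with the sharper individual estimate $\length(\gamma_k)\le C_0\,\frac{\log(2g-k+2)}{2g-k+1}\,g$ on the normalized surface, i.e.\ with $\area$ put back in,
\begin{equation*}
\sys_k(M)\;\le\; C\,\frac{\log(2g-k+2)}{2g-k+1}\,\sqrt{g}\,\sqrt{\area(M)}
\end{equation*}
for $k$ up to roughly $g$ (the constant absorbing the $\frac{1}{1-\lambda}$ loss, which stays bounded as long as $k\le \lambda g$ with $\lambda$ bounded away from $1$).

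First I would fix the normalization $\area(M)=g$, as in the proof of Theorem~\ref{theo:lngr}, so that the target becomes: the sum of the $g$ shortest homologically independent lengths is $\le C\,g^{3/4}\sqrt{\log g}$. Next, split the range $k\in\{1,\dots,g\}$ into two parts at a threshold, say $k\le g - \sqrt{g}$ and $g-\sqrt{g}<k\le g$. For the first part I use $\sys_k(M)\le C\,\frac{\log(2g-k+2)}{2g-k+1}\,g$; summing a telescoping-type series $\sum_{k=1}^{g-\sqrt g}\frac{\log(2g-k+2)}{2g-k+1}$ is comparable to $\int_{\sqrt g}^{2g}\frac{\log t}{t}\,dt \asymp (\log g)^2$, so this part contributes $\lesssim g(\log g)^2$ — which is still too big, so the two-part split has to be rebalanced, or one uses that for each individual $k$ we also have the Theorem~\ref{theo:lngr} bound $\sys_k(M)\le C_{\lambda}\sqrt g\log g$ with $\lambda=k/g$. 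The efficient way is: for each $k$ take the better of the two bounds. The crossover is where $\frac{g\log g}{2g-k}\approx \sqrt g\log g$, i.e.\ $2g-k\approx\sqrt g$, i.e.\ $k\approx 2g-\sqrt g$ — but $k\le g$, so in the whole relevant range $2g-k\ge g$ and the first bound $\frac{g\log g}{2g-k}\le \log g$ (times constants). Hmm: that already gives $\sum_{k=1}^g \sys_k \lesssim g\log g$ on the area-$g$ surface, i.e.\ $\lesssim \sqrt{g}\log g\,\sqrt{\area}$ after unnormalizing. To gain the extra factor one must use that the constant in Theorem~\ref{theo:ell} contains $\frac{1}{\min\{1,\ell\}}$ and here $\ell$ is a fixed universal constant ($\varepsilon=1$, fat tori of boundary $2\arcsinh 1$), so $C_0$ is genuinely universal, and then optimize the scaling: instead of normalizing area to $g$, rescale so that a typical short loop has length $\asymp 1$, apply Gromov's $\sqrt g$ bound of \cite[1.2.D']{gro83} (quoted in the introduction as $\sys_g(M)\le C\sqrt g$ on the area-$4\pi(g-1)$ surface) for the last few $k$, and the logarithmic bound for the bulk.

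Concretely: for $k\le g/2$, $2g-k+1\ge 3g/2$, so $\sys_k(M)\le C\frac{\log g}{g}\,g = C\log g$ (area-$g$ normalization), contributing $\le C\,\frac g2\log g$ — too big again. So the genuine mechanism must be different from naive summation: I believe the right statement to prove is that one should \emph{not} sum $g$ copies of a $\log g$ bound but rather observe that Theorem~\ref{theo:lngr} with $\eta(g)=[g/2]$ gives $[g/2]$ loops of length $\le C\log g$, contributing $\lesssim g\log g$, \emph{and} the remaining $\sim g/2$ loops are controlled by Gromov's $\sqrt g$-bound on $\sys_g$, each of length $\le C\sqrt g$, contributing $\lesssim g^{3/2}$ — still too big. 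The resolution: interpolate at a threshold $k_0=g-m$ and use $\sys_k\le C\sqrt{g}\cdot\frac{\sqrt g}{\sqrt{g-k}}$-type intermediate bounds obtained by applying Theorem~\ref{theo:lngr} to the value $\lambda=k/g$ and tracking the $\frac{1}{1-\lambda}$ factor: $\sys_k(M)\le \frac{C}{1-k/g}\,\frac{\log g}{\sqrt g}\sqrt g=\frac{Cg\log g}{g-k}$. Then $\sum_{k=1}^{g-m}\frac{g\log g}{g-k}\asymp g\log g\log(g/m)$ and for $k>g-m$ use $\sys_k\le\sys_g\le C\sqrt g$, contributing $Cm\sqrt g$. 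Optimizing $g\log g\log(g/m)+m\sqrt g$ over $m$: the balance is at $m\log(g/m)\asymp \sqrt g\log g$, giving $m\asymp\sqrt g\sqrt{\log g}$ roughly and total $\asymp g^{3/4}\cdot(\log g)^{?}$; choosing $m=\lceil \sqrt{g/\log g}\,\rceil$ or similar yields $\asymp g^{3/4}\sqrt{\log g}$ after unnormalization — this matches the claimed bound.

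The main obstacle is pinning down the correct intermediate bound $\sys_k(M)\lesssim \frac{g\log g}{g-k}$ (area-$g$ normalization) and verifying that the constant stays universal as $k/g\to 1$, which requires re-reading the proof of Theorem~\ref{theo:lngr}/Theorem~\ref{theo:ell} and checking that the only blow-up is the explicit $\frac{1}{1-\lambda}$ factor (with $\ell$ a fixed universal constant throughout, so $C_0$ is universal); then the optimization of $g\log g\log(g/m)+m\sqrt g$ over the threshold $m$ is routine calculus giving the exponent $3/4$ and the $\sqrt{\log g}$ factor. I would organize the write-up as: (i) normalize $\area(M)=g$; (ii) for each $k\in\{1,\dots,g\}$ record the bound $\sys_k(M)\le \min\{C\sqrt g,\ \frac{Cg\log g}{g-k+1}\}$ coming from Gromov and from Theorem~\ref{theo:lngr}; (iii) split the sum at $k=g-m$ with $m=\lceil\sqrt{g/\log g}\rceil$, bound each piece, add, and unnormalize by multiplying by $\sqrt{\area(M)/g}$.
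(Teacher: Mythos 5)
Your overall strategy---control the bulk of the $g$ loops via Theorem~\ref{theo:lngr} and the remaining tail via Gromov's bound $\sys_g(M)\leq C''\sqrt{\area(M)}$, then optimize the split---is exactly the paper's. Where you go astray is in dismissing the simplest version of it: the paper applies Theorem~\ref{theo:lngr} once with a single $g$-dependent $\lambda$, bounds each of the first $[\lambda g]$ lengths by $\frac{C'}{1-\lambda}\frac{\log(g+1)}{\sqrt g}\sqrt{\area(M)}$ and each of the remaining $g-[\lambda g]$ by $C''\sqrt{\area(M)}$, so the sum is at most $\frac{C'}{1-\lambda}\sqrt g\,\log(g+1)\sqrt{\area(M)}+C''(1-\lambda)g\sqrt{\area(M)}$, and the choice $1-\lambda\asymp\sqrt{\log g}\,g^{-1/4}$ balances the two terms and yields $g^{3/4}\sqrt{\log g}\,\sqrt{\area(M)}$ directly. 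The only input needed is that the constant in Theorem~\ref{theo:lngr} is explicitly $C_\lambda=2^{18}/(1-\lambda)$, which is part of the statement; so the ``main obstacle'' you identify (re-reading the proofs of Theorems~\ref{theo:ell} and~\ref{theo:lngr} to check that the only blow-up is $\frac{1}{1-\lambda}$) is no obstacle at all, and your assertion that ``a single well-chosen $\lambda$ is not enough'' is mistaken---it is precisely the paper's proof.

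Your per-$k$ refinement (apply the theorem with $\lambda=k/g$ to get $\sys_k(M)\leq\frac{C}{1-k/g}\frac{\log(g+1)}{\sqrt g}\sqrt{\area(M)}$ for $k<g$, sum harmonically up to $k=g-m$, and use Gromov for the last $m$ indices) is also legitimate and, done correctly, even proves a slightly stronger bound; but your bookkeeping misreports what it gives. The two pieces are $\asymp\sqrt g\,\log g\,\log(g/m)\,\sqrt{\area(M)}$ and $\asymp m\sqrt{\area(M)}$, whose combined minimum (any $m$ between $\sqrt{g/\log g}$ and $\sqrt g\log g$ works) is of order $\sqrt g\,(\log g)^2\sqrt{\area(M)}$---not ``$\asymp g^{3/4}\sqrt{\log g}$'' as you claim; also, under your normalization $\area(M)=g$ the target is $g^{5/4}\sqrt{\log g}$, not $g^{3/4}\sqrt{\log g}$. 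Since $\sqrt g\,(\log g)^2\leq g^{3/4}\sqrt{\log g}$ only once $(\log g)^{3/2}\leq g^{1/4}$, you must add this comparison and dispose of bounded genus trivially (Gromov alone gives $\leq C g\sqrt{\area}$ there) to recover the stated corollary. Finally, the sharper bound you display first, $\sys_k(M)\lesssim\frac{\log(2g-k+2)}{2g-k+1}\sqrt g\,\sqrt{\area(M)}$ ``for $k$ up to roughly $g$'', cannot hold near $k=g$: for $k=g-1$ it would produce $g-1$ homologically independent loops of length $\lesssim\log g$ on a normalized surface, contradicting Theorem~\ref{theo:B}. Passing through the auxiliary surface in the proof of Theorem~\ref{theo:lngr} degrades that denominator to $\sim g-k$, i.e.\ exactly the $\frac{1}{1-\lambda}$ form you actually use in your final recipe, so this slip is harmless there but should be removed.
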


\begin{proof}
Let $\lambda \in (0,1)$.
From Theorem~\ref{theo:lngr}, the lengths of the first $[\lambda g]$ homologically independent loops are bounded by $\displaystyle \frac{C'}{1-\lambda} \, \frac{\log(g+1)}{\sqrt{g}} \, \sqrt{\area(M)}$, while from~\cite[1.2.D']{gro83}, the lengths of the next $g-[\lambda g]$ others are bounded by $\displaystyle C'' \, \sqrt{\area(M)}$, for some universal constants $C'$ and $C''$.
Thus, the sum of the $g$ shortest homologically independent loops is bounded from above by
$$
C' \, \frac{[\lambda g]}{1-\lambda} \, \frac{\log(g+1)}{\sqrt{g}} \, \sqrt{\area(M)} + C'' \, (g-[\lambda g]) \, \sqrt{\area(M)}.
$$
The result follows from a suitable choice of $\lambda$.
\end{proof}

\forget
We can assume that the loops $\alpha_{1},\ldots,\alpha_{\eta(g)+3k}$ are length-minimizing representatives of a minimal homology basis on~$S$.
As in Lemma~\ref{lem:disj}, the loops~$\alpha_{i}$ lie either in a fat torus or in $N'_{m'} \setminus \{\gamma'_{1},\ldots,\gamma'_{m'}\}$.
Since at most two homologically independent loops lie in each fat torus, there are at least $\eta(g)-k$ curves among the~$\alpha_{i}$ which lie in $N'_{m'} \setminus \{\gamma'_{1},\ldots,\gamma'_{m'}\}$.
Furthermore, we can assume that these $\eta(g)-k$ loops are homologically independent in $H_{1}(N'_{m'},\partial N'_{m'}) \simeq H_{1}(N,\partial N)$.

From Steps 2 and 3, the homotopy classes of these $\eta(g)-k$ curves can be represented in $N' \simeq N$, and so in $M$, by $\eta(g)-k$ loops satisfying the relation~\eqref{eq:logg+2}.
Since these $\eta(g)-k$ loops are homologically independent with the $k$ loops $\gamma_{1},\ldots,\gamma_{k}$ of~$M$ of length at most~$\varepsilon$, we obtain the desired result.
\forgotten

\forget
Let $(M,\G_0)$ be a closed Riemannian surface of genus $g$ with area less than $g$ and systole less than $\epsilon$. 

Let $c$ be a closed geodesic homologically non trivial with length less than $\epsilon$.
We cut $M$ along $c$ and obtain a new surface $M'$ with two boundary geodesics $\alpha$ and $\beta$.
Let $C^\epsilon_\alpha$ denote the completion of the set consisting of all simple loops of $M'$ in the homotopy class of $\alpha$ with length less than $\epsilon$.
This set is not empty as $\alpha \in C^\epsilon_\alpha$ and we can define a partial order on it as follows.
Two elements $\alpha_1$ and $\alpha_2$ of $C^\epsilon_\alpha$ satisfies the relation $\alpha_1 \leq \alpha_2$ if the cylinder of $M'$ bounded by both $\alpha$ and $\alpha_2$ contained $\alpha_1$.
So $(C^\epsilon_\alpha,\leq)$ is a partially ordered set.
Furthermore every chain $L$, {\it i.e.} a totally ordered subset, of $C^\epsilon_\alpha$ admits an upperbound.
For this consider the union of all the cylinder bounded by both $\alpha$ and an element of $L$.
This is a cylinder whose boundary is an element of $C^\epsilon_\alpha$. 
By Zorn's lemma the set $C^\epsilon_\alpha$ admits a maximal element $\alpha'$.
We remove the cylinder bounded by $\alpha$ and $\alpha'$ and define $C^\epsilon_\beta$ in the new surface obtained.
We can also find a maximal element $\beta'$ in $C^\epsilon_\beta$.
By removing the cylinder bounded by $\beta$ and $\beta'$, we obtain a surface denoted by $M''$.
Either the two boundary curves $\alpha'$ and $\beta'$ have length $\epsilon$, or their union is a graph $\Gamma$ of length less than $2\epsilon$ such that 
$$
p : \pi_1(\Gamma) \to \pi_1(M'',\partial M'')
$$
is injective. In this last case we can find $[g/2]$ homogically independent curves of length less than $\epsilon$ and the proof is finished.
If they do not coincide, then $\alpha'$ and $\beta'$ have length $\epsilon$. Then we glue a cylinder of width $\epsilon$ on $M''$ by identifying $\alpha'$ with one side of the cylinder and $\beta'$ with the other side. We then obtain a surface denoted by $M_1$ with
$$
\area(M_1) \leq \area(M)+\epsilon^2;
$$
such that every loop in the homotopy of $c$ or intersecting the homotopy class of $c$ has length at least $\epsilon$.
\forgotten

\section{Asymptotically optimal hyperbolic surfaces} \label{sec:cex}

In this section, we show that the bound obtained in Theorem~\ref{theo:lngr} on the number of homologically independent loops of length at most $\sim \log(g)$ on a genus~$g$ hyperbolic surface is optimal.
Namely, we construct hyperbolic surfaces whose number of homologically independent loops of length at most $\sim \log(g)$ does not grow asymptotically like~$g$.
Specifically, we prove the following (we refer to Definition~\ref{def:ksys} for the definition of the $k$-th homological systole).

\begin{theorem} \label{theo:cex}
Let $\eta:\N \to \N$ be a function such that $\displaystyle \lim_{g \to \infty} \frac{\eta(g)}{g} = 1$.
Then there exists a sequence of genus~$g_{k}$ hyperbolic surfaces~$M_{g_{k}}$ with $g_{k}$ tending to infinity such that
$$
\lim_{k \to \infty} \frac{\sys_{\eta(g_{k})}(M_{g_{k}})}{\log(g_{k})} = \infty.
$$
\end{theorem}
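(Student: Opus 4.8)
The plan is to reduce Theorem~\ref{theo:cex} to the construction of a single hyperbolic surface for each choice of two parameters, and then to produce such surfaces by grafting many very thin hyperbolic handles onto a ``fat'' surface of large homological systole, adapting the arithmetic examples of Buser--Sarnak~\cite{BS94} and Katz--Schaps--Vishne~\cite{KSV07}.

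\medskip
\noindent\textit{Reduction.} For a closed Riemannian surface $M$ of genus $g$ and a length $L>0$, let $\rho_L(M)$ denote the rank of the subgroup of $H_1(M;\Z)$ generated by the classes of loops of $M$ of length at most $L$. By Definition~\ref{def:ksys} one has $\sys_k(M)>L$ exactly when $\rho_L(M)<k$. It therefore suffices to construct, for every $\epsilon\in(0,1)$ and every $A>1$, a closed hyperbolic surface $N=N_{\epsilon,A}$ whose genus $g$ may be taken arbitrarily large (inside an infinite admissible set) and which satisfies $\rho_{A\log g}(N)<(1-\epsilon)g$. Granting this, pick $A_k\uparrow\infty$ and $\epsilon_k\downarrow 0$ (decreasing fast enough for the construction to be admissible at arbitrarily large genus); since $\eta(g)/g\to 1$, for each $k$ one finds a genus $g_k>g_{k-1}$ which is admissible for $(\epsilon_k,A_k)$ and large enough that $\eta(g_k)\ge(1-\epsilon_k)g_k$, and one sets $M_{g_k}:=N_{\epsilon_k,A_k}$ of genus $g_k$. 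Since $\sys_k$ is nondecreasing in $k$,
\[
\frac{\sys_{\eta(g_k)}(M_{g_k})}{\log g_k}\ \ge\ \frac{\sys_{(1-\epsilon_k)g_k}(M_{g_k})}{\log g_k}\ >\ A_k\ \longrightarrow\ \infty .
\]

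\medskip
\noindent\textit{Construction.} Take a ``core'' $\Sigma$, a hyperbolic surface of genus $p$ with $n$ geodesic boundary circles all of a tiny length $\varepsilon=\varepsilon(g)$, where $g:=p+n$, obtained by suitably drilling the arithmetic large-systole surfaces of~\cite{BS94,KSV07}; the parameters are arranged so that $\log(1/\varepsilon)\ge A\log g$, so that $p>\epsilon g$ (hence $n<(1-\epsilon)g$), and so that every loop of $\Sigma$ whose class is not a combination of the boundary classes has length $\ge A\log g$. Glue to each boundary circle a thin hyperbolic one-holed torus $T_i$ of boundary length $\varepsilon$ with systole $\le\varepsilon$; this yields a closed hyperbolic surface $N$ of genus $g$, with $H_1(N;\Z)\cong H_1(\Sigma;\Z)\oplus\bigoplus_iH_1(T_i;\Z)$ once the (separating, hence trivial) boundary classes $[\partial T_i]$ are killed.

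\medskip
\noindent\textit{Bounding $\rho_{A\log g}(N)$.} Each waist $\partial T_i$ has length $\varepsilon$ and satisfies the hypothesis of Lemma~\ref{lem:cut}, so (as in Step~1 of the proofs of Theorems~\ref{theo:lng} and~\ref{theo:lngr}) no loop of a minimal homology basis of $N$ crosses any $\partial T_i$; thus every basis loop lies in some $T_i$ or in $\Sigma$. A loop of length $\le A\log g$ inside $\Sigma$ represents, by the third property above, the trivial class in $N$; a loop of length $\le A\log g$ inside $T_i$ cannot cross the collar of $a_i$ (of width $>A\log g$ by the collar lemma and the choice of $\varepsilon$), so it represents a multiple of $[a_i]$. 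Hence every class carried by a loop of length $\le A\log g$ lies in $\bigoplus_i\Z[a_i]$, so $\rho_{A\log g}(N)\le n<(1-\epsilon)g$, as desired; moreover the $\eta(g)$--th shortest element of a minimal homology basis is then one of the dual curves $b_i$ (of length $\ge\log(1/\varepsilon)$) or a ``genus loop'' of $\Sigma$, in either case of length $\ge A\log g$.

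\medskip
\noindent\textit{Main obstacle.} The difficulty is concentrated in the second step: one must let the core genus $p=p(g)$ grow with $g$ -- as large as the chosen $\eta$ forces -- while keeping \emph{every} ``genus direction'' of the drilled core metrically larger than $A\log g$, under the rigid area constraint $\area(N)=4\pi(g-1)$. This is exactly the point where the arithmetic constructions of surfaces with large homological systole must be invoked and adapted: one needs a genus-$p$ hyperbolic surface carrying $n\sim g$ tiny funnels whose ``closed-up'' homology has controlled lower length, and the range of $p(g)$ one can achieve this way is what dictates how fast $\eta(g)/g$ may tend to $1$. Estimating the collar widths, checking that drilling does not shorten the relevant geodesics of the arithmetic core, and verifying admissibility of the genera are then routine.
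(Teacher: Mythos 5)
Your overall architecture is the same as the paper's (a core of comparatively small genus whose ``genus directions'' are metrically long, plus many handles carrying short homology, plus a collar/separating-curve argument showing any class outside the span of the short handle curves is long), and your reduction via $\rho_L$ and the diagonal choice of $(\epsilon_k,A_k)$ is sound. But the proof has a genuine gap, and it sits exactly where you place the ``main obstacle'': you never construct the core $\Sigma$. You require a surface of genus $p\ge\epsilon g$ with $n$ tiny boundary circles such that every loop whose class is not a combination of boundary classes has length at least $A\log g$. Since $p\ge \epsilon g$ gives $\log g\le \log p+\log(1/\epsilon)$, this demands a relative homological systole of order $A\log p$ with $A$ arbitrarily large, whereas the arithmetic surfaces you start from only have homological systole $\tfrac43\log p$ (and by Gromov's bound no closed hyperbolic surface does better than $C\log p$). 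So ``suitably drilling'' must \emph{multiply} the lengths of the core's homologically nontrivial geodesics by an arbitrarily large factor; monotonicity of lengths under adding boundary gives no quantitative lower bound of this kind, and you supply no mechanism for it. This missing mechanism is precisely the content of the paper's construction: one takes the $(2,3,7)$-triangulation of a Hurwitz surface of genus $h$ with homological systole $\ge\tfrac43\log h$, subdivides each triangle into $m^2$ triangles of size $\tfrac1m$, and replaces each small triangle (resp.\ each heptagon around an old vertex) by a hyperbolic three-holed triangle $X_3$ (resp.\ seven-holed heptagon $X_7$) of fixed side length $\ell$ with tiny boundary circles. The distance comparison of Lemma~\ref{lem:logh} (${\rm dist}_{\partial_0 X_3\times\partial_0 X_3}\ge Km\,{\rm dist}_{\partial\Delta\times\partial\Delta}$) shows the core metric is effectively dilated by a factor $\sim Km$, at the cost of adding $21(h-1)(m^2-2)$ handles; hence $\sys_{k+1}(M)\ge\tfrac43 Km\log h$ while the short classes make up a fraction $1-\Theta(1/m^2)$ of the homology.

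A secondary point: the paper's construction shows that for a given deficiency $\epsilon$ the achievable constant $A$ is only of order $1/\sqrt{\epsilon}$ (in the proof of Theorem~\ref{theo:cex}, $\varepsilon\le\frac{1}{100(\frac{3C}{2K}+1)^2}$ and $m\approx\frac{1}{10\sqrt{\varepsilon}}$), so your cleaner-sounding input ``for every $\epsilon$ and every $A$ there is $N_{\epsilon,A}$'' is not what the construction delivers; your diagonalization survives because you allow $\epsilon_k$ to decrease as fast as needed relative to $A_k$, but this coupling should be made explicit. The homological bookkeeping at the end (loops of a minimal homology basis avoiding the separating waists, short loops in a handle being multiples of its core curve, loops crossing a huge collar being long) is essentially the paper's argument and is fixable in your setting; the unproved existence of the dilated core is not a routine matter but the heart of the theorem.
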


\medskip

Before proceeding to the proof of this theorem, we will need the following constructions. \\

All hyperbolic polygons will be geodesics and convex.
We will say that a hyperbolic polygon is symmetric if it admits an axial symmetry. \\

Fix $\ell > 0$ such that $\cosh(\ell) > 7$ and let $L,L'>0$ be large enough (to be determined later). \\

\noindent{\it Construction of a symmetric hexagon.}
With our choice of~$\ell$, there is no hyperbolic triangle with a basis of length~$\ell$ making an angle greater or equal to~$\pi/6$ with the other two sides.
Therefore, there exists a symmetric hyperbolic hexagon $H_{\pi/6,L}$ (resp. $H_{\pi/3,L}$) with a basis of length~$\ell$ forming an angle of~$\pi/6$ (resp.~$\pi/3$) with its adjacent sides such that all its other angles are right and the side opposite to the basis is of length~$L$, \cf~Figure~\ref{fig2}.
Note that the length of the two sides adjacent to the side of length~$L$ goes to zero when $L$ tends to infinity. \\

\begin{figure}[h]
\leavevmode \SetLabels
\L(.5*.78) $L$\\
\L(.5*.09) $\ell$\\
\endSetLabels
\begin{center}
\AffixLabels{\centerline{\epsfig{file =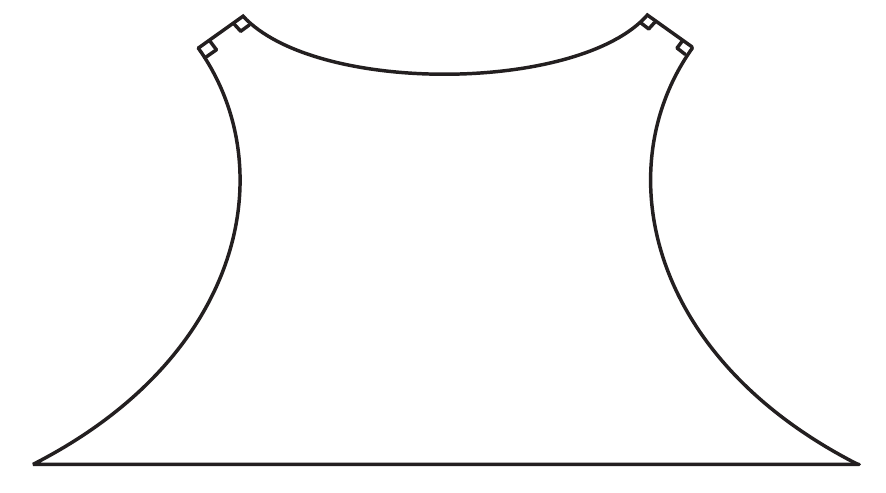,width=10cm,angle=0}}}
\end{center}
\caption{The schematics for $H_{\pi/6,L}$ and $H_{\pi/3,L}$}\label{fig2}
\end{figure}

\noindent{\it Construction of a $3$-holed triangle.}
Consider the hyperbolic right-angled hexagon~$H_{L}$ with three non-adjacent sides of length~$L$.
Note that the lengths of the other three sides go to zero when $L$ tends to infinity.
By gluing three copies of~$H_{\pi/6,L}$ to~$H_{L}$ along the sides of length~$L$, we obtain a three-holed triangle~$X_{3}$ with angles measuring~$\pi/3$ and sides of length~$\ell$, where the three geodesic boundary components can be made arbitrarily short by taking $L$ large enough, \cf~Figure~\ref{fig3}.
We will assume that the geodesic boundary components of~$X_{3}$ are short enough to ensure that the widths of theirs collars are greater than~$e^{g}$, with $g$ to be determined later. \\

\begin{figure}[h]
\leavevmode \SetLabels
\L(.445*.4) $L$\\
\L(.53*.4) $L$\\
\L(.49*.23) $L$\\
\L(.377*.48) $\ell$\\
\L(.5*.07) $\ell$\\
\L(.61*.48) $\ell$\\
\endSetLabels
\begin{center}
\AffixLabels{\centerline{\epsfig{file =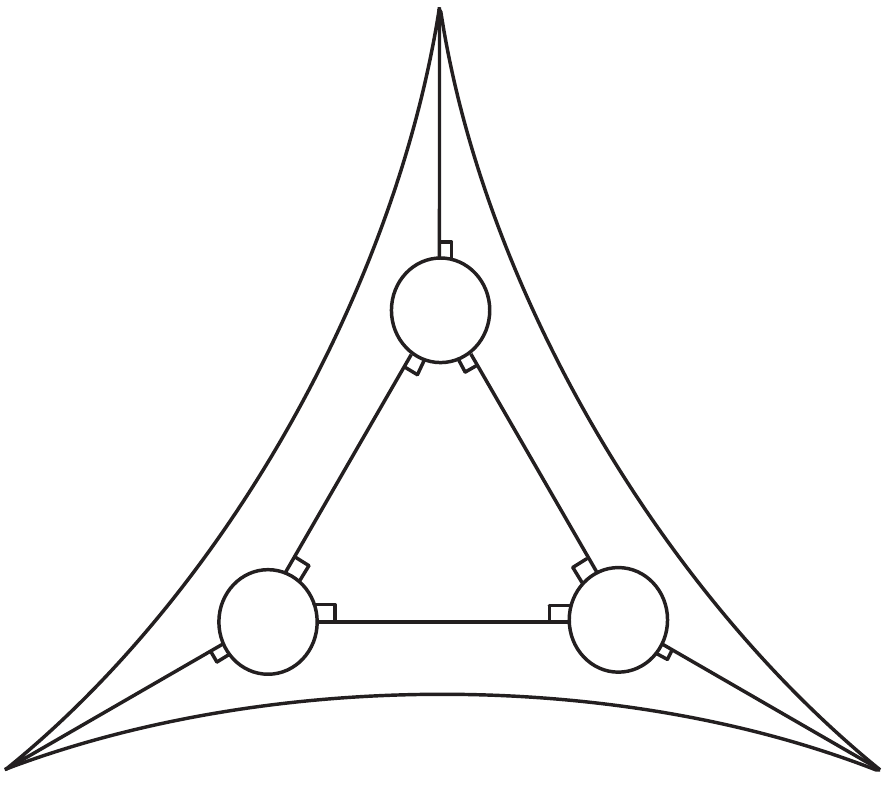,width=10cm,angle=0}}}
\end{center}
\caption{} \label{fig3}
\end{figure}

\noindent{\it Construction of a $7$-holed heptagon.} There exists a symmetric hyperbolic pentagon~$P_{L'}$ with all its angles right except for one measuring~$2\pi/7$ such that the length of the side opposite to the non-right angle is equal to~$L'$.
As previously, the length of the two sides adjacent to the side of length~$L'$ goes to zero when $L'$ tends to infinity.
By gluing seven copies of~$P_{L'}$ around their vertex with a non-right angle, we obtain a hyperbolic right-angled $14$-gon, \cf~Figure~\ref{fig4}.
Now, we paste along the sides of length~$L'$ seven copies of~$H_{\pi/3,L'}$.
We obtain a $7$-holed heptagon~$X_{7}$ with angles measuring~$2\pi/3$ and sides of length~$\ell$, where the seven geodesic boundary components can be made arbitrarily short by taking $L'$ large enough, \cf~Figure~\ref{fig4}.
We will assume that the geodesic boundary components of~$X_{7}$ are as short as the geodesic boundary components of~$X_{3}$, which guarantees that the widths of theirs collars are also greater than~$e^{g}$, with $g$ to be determined later. \\

\begin{figure}[h]
\leavevmode \SetLabels
\L(.42*.88) $\ell$\\
\L(.44*.74) $L'$\\
\L(.485*.25) $P_{L'}$\\
\endSetLabels
\begin{center}
\AffixLabels{\centerline{\epsfig{file =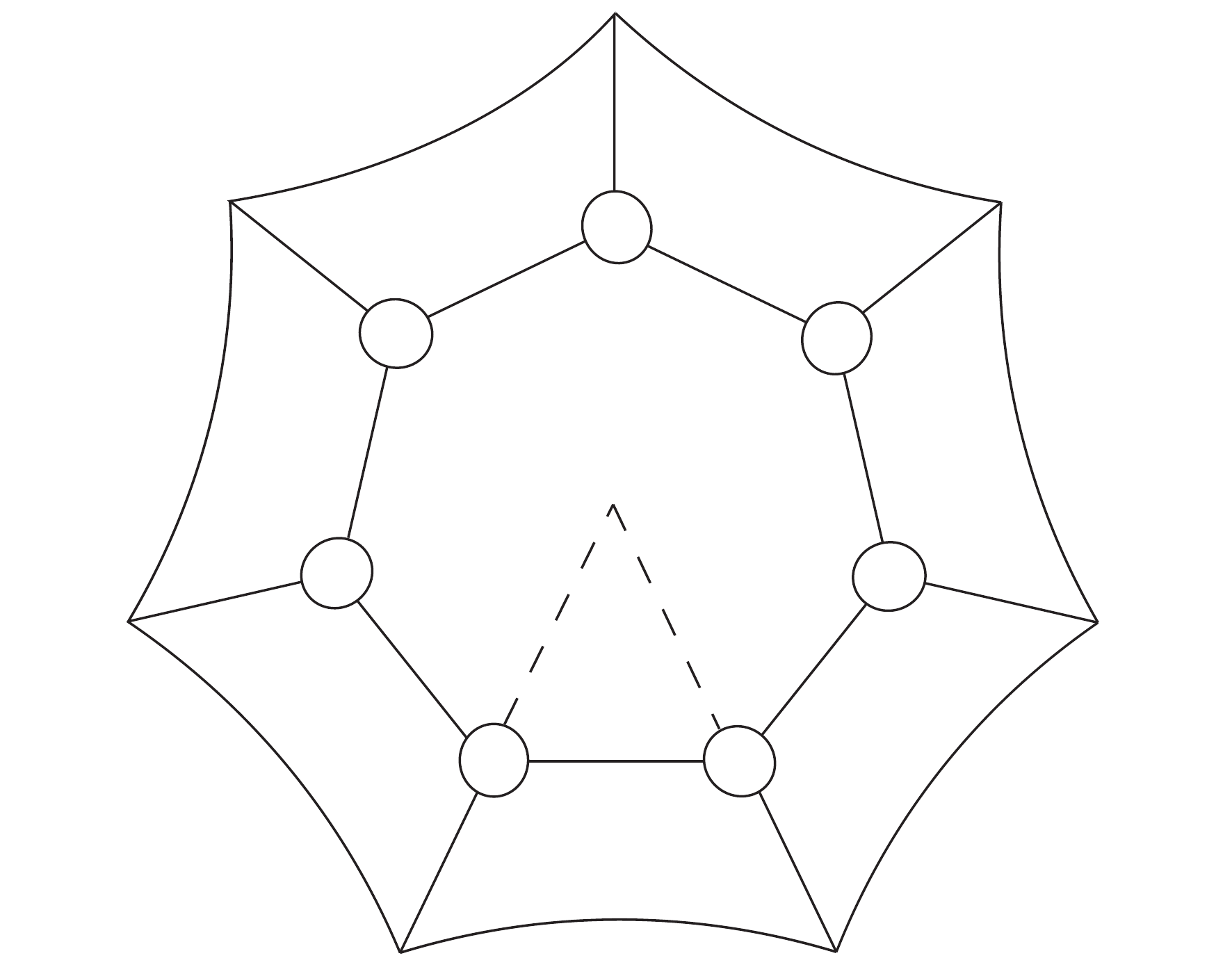,width=8cm,angle=0}}}
\end{center}
\caption{The $7$-holed heptagon} \label{fig4}
\end{figure}

\noindent{\it Hurwitz surfaces with large systole.}
Generalizing the work of Buser-Sarnak~\cite{BS94}, Katz-Schaps-Vishne~\cite{KSV07} constructed a family of hyperbolic surfaces~$N$ defined as a principal congruence tower of Hurwitz surfaces with growing genus~$h$ and with homological systole at least $\frac{4}{3} \log(h)$.
Since Hurwitz surfaces are $(2,3,7)$-triangle surfaces, they admit a triangulation~$\T$ made of copies of the hyperbolic equilateral triangle with angles~$2\pi/7$.
The area of this triangle equals~$2\pi/7$.
Therefore, the triangulation~$\T$ of~$N$ is formed of $14(h-1)$ triangles and has $6(h-1)$ vertices.
Remark that not every integer~$h$ can be attained as the genus of a Hurwitz surface with homological systole at least~$\frac{4}{3} \log(h)$.
Still, this is true for infinitely many $h$'s. \\

\noindent{\it Adding handles.}
In order to describe our construction, it is more convenient to replace the previous hyperbolic equilateral triangles of~$\T$ with Euclidean equilateral triangles of unit side length, which gives rise to a piecewise flat surface $N_0 \simeq N$.
For every of these Euclidean triangles~$\Delta$, consider a subdivision of each of its sides into $m$ segments of equal length.
The lines of~$\Delta$ parallel to the sides of the triangle and passing through the points of the subdivision decompose~$\Delta$ into~$m^{2}$ Euclidean equilateral triangles of size~$\frac{1}{m}$.
These small triangles define a new (finer) triangulation~$\T'$ of~$N$ with exactly seven triangles around each vertex of the original triangulation~$\T$ and exactly six triangles around the new vertices.
Note that the new triangulation~$\T'$ is formed of $14(h-1)m^2$ triangles.
Now, replace each heptagon (with a conical singularity in its center) formed of the seven small triangles of~$\T'$ around the vertices of the original triangulation~$\T$ by a copy of the hyperbolic $7$-holed heptagon~$X_{7}$ (of side length~$\ell$).
Replace also the other small triangles of~$\T'$ by a copy of the hyperbolic three-holed triangle~$X_{3}$ (of side length~$\ell$).
The conditions on the angles of $X_{3}$ and~$X_{7}$ imply that the resulting surface~$M'$ is a compact hyperbolic surface with geodesic boundary components, of signature~$(h,42(h-1)(m^{2}-2))$.
Note that the lengths of the nonboundary closed geodesics of~$M'$ are bounded away from zero by a positive constant $\kappa=\kappa(\ell)$ depending only on~$\ell$.
By gluing the geodesic boundary components pairwise, which all have the same length, we obtain a closed hyperbolic surface~$M$ of genus~$g=h+21(h-1)(m^{2}-2)$.

\begin{remark} \label{rem:punc}
It is not possible to use single-punctured hyperbolic polygons in our construction and still have the required conditions on their angles and the lengths of their sides to obtain a smooth closed hyperbolic surface at the end.
Note also that the combinatorial structure of Hurwitz surfaces, and more generally triangle surfaces, makes the description of our surfaces simple.
\end{remark}

The following lemma features the main property of our surfaces.

\begin{lemma} \label{lem:logh}
Let $k=21(h-1)(m^{2}-2)$.
The $(k+1)$-th homological systole of~$M$ is large.
More precisely, there exists a universal constant~$K \in (0,1)$ such that
$$
\sys_{k+1}(M) \geq \frac{4}{3} \, Km \, \log(h).
$$
\end{lemma}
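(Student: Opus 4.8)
The plan is to prove that every homologically nontrivial loop of $M$ of length less than $\frac{4}{3}Km\log(h)$ represents a class in the subgroup $W\subset H_1(M;\Z)$ generated by the homology classes of the short geodesics $\delta_1,\dots,\delta_k$ produced by the pairwise gluing of the $42(h-1)(m^2-2)=2k$ boundary components of $M'$. Since $W$ has rank at most $k$, no $k+1$ homologically independent loops of $M$ can all be shorter than $\frac{4}{3}Km\log(h)$, which is exactly the asserted lower bound for $\sys_{k+1}(M)$.

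First I would set up the homological picture. Cutting $M$ along $\delta_1,\dots,\delta_k$ recovers $M'$, and capping the $2k$ boundary circles $\delta_j^{\pm}$ of $M'$ by disks yields a closed genus~$h$ surface naturally homeomorphic to $N$, hence to the piecewise flat model $N_0$. By Mayer--Vietoris the capping inclusion induces a surjection $\iota_*\colon H_1(M';\Z)\to H_1(N_0;\Z)$ whose kernel is generated by the classes $[\delta_j^{\pm}]$, while the gluing map $q\colon M'\to M$ sends each $[\delta_j^{\pm}]$ to $\pm[\delta_j]$, so $q_*(\ker\iota_*)=W$. Next I would realise $\iota_*$ by a $\tfrac{C(\ell)}{m}$-Lipschitz map $\Phi\colon M'\to N_0$: on each copy of $X_3$ (resp.\ $X_7$) sitting inside $M'$, collapse its short boundary geodesics and map the piece onto the corresponding flat triangle (resp.\ union of seven flat triangles, of diameter $\sim 1/m$) of $N_0$, sending the sides of length $\ell$ affinely onto the corresponding edges of $\mathcal T'$. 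These choices agree along shared edges and glue to a global map whose Lipschitz constant is at most $C(\ell)/m$ because $X_3$ and $X_7$ are fixed compact hyperbolic pieces; since $\Phi$ is homotopic to the capping inclusion, $\Phi_*=\iota_*$. Finally, as $N_0$ is bi-Lipschitz to the Hurwitz surface $N$ with a universal constant (both are assembled from a single fixed regular triangle, flat versus hyperbolic), the Katz--Schaps--Vishne estimate $\sys_H(N)\ge\frac{4}{3}\log(h)$ gives $\sys_H(N_0)\ge c_0\,\frac{4}{3}\log(h)$ for a universal $c_0>0$. Set $K=\tfrac{c_0}{2C(\ell)}$, replacing it by $\tfrac12$ if this exceeds $1$; since $\ell$ was fixed once and for all at the start of the section, $K$ is a universal constant in $(0,1)$.

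Now let $\gamma$ be a homologically nontrivial loop of $M$ with $\length(\gamma)<\frac{4}{3}Km\log(h)$; note this is far below $e^{g}$, hence below the width of each of the (pairwise disjoint) collars $U_j$ of the $\delta_j$. If $\gamma$ meets some $\delta_j$, then a round trip out of $U_j$ would cost at least twice its width, so $\gamma$ is entirely contained in $U_j$; as $U_j$ is an annulus, $[\gamma]$ is a multiple of $[\delta_j]$, hence lies in $W$. If instead $\gamma$ avoids all the $\delta_j$, it lifts to a loop of the same length in $M'$, and $\Phi(\gamma)$ is a loop of $N_0$ of length at most $\frac{C(\ell)}{m}\length(\gamma)<\frac{2}{3}c_0\log(h)<\sys_H(N_0)$, hence homologically trivial in $N_0$; therefore $\iota_*[\gamma]=\Phi_*[\gamma]=0$, so $[\gamma]\in q_*(\ker\iota_*)=W$. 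In either case $[\gamma]\in W$, which finishes the argument as in the first paragraph.

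The main obstacle is the construction of $\Phi$ together with the bookkeeping of constants: one must ensure that its Lipschitz constant depends only on the fixed parameter $\ell$, and not on $h$ or $m$, since this is precisely what forces the final constant $K$ to be universal. A secondary point requiring care is the bi-Lipschitz comparison of $N_0$ with $N$, and the correct normalisation of triangle side lengths, so that the $\log(h)$ lower bound on the homological systole survives, with only a universal multiplicative loss, when passing from $N$ to $N_0$.
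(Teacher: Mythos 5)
Your proof follows essentially the same route as the paper's: the paper too disposes of any short loop meeting the gluing curves via their collars of width $e^{g}$, compares what remains of $M$ with the model $N_{0}\simeq N$ at scale $1/m$, and then invokes the Katz--Schaps--Vishne bound $\sys_H(N)\geq \frac{4}{3}\log(h)$. Your subgroup $W$ and the Mayer--Vietoris bookkeeping simply make explicit the homological step the paper leaves implicit (that a short loop avoiding, and homologically independent from, the gluing curves would project to a homologically nontrivial loop of $N_{0}$), and this rank-$k$ formulation is if anything cleaner than the original.

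The weak point is your justification of the key estimate, namely that the collapse map $\Phi$ is $\frac{C(\ell)}{m}$-Lipschitz ``because $X_{3}$ and $X_{7}$ are fixed compact hyperbolic pieces.'' They are not fixed: in the construction the parameters $L,L'$ are chosen so that the boundary geodesics of $X_{3}$ and $X_{7}$ are short enough for their collars to have width greater than $e^{g}$, so the pieces degenerate (arbitrarily long thin horns near the holes) as $h$ and $m$ grow, and compactness of a single piece gives nothing uniform. What you need is a Lipschitz (or boundary-to-boundary distance) estimate that is uniform as the boundary geodesics shrink to zero; this is exactly the content of the paper's own (also unproved) assertion that, no matter how short the boundary geodesics are, distances between points of the non-geodesic boundary of $X_{3}$ are at least $K$ times the corresponding Euclidean distances. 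The uniform statement is true --- for instance, as $L\to\infty$ the pieces $X_{3}(L)$ converge geometrically to the triangle with sides $\ell$, angles $\pi/3$ and three cusps, and the horns are collapsed by $\Phi$ anyway --- but it requires an argument (geometric convergence or explicit hyperbolic trigonometry), and since it is precisely what makes $K$ independent of $h$ and $m$, it cannot be waved through with the ``fixed compact pieces'' phrase. With that justification repaired, your proof is complete and coincides in substance with the paper's.
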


\begin{proof}
Let us start with some distance estimates.
No matter how small the geodesic boundary components of~$X_3$ are, the distance between two points of its non-geodesic boundary component~$\partial_0 X_3$ is greater or equal to $K$ times the distance between the corresponding two points in the boundary~$\partial \Delta$ of a Euclidean triangle of unit side length.
That is,
$$
{\rm dist}_{\partial_0 X_3 \times \partial_0 X_3} \geq K \, {\rm dist}_{\partial \Delta \times \partial \Delta},
$$
where $K$ is a universal constant.
If, instead of a Euclidean triangle of unit side length, we consider a small Euclidean triangle of size~$\frac{1}{m}$, we have to change $K$ into~$Km$ in the previous bound.
The same inequality holds, albeit with a different value of~$K$, if one switches $X_3$ for~$X_7$.
Here, of course, we should replace the small Euclidean triangle with the singular Euclidean heptagon that $X_7$ replaces in the construction of~$M$. \\

Now, let us estimate the $(k+1)$-th homological systole of~$N$.
By construction, there are $k$ short disjoint closed geodesics $\alpha_1,\ldots,\alpha_k$ of the same length which admit a collar in~$M$ of width at least~$e^g$.
Furthermore, the loops~$\alpha_i$ are homologically independent in~$M$.

Let $\gamma$ be a geodesic loop in~$M$ homologically independent from the~$\alpha_i$.
We can suppose that $\gamma$ does not intersect the loops~$\alpha_i$, otherwise its length would be at least~$e^g$ and we would be done.
Thus, if the trajectory of~$\gamma$ enters into a copy of $X_3$ or~$X_7$ (through a side of length~$\ell$), it will leave it through a side of length~$\ell$.
Therefore, using the previous distance estimates, the curve~$\gamma$ induces a homotopically nontrivial loop in~$N_0 \simeq N$ of length at most
$$
\frac{1}{Km} \, \length(\gamma).
$$
Since $N$ and $N_0$ are bilipschitz equivalent, it does not matter with respect to which metric we measure the lengths; the only effect might be on the constant~$K$.
Now, by construction the homological systole of~$N$ is at least~$\frac{4}{3} \log(h)$, we conclude that
$$
\length(\gamma) \geq \frac{4}{3} \, Km \, \log(h).
$$
\end{proof}

With this contruction, we can prove Theorem~\ref{theo:cex}.

\begin{proof}[Proof of Theorem~\ref{theo:cex}]
Given $\eta$ as in the theorem, let us show that for every $C>1$, there exist infinitely many hyperbolic surfaces~$M_{g}$ with pairwise distinct genus~$g$ such that
\begin{equation} \label{eq:C}
\frac{\sys_{\eta(g)}(M_{g})}{\log(g)} \geq C.
\end{equation}
The surfaces~$M_g$ have already been constructed in the discussion following Theorem~\ref{theo:cex}.
We will simply set the parameters $h$ and $m$ on which they depend so that the inequality~\eqref{eq:C} holds. \\

Let $\varepsilon > 0$ such that $\varepsilon \leq \frac{1}{100(\frac{3C}{2K}+1)^{2}}$ and $m = E(\frac{1}{10 \sqrt{\varepsilon}})$.
Note that $\varepsilon \leq \frac{1}{100}$, $m \geq \frac{3C}{2K}$ and $\varepsilon m^2 \leq \frac{1}{100}$.
By assumption on~$\eta$, there exists an integer~$g_{0} \geq 100$ such that for every $g \geq g_{0}$, we have $\eta(g) \geq (1-\varepsilon)g$.
Now, we can set $h \geq \max \{ 21(m^{2}-2),g_{0} \}$ for which there exists a genus~$h$ Hurwitz surface~$N$ with homological systole at least $\frac{4}{3} \log(h)$.
Remark that there are infinitely many choices for~$h$.

From the construction following Theorem~\ref{theo:cex}, we obtain infinitely many hyperbolic surfaces~$M_{g}$ with pairwise distinct genus~$g=h+21(h-1)(m^{2}-2)$.
Now, from our choice of parameters, we have 
$$
\eta(g) \geq (1-\varepsilon) g \geq 21(h-1)(m^{2}-2)+1.
$$
Combined with Lemma~\ref{lem:logh}, this implies that 
$$
\sys_{\eta(g)}(M_{g}) \geq \frac{4}{3} \, Km \, \log(h).
$$
From the expression of~$g$ and our choice of parameters, we derive
$$
\log(g) \leq \log(h) + \log(21(m^{2}-2)) \leq 2 \, \log(h) \leq \frac{4}{3} \, \frac{Km}{C} \, \log(h).
$$
Therefore, 
$$
\sys_{\eta(g)}(M_{g}) \geq C \, \log(g).
$$
Hence the theorem.
\end{proof}

\begin{remark}
Note that the surfaces constructed in the previous theorem are hyperbolic, not merely Riemannian.
The construction of {\it nonhyperbolic} Riemannian surfaces satisfying the conclusion of Theorem~\ref{theo:cex} is easier to carry out as there is no constraint imposed by the hyperbolic structure anymore.
We can start with a Buser-Sarnak sequence of surfaces and attach long thin handles to these surfaces.
Then we can argue as in the proof of the theorem.
\end{remark}

\forget
\begin{proposition} 
For any $C>0$, the quantity
\begin{equation}\label{eqn:contra}
C \log(g) \sqrt{\frac{\area(S)}{g}}
\end{equation}
cannot be a universal upper bound on the lengths of $f(g)$ homologically distinct curves where $f$ is a function of $g$ with $\lim_{g\to \infty} \frac{f(g)}{g}=1$.
\end{proposition}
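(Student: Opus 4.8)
The plan is to recognize this proposition as a packaging of Theorem~\ref{theo:cex} (equivalently Theorem~\ref{theo:B}) and to prove it by the construction of Section~\ref{sec:cex}. First I would reduce to a pure statement about the $\eta(g)$-th homological systole: the surfaces $S=M_g$ I shall use are hyperbolic, so $\area(S)=4\pi(g-1)$ and hence $\sqrt{\area(S)/g}<2\sqrt\pi$; consequently, to refute $C\log(g)\sqrt{\area(S)/g}$ as a universal bound for the lengths of $f(g)$ homologically independent loops, it suffices to produce closed hyperbolic surfaces $M_g$ of unbounded genus with $\sys_{f(g)}(M_g)>2\sqrt\pi\,C\log g$, i.e.\ to run the construction with $C$ replaced by $C':=2\sqrt\pi\,C$.

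The construction itself is the one described before Theorem~\ref{theo:cex}. Fix $\ell$ with $\cosh\ell>7$ and build, by gluing symmetric right-angled hyperbolic polygons along their long sides, a three-holed triangle $X_3$ with all angles $\pi/3$ and outer sides of length $\ell$, and a seven-holed heptagon $X_7$ with all angles $2\pi/3$ and outer sides of length $\ell$; the lengths of the geodesic boundary components of $X_3$ and $X_7$ can be taken arbitrarily small, so that each boundary geodesic has an embedded collar of width $>e^{g}$. Now take a Hurwitz surface $N$ of genus $h$ from the Buser--Sarnak / Katz--Schaps--Vishne tower, with $\sys_H(N)\ge\tfrac43\log h$; it is tiled by $14(h-1)$ copies of the $(2,3,7)$-equilateral triangle, seven to a vertex. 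Pass to its piecewise-Euclidean model $N_0\simeq N$, subdivide each unit triangle into $m^2$ triangles of size $1/m$, replace each star of seven small triangles around an old vertex by a copy of $X_7$ and every remaining small triangle by a copy of $X_3$, and glue the resulting (equal, tiny) boundary geodesics in pairs. The angle conditions guarantee a smooth closed hyperbolic surface $M=M_g$ of genus $g=h+21(h-1)(m^2-2)$; write $k=21(h-1)(m^2-2)$.

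The heart of the matter — Lemma~\ref{lem:logh} — is the systolic estimate for $M_g$. The gluing curves give $k$ disjoint, homologically independent geodesics $\alpha_1,\dots,\alpha_k$, each with a collar of width $>e^{g}$, so these $k$ loops are short. The claim is that nothing else is short: if $\gamma$ is a geodesic loop homologically independent from the $\alpha_i$, then either $\gamma$ meets some $\alpha_i$, forcing $\length(\gamma)>e^{g}$, or $\gamma$ enters and leaves each fat piece $X_3$, $X_7$ only through sides of length $\ell$, and then, by the key metric comparison (the non-geodesic outer boundary of each fat piece is at least $Km$ times longer, in the path metric, than the boundary of the small Euclidean cell it replaces, with $K$ a universal constant), $\gamma$ projects to a homotopically nontrivial loop of $N_0\simeq N$ of length at most $\tfrac1{Km}\length(\gamma)$; since $\sys_H(N)\ge\tfrac43\log h$ and $N\simeq N_0$ are bilipschitz (changing $K$ only), this gives $\length(\gamma)\ge\tfrac43 Km\log h$. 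Hence $\sys_{k+1}(M_g)\ge\tfrac43 Km\log h$.

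Finally I would choose the parameters. Given $C'$, pick $m$ large enough (in terms of $C'$ and $K$) that $\tfrac43 Km\ge 2C'$ and that $\log\!\bigl(h+21(h-1)(m^2-2)\bigr)\le 2\log h$ for all large $h$; pick $\varepsilon>0$ with $\varepsilon m^2$ small (so that $21(h-1)(m^2-2)+1\le(1-\varepsilon)\bigl(h+21(h-1)(m^2-2)\bigr)$ for large $h$); using $f(g)/g\to1$, fix $g_0$ with $f(g)\ge(1-\varepsilon)g$ for $g\ge g_0$; then take $h\ge g_0$ large and admitting such a Hurwitz surface — there are infinitely many. For the resulting $M_g$, with $g=h+k$ and $k=21(h-1)(m^2-2)$, one has $f(g)\ge(1-\varepsilon)g\ge k+1$, so
\[
\sys_{f(g)}(M_g)\ \ge\ \sys_{k+1}(M_g)\ \ge\ \tfrac43 Km\log h\ \ge\ 2C'\log h\ \ge\ C'\log g .
\]
Letting $g\to\infty$ through this sequence refutes the bound, which is what we wanted. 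The main obstacle is Lemma~\ref{lem:logh}: one must force the collars of the gluing geodesics to be astronomically large while simultaneously controlling, by a single factor $Km$, the metric distortion incurred by routing a curve through $X_3$ and $X_7$, and check that the bilipschitz identification $N\simeq N_0$ affects the lower bound $\tfrac43\log h$ only through the universal constant; the secondary point needing care is the angle bookkeeping on $X_3$ and $X_7$ that makes the glued surface genuinely smooth and hyperbolic.
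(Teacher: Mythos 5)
Your proof is correct: the reduction to exhibiting hyperbolic surfaces with $\sys_{f(g)}(M_g)>2\sqrt{\pi}\,C\log g$ is sound, Lemma~\ref{lem:logh} is invoked accurately, and the parameter bookkeeping ($m$ chosen so $\tfrac{4}{3}Km\ge 2C'$, then $\varepsilon$ with $\varepsilon m^2$ small, then $h$ large enough that $f(g)\ge(1-\varepsilon)g\ge k+1$ and $\log g\le 2\log h$) closes the argument. However, this is not the route the paper takes for this particular proposition. The paper's own proof is much more elementary: start from a Buser--Sarnak surface of genus $h$ with systole at least $\tfrac{4}{3}\log h$, remove $2k$ small disks and glue in $k$ long, thin flat cylinders of negligible total area, so that the $k$ cylinder core curves are short and homologically independent while any further independent curve either crosses a cylinder (and is enormously long) or stays in the Buser--Sarnak part and has length at least $\tfrac{4}{3}\log h=\tfrac{4}{3}\log(\lambda g)$; choosing $\lambda<\frac{4}{9\pi C^{2}}$, the small area ($\approx 4\pi\lambda g$) makes $C\log(g)\sqrt{\area(S)/g}$ itself small and the bound is beaten. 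That argument needs no Hurwitz triangulation, no $X_3$/$X_7$ polygons and no distance-comparison lemma, but its counterexamples are Riemannian and decidedly non-hyperbolic (long flat tubes), and for a fixed $C$ it only defeats proportions up to $1-\frac{4}{9\pi C^{2}}$, which suffices here because $C$ is fixed first. Your heavier route, by contrast, re-proves the sharper Theorem~\ref{theo:cex}/\ref{theo:B}: the counterexamples are genuinely hyperbolic with normalized area, at the cost of the full construction of Section~\ref{sec:cex} and its key metric-distortion estimate (Lemma~\ref{lem:logh}). The paper itself points out this trade-off in the remark following Theorem~\ref{theo:cex}, where the cylinder trick is described as the easy non-hyperbolic alternative.
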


\begin{proof}

Suppose by contradiction that this was the case, i.e., that there exists a bound given by equation \ref{eqn:contra} on the length of $f(g)$ homologically independent curves with $\lim_{g\to \infty} \frac{f(g)}{g}=1$. In particular, this implies that for any $\varepsilon>0$, and sufficienly high $g$, equation \ref{eqn:contra} holds for at least $ag=(1-\varepsilon)g$ curves. 

We begin with a Buser-Sarnak sequence of surfaces. More precisely, a family of hyperbolic surfaces with growing genus $h$ and with systole growth $\frac{4}{3}\log(h)$. We begin by cutting out $2k$ disjoint small disks of circumference $\varepsilon$ (for some very small $\varepsilon$). We group the $\varepsilon$ boundary curves into pairs, and we attach $k$ thin Euclidean cylinders of boundary length $\varepsilon$ and of length $L$. We choose $L$ so that any curve that goes through a cylinder is extremely long, say length $e^h$. We choose $\varepsilon$ so that the total area of the cylinders is less than some constant, say $4\pi$. As such, the surface obtained has total area at most $4\pi(h-1) + 4\pi=4\pi h$, and genus $g=h+k$.

Notice that there are $k$ short homotopically and homologically distinct curves on the surface $\sigma_1,\hdots,\sigma_k$, those given by the parallel curves to the cylinders, and the remaining curves must be of length greater or equal to the systole of the original surface, i.e., of length 
at least $\frac{4}{3}\log(h) = \frac{4}{3}\log(g-k)$. As such we have for any curve $\gamma$ homologically independent from $\sigma_1,\hdots,\sigma_k$:
$$
\ell(\gamma)\geq \frac{4}{3}\log(g-k).
$$
Because of what precedes, we know we need to consider $k =(1- \lambda) g$ for $\lambda \in ]0,\frac{1}{2}[$. Thus
$$
\ell(\gamma)\geq \frac{4}{3}\log(\lambda g).
$$
Consider $\lambda$ such that $\lambda< \frac{4}{9\pi C^2}$ (we should be thinking of large $C$ so this quantity is less than $1$). Thus:
$$
\frac{4}{3}-2\sqrt{\pi}C \sqrt{\lambda}>0
$$
and for sufficiently large $g$, we have
$$
\left(\frac{4}{3}-2\sqrt{\pi}C \sqrt{\lambda}\right)\log(g)>\frac{4}{3}\log(\lambda^{-1}).
$$
Now, as the area of our surface is at most $4\pi h = 4\pi \lambda g$, we can conclude that
$$
\ell(\gamma)\geq \frac{4}{3}\log(\lambda g)>2\sqrt{\pi}C \log(g) \sqrt{\frac{\lambda g}{g}}=C \log(g) \sqrt{\frac{4\pi\lambda g}{g}}>C\log(g) \sqrt{\frac{\area(S)}{g}}.
$$
Remark that if the inequality holds for $a g$ curves then 
$$
a\leq 1- \frac{4}{9\pi C^2}.
$$
\end{proof}

Remark that we have not found a contradiction to the possibility that for any $a<1$, there exists a $C$ sufficiently large such that the desired inequality would hold for $a g$ curves. Also remark that the above examples are not hyperbolic. By adapting the hairy torus argument to ``hairy" Buser-Sarnak surfaces, the above examples can be mirrored by hyperbolic surfaces, but they aren't as easy to construct, and do not achieve better results.
\forgotten

\section{Pants decompositions} \label{sec:pants1}

In this section, we establish bounds on the length of short pants decompositions of genus~$g$ Riemannian surfaces with $n$ marked points. Specifically, we establish bounds using two different measures of length. Classically, one measures the length of a pants decomposition by considering the length of the longest curve in the pants decomposition. This is the point of view taken by P.~Buser in his quantification of Bers' constants for instance. One can also measure the total length of a pants decomposition. We establish news bounds for both measures but our primary goal is to establish bounds on sums of lengths of pants decompositions for surfaces with marked points and hyperelliptic surfaces.

\subsection{Some preliminary bounds}

Recall that the {\it Bers constant} of a Riemannian surface~$M$, denoted by~$\BB(M)$, is the length of a shortest pants decomposition of~$M$, where the length of a pants decomposition~$\PP$ of $M$ is defined as
\begin{equation} \label{eq:pants1}
\length(\PP)=\max_{\gamma \in \PP} \, \length(\gamma).
\end{equation}

In Section \ref{sec:pants2}, we will also consider the sum of the lengths of a pants decomposition~$\PP$ (or total length of~$\PP$) defined as 
\begin{equation} \label{eq:pants2}
\sum_{\gamma \in \PP} \length(\gamma).
\end{equation}

We begin by observing that the two subjects of finding homologically non-trivial short curves, and finding short pants decompositions, are related.

\begin{lemma}\label{lem:bershomology}
Every pants decomposition of a genus $g$ surface contains $g$ homologically independent disjoint loops.
\end{lemma}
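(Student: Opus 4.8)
The plan is to pass to the \emph{dual graph} of the pants decomposition and extract the loops from a spanning tree, using only an Euler--characteristic count and one standard fact about mod-$2$ null-homologous multicurves. Throughout, let $M$ be the (closed) genus-$g$ surface, let $\mathcal P=\{\gamma_1,\dots,\gamma_{3g-3}\}$ be the curves of the pants decomposition, and let $P_1,\dots,P_{2g-2}$ be the pairs of pants obtained by cutting $M$ along $\mathcal P$. Form the graph $G$ with one vertex per $P_j$ and one edge per $\gamma_i$, the edge $\gamma_i$ joining the vertices of the one or two pants to which it is glued (a loop edge if both sides of $\gamma_i$ lie on the same pair of pants). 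Since $M$ is connected, $G$ is connected, so $b_1(G)=(3g-3)-(2g-2)+1=g$. I fix a spanning tree $\mathcal T\subset G$; it uses $2g-3$ edges, leaving exactly $g$ non-tree edges, corresponding to $g$ curves $\gamma_{j_1},\dots,\gamma_{j_g}$ of $\mathcal P$, which are pairwise disjoint since all of $\mathcal P$ is. The claim to be proved is that these $g$ curves are homologically independent.

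\textbf{Key step.} I would argue the claim in $H_1(M;\Z/2\Z)$, which suffices: a $\Z/2\Z$-independence of the mod-$2$ classes forces $\Z$-independence of the integral classes, since any integral relation may be divided down to one with an odd coefficient and then reduced mod $2$. So suppose $\sum_k a_k[\gamma_{j_k}]=0$ in $H_1(M;\Z/2\Z)$ with not all $a_k$ zero, and set $S=\{\,j_k : a_k=1\,\}\neq\varnothing$, a set of non-tree edges, and $C_S=\bigcup_{j\in S}\gamma_j$. Here I invoke the standard fact that a multicurve $C_S$ with $[C_S]=0$ in $H_1(M;\Z/2\Z)$ is the common frontier of a $2$-coloring of the components of $M\setminus C_S$ (equivalently, intersection number with $C_S$ defines a $\Z/2\Z$-valued function on homology classes of loops, which vanishes identically precisely because $[C_S]=0$; the level sets of the resulting locally constant function on $M\setminus C_S$ give the coloring). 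Since $M\setminus C_S$ is obtained from the pants by re-gluing along the curves of $\mathcal P\setminus S$ only, each $P_j$ lies in a single component of $M\setminus C_S$ and hence inherits a color; letting $T$ be the set of pants receiving color $0$, the frontier $C_S$ is exactly the set of curves joining a pants of $T$ to a pants of its complement. In graph terms, $S=\delta_G(T)$ is an edge-cut of $G$ with $\varnothing\neq T\subsetneq V(G)$ (both colors occur because $S\neq\varnothing$).

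\textbf{Conclusion and obstacle.} To finish, observe that a spanning tree meets every nonempty proper edge-cut: $\mathcal T$ is connected and has vertices on both sides of the partition $(T,V(G)\setminus T)$, so it contains an edge of $\delta_G(T)$. Thus $\delta_G(T)$ contains a tree edge, contradicting $S\subseteq\{\text{non-tree edges}\}$. This proves the claim, and hence the lemma. The only non-formal ingredient is the equivalence ``$[C_S]=0$ mod $2$ $\iff$ $S$ is an edge-cut of $G$'', whose forward direction rests on the quoted separation property of mod-$2$ null-homologous multicurves; I expect this to be the step that needs to be written with care, while everything else is bookkeeping on $G$. I would also note in passing that the same computation identifies the kernel of $(\Z/2\Z)^{3g-3}\to H_1(M;\Z/2\Z)$, $e_i\mapsto[\gamma_i]$, with the cut space of $G$, of dimension $|V(G)|-1=2g-3$, so the span of all the $[\gamma_i]$ has dimension exactly $g$; in particular $g$ is also the largest number of homologically independent disjoint loops one can pull out of $\mathcal P$, although only the lower bound is needed here.
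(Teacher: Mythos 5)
Your argument is correct, but it is a genuinely different route from the one in the paper. The paper's proof is a three-line induction on the genus: a pants decomposition of a genus $g$ surface clearly contains a curve whose removal (i.e., cutting) drops the genus to $g-1$, the remaining curves decompose the cut surface, and one iterates until the genus is $0$; the $g$ curves so produced are homologically independent (each is non-separating in the surface cut along the previously chosen ones, which is where the independence comes from, although the paper only says ``clearly''). You instead pass to the dual graph $G$ of the decomposition ($2g-2$ vertices, $3g-3$ edges, so $b_1(G)=g$), take the $g$ curves indexed by the complement of a spanning tree, and rule out a nontrivial relation in $H_1(M;\Z/2\Z)$ by the standard fact that a null-homologous multicurve is the frontier of a $2$-coloring of its complement, which in graph terms would make the offending set of non-tree edges a nonempty edge-cut of $G$ --- impossible, since every such cut meets the spanning tree; the lift from $\Z/2\Z$- to $\Z$-independence via torsion-freeness of $H_1(M;\Z)$ is also fine. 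The trade-off: the paper's induction is shorter and needs no auxiliary machinery, while your dual-graph argument is more explicit about where the independence comes from, is constructive (the curves are read off a spanning tree), and yields the sharper byproduct that the kernel of $(\Z/2\Z)^{3g-3}\to H_1(M;\Z/2\Z)$ is exactly the cut space of $G$, so $g$ is also the maximal number of homologically independent curves one can extract from any pants decomposition. One small caveat: your count $3g-3$, $2g-2$ presumes a closed surface of genus $g\geq 2$, so you should either say so or note that the degenerate cases are vacuous; the paper's inductive phrasing sidesteps this.
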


\begin{proof}
Consider a pants decomposition. Clearly it contains a curve such that once removed, the genus is $g-1$. The remaining curves form a pants decomposition of the surface with the curve removed, so they contain a curve such that once removed the genus is $g-2$, and so forth until the remaining genus is $0$. The $g$ curves are clearly homologically distinct. 
\end{proof}

One could try to find a short pants decomposition by considering disjoint homologically non-trivial loops, and then completing them into a pants decomposition. And indeed, in the case where there are $g$ homologically independent disjoint loops of length at most~$C \, \log(g)$, we can derive a near-optimal bound on the Bers constant of the surface.

\begin{proposition} \label{theo:ell'''}
Let $M$ be a closed orientable hyperbolic surface of genus~$g$ which admits $g$ homologically nontrivial disjoint loops of length at most $C \, \log(g)$.
Then
$$
\BB(M)\leq C' \, \sqrt{g} \, \log(g)
$$
for $C'=46 \, C$. In particular such surfaces satisfy 
$$
\BB(M)\leq C' \, g^{\frac{1}{2}+\varepsilon}
$$
for any positive $\varepsilon$ and large enough genus. 
\end{proposition}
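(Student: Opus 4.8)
The plan is to use the $g$ given loops to cut $M$ into a \emph{planar} hyperbolic surface, to bound the Bers constant of that planar piece, and then to reassemble a short pants decomposition of $M$ from the two pieces of data. First I would replace each of the $g$ disjoint loops by its geodesic representative; disjoint simple loops have disjoint simple geodesic representatives of no greater length, so this gives $g$ disjoint simple closed geodesics $\alpha_1,\dots,\alpha_g$ of length at most $C\log(g)$. In the situation of interest these are homologically independent (they arise from Theorems~\ref{theo:lng} or~\ref{theo:lngr}), so cutting $M$ along them produces a connected hyperbolic surface $X$ of signature $(0,2g)$ — a sphere with $2g$ geodesic boundary components — each boundary component of length at most $C\log(g)$, and with $\area(X)=4\pi(g-1)<4\pi g$ by Gauss--Bonnet. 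A pants decomposition of $M$ that contains $\alpha_1,\dots,\alpha_g$ is exactly the union of $\{\alpha_1,\dots,\alpha_g\}$ with a pants decomposition of $X$ (note $g+(2g-3)=3g-3$, and no pants curve of $X$ becomes peripheral or trivial in $M$). So everything reduces to producing a pants decomposition of $X$ with all curves of length at most $46\,C\,\sqrt{g}\,\log(g)$, since the $\alpha_i$ are far shorter than that.

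The core of the argument is then the following claim: a hyperbolic sphere with $N$ geodesic boundary components, each of length at most $L\ge 1$, has a pants decomposition with all curves of length $\le\kappa\,\sqrt{N}\,L$ for a universal $\kappa$. Applying this with $N=2g$, $L=C\log(g)$, and $\kappa\sqrt2\le 46$ then finishes the proof, and the second assertion is immediate because $\log(g)=o(g^{\varepsilon})$. To prove the claim I would collapse the $N$ boundary geodesics of $X$ to cusps, obtaining a complete finite-area hyperbolic sphere $X_0$ with $N$ cusps and area $2\pi(N-2)$. By the square-root bound on Bers' constants for punctured spheres (\cite{BP09}; alternatively by Proposition~\ref{prop:bers} combined with the diastole estimates of~\cite{BS10}), $X_0$ carries a pants decomposition $\PP_0$ all of whose curves are simple closed geodesics of length $\le\kappa_0\sqrt{N}$. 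Since a simple closed geodesic never enters the cuspidal horoball regions, one can then deform $X_0$ back to $X$ by opening each cusp into a boundary geodesic of length at most $L$: using the collar/length comparison (in the spirit of~\cite{parlier}, already invoked in this paper), the geodesic representative on $X$ of each curve of $\PP_0$ has length at most a factor comparable to $L$ times its length on $X_0$. This produces the required pants decomposition of $X$, and a careful bookkeeping of the constant $\kappa_0$ and of the collar comparison yields the stated value $C'=46\,C$.

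The step I expect to be the main obstacle is precisely this passage from $X_0$ to $X$: one must verify that each curve of $\PP_0$ can be taken disjoint from the thin collars where the surgery is performed (which follows from the collar lemma together with the length bound $\kappa_0\sqrt N$ on $\PP_0$) and, more delicately, quantify exactly how restoring a boundary geodesic of length $\le L$ inflates the length of a geodesic representative, so as to reach the clean multiplicative behaviour in $L$ with an explicit constant. An alternative that sidesteps the cusp detour is to bound directly the diastole of $X$ in terms of $\area(X)$ and of the boundary length and then quote Proposition~\ref{prop:bers}; the required estimates are of the same nature, and either route should deliver the bound $\BB(M)\le 46\,C\,\sqrt{g}\,\log(g)$.
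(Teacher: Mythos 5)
Your first reduction is exactly the paper's: cut $M$ along the (geodesic representatives of the) $g$ disjoint homologically independent loops to obtain a sphere $X$ of signature $(0,2g)$, area $4\pi(g-1)$, with all boundary geodesics of length at most $C\log(g)$, and observe that a short pants decomposition of $X$ together with the $\alpha_i$ gives the desired pants decomposition of $M$. Where you diverge is in how you bound $\BB(X)$, and this is where the gap lies. The paper simply invokes \cite[Theorem~5]{BP09}, which applies \emph{directly} to hyperbolic (indeed Riemannian) spheres with boundary components of bounded length and yields the explicit bound $46\sqrt{2\pi(2g-2)}\sqrt{(C\log(g)/2\pi)^2+1}\le 46\,C\sqrt{g}\log(g)$; no passage through a cusped surface is needed. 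Your route instead pinches the $2g$ boundary geodesics to cusps, applies the punctured-sphere bound to get a pants decomposition $\PP_0$ of length $\lesssim\sqrt{g}$, and then tries to transfer $\PP_0$ back to $X$ with a multiplicative loss of order $L=C\log(g)$. That transfer step is not justified by the tools you cite: the comparison result of \cite{parlier} used elsewhere in the paper is a \emph{monotonicity} statement (lengths of interior simple closed geodesics do not decrease when boundary lengths increase), so it gives $\ell_X(\gamma)\ge\ell_{X_0}(\gamma)$ — the opposite inequality to the one you need — and neither it nor the collar lemma produces a quantitative upper bound of the form $\ell_X(\gamma)\le \kappa\,L\,\ell_{X_0}(\gamma)$ uniform over all curves of $\PP_0$ and all ways of opening $2g$ cusps simultaneously. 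Without such a uniform estimate the core claim ($\BB$ of an $N$-holed sphere with boundary lengths $\le L$ is $\le\kappa\sqrt{N}L$) is exactly what remains to be proved, and your sketch in effect assumes it; in particular the explicit constant $C'=46\,C$ cannot be recovered from "careful bookkeeping" of an unproved comparison.

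Your fallback suggestion (bound the diastole of $X$ and use Proposition~\ref{prop:bers} / \cite{BS10}) is closer to a workable alternative — the paper itself notes that \cite{BS10} gives the result with a worse constant — but as stated Proposition~\ref{prop:bers} concerns closed surfaces with marked points, not surfaces with long geodesic boundary, so one must either use the boundary version of the diastolic inequality or cap off the boundary and control how the pants curves interact with the caps; this is not carried out, and again it would not produce $46\,C$. The cleanest fix is to do what the paper does: quote \cite[Theorem~5]{BP09} for the $2g$-holed sphere directly.
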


Recall that there exist examples of closed hyperbolic surfaces of genus $g$ with Bers' constant at least $\sqrt{6g}-2$, and that it is conjectured that it cannot be substantially improved (see \cite{bus92}).

\begin{proof}
Cut $M$ open along the $g$ homologically nontrivial disjoint loops of length at most $C \, \log(g)$.
The resulting surface $M'$ is a sphere of area $4\pi (g-1)$ with $2g$ boundary components of length at most $C \, \log(g)$.
By~\cite[Theorem~5]{BP09}, there exists a pants decomposition of $M'$ of length bounded by 
$$
46\, \sqrt{2\pi(2g-2)}\sqrt{\left(\frac{C\, \log(g)}{2\pi}\right)^2+1} \leq 46 \, C \, \sqrt{g} \, \log(g).
$$
The same result can also be derived from~\cite{BS10}, albeit with a worse multiplicative constant.
\end{proof}

Unfortunately, in light of the examples of families of surfaces with Cheeger constant uniformly away from zero, \cf~\cite{BR99}, one cannot hope for $\log(g)$ bounds on the length of too many disjoint loops in general.
So this strategy would need to be adapted to say anything new about the lengths of pants decompositions in general. 

\

We now derive some results which we will need in the next section to estimate the sum of the lengths of short pants decompositions, \cf~Theorem~\ref{thm:sumgenusg}. Our first estimate relies on the diastolic inequality for surfaces, \cf~\cite{BS10}.

\begin{proposition}\label{prop:bers}
Let $M$ be a closed Riemannian surface of genus $g$ with $n$ marked points.
Then $M$ admits a pants decomposition with respect to the marked points of length at most
$$
C \, \sqrt{g} \, \sqrt{\area(M)}
$$
for an explicit universal constant~$C$.
\end{proposition}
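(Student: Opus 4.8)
The plan is to reduce the statement to the diastolic inequality of Balacheff--Sabourau, much as in the homology-basis arguments developed earlier. First I would recall that the diastole $\dias(M)$ of a closed Riemannian surface satisfies $\dias(M) \leq C\sqrt{g+1}\sqrt{\area(M)}$ by \cite{BS10}; this is the input I will lean on. The key geometric fact I need is a \emph{sweepout} of $M$ by a one-parameter family of (multi)curves realizing the diastole: there is a function $f:M\to[0,1]$ (or a sweepout by level sets of a map to a graph) all of whose level sets have length at most $\dias(M)$, and the family interpolates between the empty set and the empty set while passing through every point of $M$. The marked points can be assumed to avoid the critical levels, so each is swallowed at some regular level.

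Next I would extract a pants decomposition from this sweepout. The standard move is to pick finitely many regular level sets $\Sigma_1,\dots,\Sigma_N$ of $f$, one just after each topological change (each time the genus or the number of boundary/marked components of $f^{-1}([0,t])$ changes), so that the union $\bigcup_j \Sigma_j$ is a collection of disjoint simple loops cutting $M$ (minus its marked points) into pieces of Euler characteristic $-1$, i.e.\ pairs of pants. One has to be slightly careful: a level set $\Sigma_j$ is a disjoint union of simple closed curves, some of which may be trivial or parallel and should be discarded or merged, and one may need to add a bounded number of extra short curves inside each complementary region to finish the decomposition into pants; but each such region has bounded topology, and the curves added can be taken of length comparable to the boundary of that region, hence $\leq C'\dias(M)$. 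Since each curve in the final decomposition is either a level set (length $\leq \dias(M)$) or one of these bounded correction curves, every curve has length $\leq C'\sqrt{g+1}\sqrt{\area(M)}$, which is the assertion (absorbing the $\sqrt{g+1}$ versus $\sqrt{g}$ discrepancy into the constant, valid for $g\geq 1$; the genus-zero case is handled directly since then the bound follows from a sweepout by circles and the isoperimetric-type estimate on the punctured sphere).

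I expect the main obstacle to be the bookkeeping in turning an abstract diastolic sweepout into an honest pants decomposition with respect to the marked points: ensuring that the chosen regular level sets are pairwise disjoint simple loops (they are, being level sets of the same function), that together they realize every topological reduction, and that the finitely many ``filler'' curves needed inside complementary subsurfaces can be chosen with length controlled by $\dias(M)$ rather than by some uncontrolled quantity. The cleanest way to handle the fillers is to observe that each complementary region, after cutting along the selected level sets, is a surface with all boundary curves of length $\leq \dias(M)$ and of \emph{a priori} bounded topological type relative to the number of level sets used (which is itself $O(g+n)$), so one can triangulate/decompose it using curves of length bounded in terms of its boundary length and area, again invoking the relevant short-curve estimates; since only a bounded number of levels separate consecutive topological changes, the constant stays universal. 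Alternatively, and perhaps more robustly, one can bypass explicit fillers by appealing directly to the fact that a short sweepout yields a pants decomposition whose longest curve is $\leq C\,\dias(M)$ — this is essentially the content of the relationship between Bers-type constants and the diastole, and combining it with \cite{BS10} gives the stated bound. I would present the argument in the second form, citing \cite{BS10} for the diastolic inequality and the sweepout-to-pants-decomposition construction, and only sketch the level-set bookkeeping.
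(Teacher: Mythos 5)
Your first sketch is essentially the paper's proof: take the short sweepout underlying the diastolic inequality of \cite{BS10} (a topological Morse function $f$ all of whose level sets have length at most $C\sqrt{g}\sqrt{\area(M)}$) and convert its level sets into a pants decomposition. The paper organizes the bookkeeping you worry about via the Reeb graph of $f$: after subdividing the Reeb graph so that the marked points are sent to vertices, the preimages of the midpoints of its edges are disjoint simple loops, each contained in a single level set (hence of controlled length), and by Morse theory they cut $M$ into pants, disks and cylinders, a subfamily of the loops then giving a pants decomposition with respect to the marked points. In particular no ``filler'' curves are ever needed --- each complementary piece contains at most one critical point or one marked point, so it is automatically a pair of pants, a disk or a cylinder --- and the main obstacle you single out simply disappears; your device of taking a level just after each change (including each marked point) amounts to the same thing, only less cleanly.

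The one genuine problem is your preferred ``second form'': you cannot cite \cite{BS10} for the sweepout-to-pants-decomposition step. That reference provides only the diastolic inequality, i.e.\ the existence of the short sweepout; the assertion that the diastole bounds from above the length of a pants decomposition is precisely the content of Proposition~\ref{prop:bers}, so invoking it as known would be circular. Presented in your first form, with the Reeb-graph argument in place of the filler discussion, the proof is correct and coincides with the paper's.
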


\begin{proof}
Let $f:M \to \R$ be a topological Morse function.
We suppose that there is only one critical point on each critical level set and that the marked points of~$M$ are regular points lying on different level sets.
Such an assumption is generically satisfied. \\

\noindent The function~$f$ factors through the Reeb space of~$f$ defined as the quotient
$$
G = {\rm Reeb}(f) = M / \sim,
$$
where the equivalence $x \sim y$ holds if and only if $f(x)=f(y)$ and $x$ and $y$ lie in the same connected component of the level set $f^{-1}(f(x))$.
More precisely, we have
$$
f = j \circ \overline{f}
$$
where $\overline{f}: M \to G$ and $j:G \to \R$ are the natural factor maps induced by $f$ and the equivalence relation~$\sim$.
Since $f$ is a topological Morse function, its Reeb space is a finite graph and the factor map $\overline{f} : M \to G$ is a trivial $S^1$-bundle over the interior of each edge of~$G$. \\

\noindent Now, we subdivide~$G$ so that $\overline{f}$ takes the marked points of~$M$ to vertices of~$G$.
From Morse theory, the disjoint simple loops formed by the preimages of the midpoints of the edges of~$G$ decompose~$M$ into pants, disks and cylinders.
Therefore, there exists a pants decomposition of~$M$ with respect to the marked points of length at most
$$
\sup_{t \in G} \length \, \overline{f}^{-1}(t)
$$
and, in particular, of length at most
\begin{equation} \label{eq:ft}
\sup_{t \in \R} \length \, f^{-1}(t).
\end{equation}

\noindent On the other hand, from~\cite{BS10}, there exists a function $f:M \to \R$ as above with
\begin{equation} \label{eq:bs}
\sup_{t \in \R} \length \, f^{-1}(t) \leq C \, \sqrt{g} \, \sqrt{\area(M)}
\end{equation}
for an explicit universal constant~$C$.
Strictly speaking, the main result of~\cite{BS10} is stated differently (using the notion of diastole), however the proof leads to the above estimate. \\

\noindent Combining the estimates~\eqref{eq:ft} and \eqref{eq:bs}, we derive the desired bound.
\end{proof}

\subsection{Sums of lengths of pants decompositions} \label{sec:pants2}

One of our motivations is the study of hyperbolic surfaces with cusps, or possibly cone points. Our techniques allow us to treat the more general case of Riemannian surfaces where the appropriate replacement for cusps are marked points (see the proof of Corollary \ref{coro:sumgenusg} to relate the two notions). We thus require the following extension of the notion of systole.

\forget
\begin{definition} \label{def:ms}
The \emph{marked homotopical systole} of a compact Riemannian surface~$M$ with boundary and $n$ marked points $x_1,\ldots,x_n$ is defined as the supremum of the reals~$\ell$ such that every simple loop of~$M' = M \setminus \{x_1,\ldots,x_n\}$ of length less than~$\ell$ is homotopic to a point in $M'$, a connected component of~$\partial M$ or a small circle around some marked point.
\end{definition}
\forgotten

\begin{definition} \label{def:ms}
Let $M$ be a compact Riemannian surface with (possibly empty) boundary and $n$ marked points $x_1,\ldots,x_n$.
A loop of~$M$ is \emph{admissible} if it lies in $M' = M \setminus \{x_1,\ldots,x_n\}$ and is not homotopic to a point in $M'$, a connected component of~$\partial M$ or a multiple of some small circle around some marked point~$x_i$.
The \emph{marked homotopical systole} of~$M$ is the infimal length of the admissible loop of~$M$.
\end{definition}

We will implicitly assume that the topology of~$M$ is such that admissible loops exist, otherwise there is nothing to prove.
Let us first establish the following result similar to Lemma~\ref{lem:regmet}.

\begin{lemma} \label{lem:regmetm}
Let $M_{0}$ be a closed Riemannian surface with $n$ marked points and marked homotopical systole greater or equal to~$\ell$.
Fix $0<R \leq \frac{\ell}{4}$.
Then there exists a closed Riemannian surface $M$ conformal to~$M_0$ such that
\begin{enumerate}
\item the area of~$M$ is less or equal to the area of~$M_{0}$;
\item $M \setminus \{x_1,\ldots,x_n\}$ and $M_0 \setminus \{x_1,\ldots,x_n\}$ have the same marked length spectrum; \label{eq:reg2}
\item the area of every disk of radius $R$ in $M$ is greater or equal to~$R^{2}/2$.
\end{enumerate}
\end{lemma}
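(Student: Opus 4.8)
The plan is to mimic the proof of Lemma~\ref{lem:regmet} (Gromov's regularization, \cite[5.6.C'']{gro83}) but working on the punctured surface $M_0^* = M_0 \setminus \{x_1,\ldots,x_n\}$ and being careful that the conformal factor one produces extends across the marked points without creating area there. First I would recall the mechanism behind Lemma~\ref{lem:regmet}: one looks at the conformal class of $M_0$ and, roughly speaking, replaces the metric by the one minimizing area subject to the constraint that every ball of radius $R$ has area at least $R^2/2$, or equivalently one ``fills in'' the thin parts of the surface conformally. The key point is that this procedure only ever \emph{shrinks} the metric pointwise (so the area does not increase and the marked length spectrum does not decrease), while the uniform ball-growth property is precisely what is being enforced. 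Here, since $R \leq \ell/4$, every ball of radius $R$ in $M_0^*$ centered away from (but possibly near) the punctures is topologically trivial enough that the argument of \cite[5.6.C'']{gro83} applies verbatim: the constraint $R \le \frac14 \sys_\pi$ in the original lemma is replaced by $R \le \frac14$ times the \emph{marked} homotopical systole.

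The steps, in order, would be: (1) pass to $M_0^*$ and note that the marked homotopical systole of $M_0$ is, by definition, the homotopical systole of $M_0^*$ relative to the allowed peripheral curves, so that balls of radius $2R \le \ell/2$ in $M_0^*$ are as well-behaved as in the closed case; (2) apply the conformal regularization of Gromov to the conformal structure of $M_0$, producing a conformal metric $g$ on $M_0$ (not just on $M_0^*$) whose pointwise conformal factor is $\le 1$ relative to $g_0$, hence $\area(M) \le \area(M_0)$; (3) check that a pointwise-smaller conformal metric cannot increase the length of any loop in $M_0^*$, and that Gromov's construction in fact leaves lengths of loops that stay in the ``thick part'' unchanged, giving assertion~\eqref{eq:reg2} — here one uses that the regularization only modifies the metric on regions that every admissible geodesic loop can be pushed out of; (4) read off the ball-growth lower bound $\area(B(x,R)) \ge R^2/2$ directly from the defining property of the regularized metric, which holds at \emph{every} point $x \in M$ including the marked points, since near a marked point the metric is unchanged or enlarged-to-fill and a radius-$R$ ball there already contains a definite amount of area.

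The main obstacle — and the only place real care is needed — is assertion~\eqref{eq:reg2}: the \emph{equality} of marked length spectra, rather than a one-sided inequality as in Lemma~\ref{lem:regmet}(2). In the closed case Gromov only claims the marked length spectrum does not decrease, but here we want it unchanged on $M_0^*$. The point is that the conformal regularization modifies the metric only on the ``thin'' collar-like pieces where the metric is conformally like a long flat cylinder of small circumference, and on such a piece the shortest loop in each free homotopy class of $M_0^*$ is the core geodesic, whose length is governed by the conformal modulus of the cylinder — which is a conformal invariant and hence untouched. So one argues that for every admissible (and every peripheral) free homotopy class, a length-minimizing representative can be chosen disjoint from the locus where the metric actually changes, giving $\ell_M(\gamma) = \ell_{M_0}(\gamma)$ for all loops $\gamma$ of $M_0^*$. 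I would phrase this by isolating the collars around the punctures and the thin necks, invoking the collar/modulus estimate, and noting that the regularization is supported on the complement of a suitable neighborhood of the spectrum-realizing geodesics. Once~\eqref{eq:reg2} is in hand, items (1) and (3) are immediate from the pointwise domination of conformal factors and from Gromov's construction respectively.
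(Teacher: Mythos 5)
Your overall direction is the same as the paper's: adapt Gromov's conformal regularization (\cite[5.5.C'']{gro83}, see also \cite[Lemma~4.2]{RS08}) to the marked setting, with the marked homotopical systole replacing the homotopical systole. But the proposal misses the one genuinely new ingredient that makes this adaptation work, and the argument you substitute for it does not hold up. The paper's proof runs through a \emph{modified height function}: for an admissible loop $\gamma$ one sets $\tens(\gamma)=\length(\gamma)-\ell_{M'}(\gamma)$ and defines $h'(x)$ as the infimal tension of admissible loops based at~$x$. The regularization process then shrinks the conformal factor only near points where $h'$ is large; since every admissible loop through such a point exceeds its free-homotopy minimum by a definite amount, this shrinking does not decrease any $\ell_{M'}(\gamma)$, and since the factor is $\leq 1$ it cannot increase it either --- this is the actual reason for the \emph{equality} of marked length spectra in~\eqref{eq:reg2} and for $\area(M)\leq\area(M_0)$. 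The output of the process is a conformal metric with arbitrarily small~$h'$, and the disk bound is then supplied by a separate estimate (Lemma~\ref{lem:lowerbound}): if $\tfrac12 h'(x)\leq R\leq \tfrac{\ell}{4}$ then $\area\,D(x,R)\geq (R-\tfrac{h'(x)}{2})^2$, whose proof uses precisely that a disk of radius $<\ell/4$ is contractible and contains at most one marked point, so an arc through~$x$ with endpoints on $\partial D$ can be pushed to $\partial D$ inside $M'$, giving $\length(\partial D)\geq 2R-h'(x)$ and then the coarea formula. None of this appears in your sketch: you describe the regularization as ``minimizing area subject to the constraint that every $R$-ball has area at least $R^2/2$,'' which is circular (the ball bound is a consequence of small height, not a constraint one imposes), and your step (4) never explains how the marked points or the hypothesis $R\leq\ell/4$ enter.

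The specific argument you offer for~\eqref{eq:reg2} is also incorrect as stated. You claim the metric is modified only on thin collar-like pieces and that core-geodesic lengths there are ``governed by the conformal modulus of the cylinder --- which is a conformal invariant and hence untouched.'' The modulus is conformally invariant, but the length of a geodesic in a conformal metric is not determined by the modulus, so conformal invariance gives no control on $\ell_M(\gamma)$; moreover the locus where the regularization acts is not characterized by thin necks but by large tension (e.g.\ long protruding ``hairs,'' where based loops are far from minimizing). The correct mechanism is exactly the two-sided argument above: factor $\leq 1$ gives $\ell_M(\gamma)\leq\ell_{M_0}(\gamma)$, and the tension criterion guarantees the shrinking never cuts below $\ell_{M_0}(\gamma)$. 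So while your plan correctly identifies \emph{where} to look (Gromov's regularization on the punctured surface), the proof as proposed has a genuine gap: it lacks the tension/modified-height device and the accompanying disk-area estimate, and replaces them with a conformal-invariance argument that does not prove the claim.
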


The proof of this lemma closely follows the arguments of~\cite[5.5.C']{gro83} based on the height function.
In our case, we will need a modified version of it.

\begin{definition}
The \emph{tension} of an admissible loop~$\gamma$ of~$M$, denoted by $\tens(\gamma)$, is defined as 
$$
\tens(\gamma) = \length(\gamma) - \ell_{M'}(\gamma).
$$
Recall that $\ell_{M'}(\gamma)$ is the infimal length of the loops of~$M'$ freely homotopic to~$\gamma$.
The \emph{modified height function} of~$M$ is defined for every $x \in M'$ as the infimal tension of the admissible loops of~$M$ based at~$x$. It is denoted by~$h'(x)$.
\end{definition}

The following estimate is a slight extension of~\cite[5.1.B]{gro83}.

\begin{lemma} \label{lem:lowerbound}
Let $M$ be a closed Riemannian surface with $n$ marked points and marked homotopical systole greater or equal to~$\ell$.
Then
$$
\area \, D(x,R) \geq \left( R-\frac{h'(x)}{2} \right)^2
$$
for every $x \in M$ and every $R$ such that $\frac{1}{2} h'(x) \leq R \leq \frac{\ell}{4}$.
\end{lemma}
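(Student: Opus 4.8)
The plan is to adapt Gromov's lower bound on the area of balls (\cite[5.1.B]{gro83}) to the marked setting, using the modified height function $h'$ in place of the usual height function. The key point is that $h'$ behaves like a $1$-Lipschitz function whose growth near a point is controlled by how short the admissible loops through that point can be made. First I would record the basic Lipschitz estimate: for $x,y \in M'$, joining a short admissible loop at $x$ by a minimizing arc to $y$ and back produces an admissible loop at $y$, so $h'(y) \le h'(x) + 2\,{\rm dist}(x,y)$, and symmetrically; hence $|h'(x)-h'(y)| \le 2\,{\rm dist}(x,y)$. In particular $h'$ is continuous on $M'$ and extends continuously (in fact by the same inequality) to all of $M$.

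Next I would fix $x$ and set $\rho = \frac{1}{2} h'(x)$, so $\rho \le R \le \frac{\ell}{4}$. The idea is that on the sphere $S(x,t) = \partial D(x,t)$ for $\rho < t \le R$, the value of $h'$ is at least $2(t-\rho) > 0$: indeed if $h'(y) < 2(t-\rho)$ for some $y \in S(x,t)$ then the Lipschitz estimate gives $h'(x) \le h'(y) + 2t < 2(t-\rho) + 2t$... — more carefully, one uses $h'(x) \le h'(y) + 2\,{\rm dist}(x,y)$ only in the direction that is useful; the correct chain is $h'(y) \ge h'(x) - 2\,{\rm dist}(x,y)$ is the wrong sign, so instead I would argue that a point $y$ with $h'(y)=0$ would force, by the reverse Lipschitz inequality, $h'(x) \le 2\,{\rm dist}(x,y)$, i.e. ${\rm dist}(x,y) \ge \rho$; this shows that the open ball $D(x,\rho)$ contains no point where $h'$ vanishes. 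More quantitatively, for $0 \le t \le R$ every point of $S(x,t)$ carries an admissible loop of length $< \ell$ (this is where $R \le \ell/4$ enters: a loop of tension close to $h'$ based at such a point, concatenated appropriately, stays admissible and short), so the coarea inequality applies: $\area\, D(x,R) \ge \int_0^R \length\, S(x,t)\, dt$, and each $S(x,t)$ with $\rho < t \le R$ bounds, together with a short arc, an admissible loop, forcing $\length\, S(x,t) \ge$ something growing linearly, which after integration yields $\area\, D(x,R) \ge (R - \rho)^2 = (R - \tfrac{1}{2}h'(x))^2$.

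The technical heart — and the step I expect to be the main obstacle — is the verification that the loops one produces on the level spheres $S(x,t)$ are genuinely \emph{admissible}, i.e. not null-homotopic in $M'$, not homotopic to a boundary component (there are none here since $M$ is closed), and not a multiple of a small loop around a marked point. This is exactly the subtlety that distinguishes the marked case from Gromov's original argument: a short loop on $S(x,t)$ could a priori encircle a puncture. One controls this via the hypothesis that the marked homotopical systole is $\ge \ell$ together with $R \le \ell/4$: any loop of length $< \ell$ that is freely homotopic to (a multiple of) a small circle around $x_i$ would have to be based near $x_i$, but the loop in question is contained in $D(x,R)$ and has a definite minimizing behaviour forcing its free homotopy class to be that of a loop realizing positive tension, which by definition of $h'$ and admissibility cannot be peripheral. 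Once this is settled, the coarea estimate and the elementary inequality $\int_\rho^R 2(t-\rho)\,dt = (R-\rho)^2$ finish the proof. I would then remark that the bound is trivial (the right-hand side being nonpositive) when $R < \tfrac{1}{2}h'(x)$, which is why the stated range begins at $R = \tfrac{1}{2}h'(x)$.
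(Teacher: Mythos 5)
There is a genuine gap at the quantitative heart of the argument. You correctly identify the target structure (a linear lower bound on the lengths of the distance circles $S(x,t)$, integrated by the coarea formula, with $\int_\rho^R 2(t-\rho)\,dt=(R-\rho)^2$), but you never actually prove the needed inequality $\length\,S(x,t)\geq 2t-h'(x)$. The sentence ``each $S(x,t)$ with $\rho<t\leq R$ bounds, together with a short arc, an admissible loop, forcing $\length\,S(x,t)\geq$ something growing linearly'' is not an argument, and the mechanism it suggests (manufacturing admissible loops out of the circles $S(x,t)$) is not the one that works: a distance circle together with a short arc has no reason to be admissible, and this is precisely the difficulty you yourself flag as ``the main obstacle'' without resolving it. The correct mechanism runs in the opposite direction and makes the admissibility issue disappear: one starts from an arbitrary \emph{admissible} loop $\gamma$ based at~$x$ (these exist by definition of $h'$) and uses the definition of the tension. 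Since $R\leq\ell/4$ and the marked points are at mutual distance at least $\ell/2$, the disk $D=D(x,t)$ is contractible in $M$ and contains at most one marked point; hence $\gamma$ cannot stay inside $D$, so it contains an arc $\alpha$ through $x$ with endpoints on $\partial D$ and $\length(\alpha)\geq 2t$, and this arc can be deformed rel endpoints inside $M'$ onto an arc $\alpha'$ of $\partial D$ (the at-most-once-punctured disk retracts to its boundary). This gives $\tens(\gamma)\geq\length(\alpha)-\length(\alpha')\geq 2t-\length(\partial D)$, and taking the infimum over admissible $\gamma$ based at $x$ yields $\length\,\partial D(x,t)\geq 2t-h'(x)$, after which the coarea integration you describe finishes the proof.

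Two further remarks. Your Lipschitz estimate $|h'(x)-h'(y)|\leq 2\,\mathrm{dist}(x,y)$ is plausible but plays no role in the actual proof, and the paragraph where you try to use it (``the correct chain is \dots the wrong sign, so instead \dots'') is internally confused and leads nowhere; it should be dropped. Also, the role of the hypothesis $R\leq\ell/4$ is not, as you suggest, to keep short loops from encircling punctures, but to guarantee that $D(x,R)$ is contractible in $M$ and meets at most one marked point (the marked homotopical systole bound forces distinct marked points to be at distance at least $\ell/2$), which is what legitimizes both the exit of any admissible loop from the disk and the boundary deformation inside $M'$.
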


\begin{proof}
We argue as in~\cite[5.1.B]{gro83}.
Note first that the assumption clearly implies that the marked points are at distance at least~$\ell/2$ from each other.
If $R < \ell/4$, the disk $D=D(x,R)$ is contractible in~$M$ and contains at most one marked point of~$M$.
Thus, every admissible loop~$\gamma$ based at~$x$ contains an arc~$\alpha$ passing through~$x$ with endpoints in~$\partial D$.
This arc~$\alpha$ can be deformed into an arc~$\alpha'$ of~$\partial D$ in~$M'$ while keeping its endpoints fixed (indeed, $D$ contains at most one marked point).
The tension of~$\gamma$ satisfies
\begin{eqnarray*}
\tens(\gamma) & \geq & \length(\alpha) - \length(\alpha') \\
& \geq & 2R - \length(\partial D).
\end{eqnarray*}
Hence, $\length(\partial D) \geq 2R - h'(x)$.
By the coarea formula, we derive the desired lower bound on the area of~$D$ by integrating the previous inequality from $\frac{1}{2}h'(x)$ to~$R$.
\end{proof}

Our first preliminary result, namely Lemma~\ref{lem:regmetm}, can be derived from this estimate.

\begin{proof}[Proof of Lemma~\ref{lem:regmetm}]
The proof of~\cite[5.5.C'']{gro83} (see~\cite[Lemma~4.2]{RS08} for further details) applies with the modified height function and shows that there exists a closed Riemannian surface~$M$ conformal to~$M_0$ with a conformal factor less or equal to~$1$ which satisfies~\eqref{eq:reg2} and has its modified height function arbitrarily small.
The result follows from Lemma~\ref{lem:lowerbound}.
\end{proof}

Now, we can state the following estimate.

\begin{proposition}\label{prop:S}
Let $S$ be a Riemannian sphere with $n$ marked points $x_1,\ldots,x_n$.
Suppose that the marked homotopical systole of~$S$ is greater or equal to~$\ell$.
Then the sphere~$S$ admits a pants decomposition with respect to its marked points whose sum of the lengths is bounded from above by
$$
C\, \log(n) \frac{\area \, S}{\ell},
$$
where $C=2^{10}$.
\end{proposition}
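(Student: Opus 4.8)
The plan is to transplant the nerve-graph construction used in the proof of Theorem~\ref{theo:ell} to the punctured-sphere setting, and then to organize the curves of the pants decomposition along a balanced hierarchical decomposition of the marked points.

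\emph{Step 1: regularization and a short graph through the marked points.} Using Lemma~\ref{lem:regmetm} I would replace $S$ by a conformal metric (still called $S$) of no larger area and with the same marked length spectrum on $S'=S\setminus\{x_1,\dots,x_n\}$, having the extra property that every disk of radius $R\le \ell/4$ has area at least $R^2/2$; by Lemma~\ref{lem:lowerbound} the marked points are then pairwise at distance at least $\ell/2$, so the disks $D(x_k,\ell/4)$ are pairwise disjoint and
\[
n\,\ell^2 \le 32\,\area(S).
\]
Fixing $r_0=\ell/16$ and choosing a maximal family $\{D_i\}_{i\in I}$ of pairwise disjoint disks of radius $r_0$, each of area $\ge r_0^2/2$, gives $|I|\le 2\area(S)/r_0^2$ and, by maximality, a covering of $S$ by the disks $2D_i$ (which are contractible and contain at most one marked point since $2r_0<\ell/4$). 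As in Lemma~\ref{lem:epi}, the $1$-skeleton $\Gamma$ of the nerve of a slight thickening of $\{2D_i\}$ realizes in $S$, via a map sending vertices to the centers and edges to short geodesics of length $<\ell/4$, as a connected graph; after a small generic perturbation I may assume this realization is an embedding, and a spanning tree $T_0$ of $\Gamma$ then has image of total length at most $(|I|-1)\cdot\ell/4$. Joining each marked point $x_k$ to the center of a disk $2D_i$ containing it by a shortest geodesic (of length $<2r_0$) and extracting a spanning tree of the result, I obtain an embedded tree $T\subset S$ passing through all the marked points with $\length(T)\le C_1\,\area(S)/\ell$, using $n\ell^2\le 32\area(S)$ to absorb the $n$ spokes.

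\emph{Step 2: the pants decomposition dual to a balanced trivalent tree.} After trimming from $T$ the leaves that are not marked points, suppressing degree-two vertices, and expanding vertices of degree $\ge 4$ by edges of arbitrarily small length, I obtain a trivalent tree $\tau$ embedded in $S$ with its $n$ leaves exactly at the marked points and $\length(\tau)\le\length(T)$. For every internal edge $e$ of $\tau$, let $\tau_e$ be the component of $\tau$ minus the midpoint of $e$ carrying the fewer marked points, and let $\gamma_e$ be the boundary of a thin regular neighborhood of $\tau_e$, pushed off the marked points. A standard fact about trivalent trees embedded in $S^2$ is that the curves $\{\gamma_e\}_e$ are simple, pairwise disjoint, and cut $S'$ into $n-2$ pairs of pants, one around each internal vertex of $\tau$; moreover $\length(\gamma_e)\le 2\,\length(\tau_e)+\varepsilon$ for arbitrarily small $\varepsilon>0$. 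To make $\tau$ balanced I would build it by recursion: at each stage one selects a vertex (or edge) of the current subtree of $T$ whose removal breaks the relevant marked points into blocks each of size at most half, regroups the blocks into two bunches of relative sizes in $[\tfrac13,\tfrac23]$, and recurses on the two corresponding subtrees of $T$. This recursion has depth $D\le\log_{3/2}(n)+O(1)$, and at each fixed level the subtrees of $T$ being processed are pairwise edge-disjoint; hence $\sum_{e\ \text{at level}\ j}2\,\length(\tau_e)\le 2\,\length(T)$, and summing over all levels gives
\[
\sum_{e}\length(\gamma_e)\ \le\ 2D\,\length(T)\ \le\ C\,\log(n)\,\frac{\area(S)}{\ell}.
\]
Choosing the numerical constants carefully along the way should yield $C=2^{10}$.

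\emph{Main obstacle.} The genuinely delicate part is not the length bookkeeping but turning the combinatorial picture into an honest pants decomposition: one must guarantee that the realizations of $\Gamma$ and of the spokes can be perturbed into general position so that $T$ is embedded, that the neighborhood boundaries $\gamma_e$ are simple closed curves, pairwise disjoint, and disjoint from the marked points, and --- most importantly --- that at every recursion level the curve one draws encloses only marked points and never a previously drawn curve, which is exactly what forces the balanced $2/3$ splitting ratio and must be verified level by level. A second point requiring care is that the whole argument only closes because of the area lower bound $\area(S)\ge\tfrac1{32}n\ell^2$, itself a consequence of the regularization of Lemma~\ref{lem:regmetm}; without it the spoke contribution of order $n\ell$ could not be absorbed into $\area(S)/\ell$.
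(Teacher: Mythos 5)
Your proposal is correct in its overall strategy and that strategy coincides with the paper's: regularize the metric (Lemma~\ref{lem:regmetm}), use the disk-packing bound to produce a connected graph of length $O(\area(S)/\ell)$ passing through the marked points, and then surround recursively halved groups of marked points by curves, so that each of the $O(\log n)$ levels contributes total length $O(\length)$ of the spanning structure. Where you genuinely diverge is in the second half. The paper does not work with a balanced decomposition of the tree: in Lemma~\ref{lem:path} it first builds an \emph{embedded} graph through the marked points via Voronoi cells (arcs from each cell's center to the midpoints of its edges stay inside the cell, which is what guarantees embeddedness), and then doubles a spanning tree along the boundary of a thin tubular neighborhood to get a single non-self-intersecting path $\Gamma$ visiting all marked points. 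Once one has a path, the recursion is trivial: split the ordered sequence of marked points in half, take nested tubular neighborhoods, and the resulting curves are automatically simple, pairwise disjoint and correctly nested, with each segment of $\Gamma$ surrounded by at most $[\log_2 n]+1$ curves. Your centroid-type decomposition of the tree can be made to work --- your observation that the ``smaller side'' subtrees attached to distinct edges are always nested or disjoint is the right one, and the per-level edge-disjointness gives the same $2\kappa\,\length$ bookkeeping --- but it forces you to verify balancedness, bunching of blocks, and disjointness of the neighborhood curves level by level, all of which the path trick sidesteps. What each approach buys: yours avoids the doubling step (a factor $2$ in length and the $2n\rho$ correction), while the paper's is shorter to verify and makes the combinatorics of the pants decomposition transparent.

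One step in your write-up is not justified as stated: you cannot get embeddedness of the nerve graph ``after a small generic perturbation'' --- a generic perturbation only makes crossings transverse, it cannot remove them, and your whole Step~2 (simplicity and disjointness of the curves $\gamma_e$, and the bound $\length(\gamma_e)\le 2\length(\tau_e)+\varepsilon$) needs $T$ to be embedded. You flag this as the main obstacle, and it is fixable (crossing-resolution surgeries that do not increase length, or a length-minimizing spanning network, or simply the paper's Voronoi construction), but as written it is the one real gap; the paper's Lemma~\ref{lem:path} is designed precisely to produce the embedded object you need. Finally, note that pinning down the specific constant $C=2^{10}$ requires the sharper bookkeeping of the paper (with $r_0=\ell/4$ and depth $\log_2 n$); your $\log_{3/2}$-depth recursion with $r_0=\ell/16$ gives the same order but a worse constant.
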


\begin{proof}
We can suppose that $n \geq 4$, otherwise there is nothing to prove.
As noticed before, the assumption implies that the marked points $x_1,\ldots,x_n$ are at distance at least $\ell/2$ from each other.
By Lemma~\ref{lem:regmetm}, we can further assume that the area of every $r_{0}$-disk on~$S$ is greater or equal to~$r_{0}^{2}/2$, with $r_{0}=\ell/4$

\begin{lemma}\label{lem:path}
There exists a nonselfintersecting path~$\Gamma$ on~$S$ connecting all the marked points~$x_i$ with
\begin{equation} \label{eq:G}
\length \, \Gamma \, \leq C' \, \frac{\area \, S}{\ell}
\end{equation}
where $C'=2^{7}$.
\end{lemma}

\begin{proof}[Proof of Lemma \ref{lem:path}]
We consider the family of disjoint $r_{0}$-disks $D_1,\ldots,D_n$ centered at the marked points $x_1,\ldots,x_n$ of~$S$ with $r_0=\ell/4$.
We complete this family into a maximal family of disjoint $r_{0}$-disks~$\{D_i\}_{i \in I}$.
Since the area of these disks is greater or equal to~$r_{0}^{2}/2$, we have
\begin{equation} \label{eq:II}
 |I|\leq 2 \, \frac{\area \, S}{r_{0}^{2}}.
\end{equation}

\bigskip
 
\noindent We decompose the sphere~$S$ into Voronoi cells $V_i=\{p \in S \mid d(p,x_i)\leq d(p,x_j) \text{ for all } j\neq i\}$ around the points~$x_i$, with $i \in I$.
Each Voronoi cell is a polygon centered at some point~$x_{i}$.
Remark that a pair of adjacent Voronoi cells (\ie, meeting along an edge) corresponds to a pair of centers of distance at most~$4 r_0$.
To see this, consider a point~$p$ on the boundary of both of two adjacent cells.
It is at an equal distance~$\delta$ to both cell centers and it is closer to their centers than to any others.
Now $\delta \leq 2 r_0$, otherwise there would exist a disk of radius~$r_0$ around $p$ in~$S$ disjoint from all other disks~$\{D_i\}_{i \in I}$ and the system of disks would not be maximal.\\

\noindent We connect the center of every Voronoi cell~$V_{i}$ to the midpoints of its edges through length-minimizing arcs of~$V_{i}$.
(The length-minimizing arc connecting a pair of points is not necessarily unique, but we choose one.)
As a result, we obtain a connected embedded graph~$G$ in~$S$ with vertices~$\{x_{i}\}_{i \in I}$.
We already noticed that the lengths of the edges of~$G$ are at most~$4 r_0$. \\

\noindent The graph $\cup_i \partial V_i$ of~$S$ given by the Voronoi cell decomposition has the same number of edges~$e$ as~$G$.
Since the number of vertices in the Voronoi cell decomposition is less or equal to~$\frac{2e}{3}$, the Euler characteristic formula shows that the number of edges in the Voronoi cell decomposition and in~$G$ is at most $3|I|-6$.
Thus,
\begin{equation} \label{eq:lG}
\length \, G \leq 12 \, (|I|-2) \, r_0.
\end{equation}

\medskip

\noindent By considering the boundary~$\partial U$ of a small enough $\rho$-tubular neighborhood~$U$ of the minimal spanning tree~$T$ of~$G$, we obtain a loop surrounding~$T$ of length less than 
\begin{equation} \label{eq:2T}
2 \, \length \, T + \varepsilon
\end{equation}
for any given $\varepsilon >0$.
We then construct a nonselfintersecting path~$\Gamma$ connecting all the marked points~$x_1,\ldots,x_{n}$ with 
$$
\length \, \Gamma \leq 2 \, \length \, T + \varepsilon + 2n \rho.
$$
It suffices to take~$\Gamma$ lying in~$\partial U$ and modify it in the neighborhood of each point~$x_i$ by connecting~$\Gamma$ to~$x_i$ through two rays arising from~$x_{i}$. \\

\noindent The result follows from \eqref{eq:lG} and~\eqref{eq:II} by taking $\varepsilon$ and $\rho$ small enough.
\end{proof}

\medskip

\noindent 
Let us resume the proof of Proposition~\ref{prop:S}.
Let $\Gamma \subset S$ be as in the previous lemma.
Without loss of generality, we can suppose that $\Gamma$ is a piecewise geodesic path connecting the marked points $x_1,\ldots,x_n$ in this order. \\

\noindent We split the piecewise geodesic path $\Gamma=(x_1,\ldots,x_n)$ into two paths $\Gamma_1=(x_1,\ldots,x_m)$ and $\Gamma_2=(x_{m+1},\ldots,x_n)$ with $m=n/2$ if $n$ is even, and $m=(n+1)/2$ otherwise.
Now, we consider a loop~$\gamma$ surrounding~$\Gamma$ in~$S$, that is, the boundary of a small tubular neighborhood~$U$ of~$\Gamma$ in~$S$.
We also consider two loops $\gamma_1$ and~$\gamma_2$ surrounding $\Gamma_1$ and $\Gamma_2$ in~$U$.
Then we repeat this process with $\Gamma_1$ and $\Gamma_2$, and so forth, \cf~Figure~\ref{fig1}.
(When a path is reduced to a single marked point, we cannot split it any further.
So we take the same loop surrounding this marked point for $\gamma$, $\gamma_1$ and~$\gamma_2$.)
We stop the process at the step 
$$
\kappa= [\log_2 n] +1
$$
because after this step all the new loops surround a single marked point. \\

\begin{figure}[h]
\begin{center}
\ovalbox{\ovalbox{\ovalbox{$\bullet \quad \bullet$} \ovalbox{$\bullet \quad \bullet$}} \ovalbox{\ovalbox{$\bullet \quad \bullet$} $\bullet$}}
\end{center}
\caption{} \label{fig1}
\end{figure}



\noindent It is clear that from our construction some subfamily of the set of loops that arise gives a (nongeodesic) pants decomposition of the sphere~$S$ with respect to its marked points. \\

\noindent Since each segment of~$\Gamma$ is surrounded by at most~$\kappa$ loops, the sum of the lengths of the pants decomposition loops is bounded from above by
\begin{equation} \label{eq:kappa}
2 \kappa \, \length \, \Gamma + \varepsilon,
\end{equation}
where $\varepsilon$ depends on the width of~$U$ and goes to zero with it.
The desired upper bound follows from~\eqref{eq:G} by letting $\varepsilon$ go to zero.
\end{proof}

We now state our main theorem in this direction which states that for any fixed genus $g$, one can control the growth rate of sums of lengths of pants decompositions of a surface of area $\sim g+n$ by a factor which grows like $n \log(n)$, where $n$ is the number of marked points.

\begin{theorem}\label{thm:sumgenusg}
Let $M$ be a closed Riemannian surface of genus $g$ with $n \geq 1$ marked points whose area is equal to $4\pi(g+\frac{n}{2}-1)$.
Then $M$ admits a pants decomposition with respect to the marked points whose sum of the lengths is bounded from above by
$$
C_g\, n \log (n+1),
$$
where $C_g$ is an explicit genus dependent constant.
\end{theorem}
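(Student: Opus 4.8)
The plan is to peel off the genus using the Bers-type bound of Proposition~\ref{prop:bers}, then to reduce to planar pieces with marked homotopical systole bounded below, and finally to apply the planar estimate of Proposition~\ref{prop:S} to each piece. First I would apply Proposition~\ref{prop:bers} to obtain a pants decomposition of $M$ with respect to its marked points, all of whose curves have length at most $C\sqrt{g}\,\sqrt{\area(M)}$, where $\area(M)=4\pi(g+n/2-1)\le C_g\,n$. As in the proof of Lemma~\ref{lem:bershomology}, this decomposition contains $g$ of its curves along which cutting successively lowers the genus, and cutting $M$ along them yields a connected planar surface with $n$ marked points and $2g$ boundary circles. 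Capping each of these $2g$ circles with a spherical cap carrying a marked point at its centre (smoothing along the gluing locus if one wishes) produces a closed Riemannian sphere $S$ with $n+2g$ marked points and $\area(S)\le(1+2C^{2}g^{2})\,\area(M)\le C_g\,n$. The $g$ curves removed so far have total length at most $g\,C\sqrt{g}\,\sqrt{\area(M)}\le C_g\sqrt{n}\le C_g\,n\log(n+1)$, so it remains to produce a short pants decomposition of $S$ with respect to its marked points.

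Next, fix a constant $\ell>0$ and choose a maximal collection of pairwise disjoint, pairwise non-homotopic, admissible simple loops of length less than $\ell$ on $S$; cut $S$ along them and cap each resulting boundary circle by a spherical cap with a marked point at its centre. A sphere with $N$ marked points carries at most $N-3$ disjoint pairwise non-homotopic admissible simple loops, so at most $n+2g-3$ loops are used; their total length is at most $(n+2g-3)\,\ell\le C_g\,n$, and the new caps add total area at most $2(n+2g-3)\,\ell^{2}\le C_g\,n$. One is left with finitely many closed spheres $S_{1},\dots,S_{p}$ with marked points such that $\sum_{j}\area(S_{j})\le C_g\,n$, such that $\sum_{j}N_{j}\le 3(n+2g)$ where $N_{j}$ is the number of marked points of $S_{j}$, and --- by maximality of the chosen collection --- such that the marked homotopical systole of each $S_{j}$ is at least $\ell$.

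Finally, applying Proposition~\ref{prop:S} to each $S_{j}$ produces a pants decomposition of $S_{j}$ with respect to its marked points of total length at most $2^{10}\,\ell^{-1}\log(N_{j})\,\area(S_{j})$ (and none needed when $N_{j}\le 3$). Summing over $j$ and using $\log(N_{j})\le\log\!\big(3(n+2g)\big)\le C_g\log(n+1)$ bounds the total by $2^{10}\,\ell^{-1}\log\!\big(3(n+2g)\big)\sum_{j}\area(S_{j})\le C_g\,n\log(n+1)$. Concatenating the $g$ genus-reducing curves, the at most $n+2g-3$ short loops, and the pants curves of all the $S_{j}$, and discarding the auxiliary caps, yields a pants decomposition of $M$ with respect to its marked points of total length at most $C_g\,n\log(n+1)$: each cap marked point of an $S_{j}$ corresponds to one of the removed curves of $M$, which thus becomes an interior curve of the decomposition shared by two adjacent pants.

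The step I expect to be the main obstacle is the reduction in the middle paragraph. One must check that after cutting along a maximal disjoint family of short admissible loops every resulting sphere genuinely has marked homotopical systole at least $\ell$ --- in particular that no short admissible loop can survive that encircles two or more of the newly created marked points, which holds because such a loop could be isotoped off all the cut curves and would then contradict maximality --- so that Proposition~\ref{prop:S}, and hence the regularisation of Lemma~\ref{lem:regmetm} it relies on, applies to each piece. One must also keep the total number of marked points within a uniform multiple of $n+2g$, and, crucially, arrange the bookkeeping so that the logarithmic factor is governed by the grand total of marked points rather than accumulating multiplicatively over the pieces; the contributions of the genus-reducing curves and of the short loops are then of lower order and absorbed into $C_g\,n\log(n+1)$.
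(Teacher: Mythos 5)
Your proposal is correct in outline and uses the same three ingredients as the paper: Proposition~\ref{prop:bers} together with (the idea of) Lemma~\ref{lem:bershomology} to remove the genus with $g$ curves of length $O_g(\sqrt{n})$, capping by hemispheres with marked summits to pass to marked spheres, Proposition~\ref{prop:S} for the $n\log n$ bound on spheres, and a reassembly in which the cap points correspond to curves of the final decomposition. Where you genuinely diverge is in how the hypothesis of Proposition~\ref{prop:S} (marked homotopical systole at least $\ell$) is secured: the paper arranges it \emph{before} touching the genus, by proving the more general statement for surfaces of signature $(g;p,q)$ and running an induction on $(g,p+q)$, splitting $M$ along any short admissible simple loop at the outset, so that in the end a \emph{single} marked sphere is fed to Proposition~\ref{prop:S}; you instead pass to the sphere first and then cut along a maximal family of disjoint, pairwise non-homotopic short admissible simple loops, applying Proposition~\ref{prop:S} to each of the resulting pieces and keeping the logarithm global. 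Your route avoids the induction and the need to state the theorem for bordered surfaces, at the price of the bookkeeping over several pieces, which you handle correctly (the count $\leq N-3$ of cut curves, $\sum_j N_j\leq 3(n+2g)$, $\sum_j \area(S_j)\leq C_g n$, and the exclusion of pieces with at most three marked points). The one point that deserves more care than your maximality remark gives it is that the marked homotopical systole is defined over \emph{all} admissible loops, not only simple ones: maximality of a family of simple loops of length less than $\ell$ only rules out short \emph{simple} admissible loops in the pieces, while a piece could a priori still carry a short non-simple admissible loop (e.g.\ a figure-eight or filling-type curve), and it is this quantity that Lemma~\ref{lem:lowerbound} and hence Proposition~\ref{prop:S} use. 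This is repairable: from any admissible loop of length $L$ one extracts, via the boundary of a regular neighbourhood together with a connecting arc, a \emph{simple} admissible loop of length at most a bounded multiple of $L$ (on a sphere with at least four marked points), so it suffices to choose the cutting threshold a fixed fraction of $\ell$, which only affects constants; note that the paper's own reduction, which splits along short \emph{simple} admissible loops and then asserts the marked systole bound, relies on the same implicit bridging, so this is a shared gloss rather than a defect specific to your argument. The remaining cap manipulations (pushing curves off hemispheres, checking that a loop homotopic to a cut curve inside a piece is cap-parallel there and hence not admissible) are also needed in the paper's proof and are handled there at the same level of detail as in your sketch.
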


Before proving the theorem, we note the following corollary. 
\begin{corollary} \label{coro:sumgenusg}
Let $M$ be a noncompact hyperbolic surface of genus $g$ with $n$ cusps.
Then $M$ admits a pants decomposition with the sum of its lengths bounded above by
$$
C_g\, n \log (n+1),
$$
where $C_g$ is an explicit genus dependent constant.
\end{corollary}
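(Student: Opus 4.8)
The plan is to deduce Corollary~\ref{coro:sumgenusg} from Theorem~\ref{thm:sumgenusg} via the standard dictionary between cusped hyperbolic surfaces and closed surfaces with marked points. By the Gauss--Bonnet formula a complete hyperbolic surface $M$ of genus~$g$ with $n$ cusps has area $2\pi(2g-2+n)=4\pi(g+\tfrac n2-1)$, which is exactly the normalisation appearing in Theorem~\ref{thm:sumgenusg}. So first I would fix a small $\ell_0>0$, truncate each cusp along a horocycle of length~$\ell_0$ (the cusp end beyond it being an annulus of area~$\ell_0$), remove these $n$ ends, and glue back, along each truncating horocycle, a small smooth cap carrying a marked point at its centre. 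This produces a closed Riemannian surface~$\overline M$ of genus~$g$ with $n$ marked points whose metric agrees with that of~$M$ away from the caps; choosing the caps to have total area~$n\ell_0$ — or of any bounded total area, rescaling afterwards by a factor $t\in[1,2]$, which only multiplies lengths by $\sqrt t\le\sqrt2$ — one arranges $\area(\overline M)=4\pi(g+\tfrac n2-1)$.

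Next, applying Theorem~\ref{thm:sumgenusg} to~$\overline M$ yields a pants decomposition~$\mathcal Q$ of~$\overline M$ with respect to its $n$ marked points of total length at most $C_g\,n\log(n+1)$. Since $\overline M$ minus its marked points is homeomorphic to~$M$, the curves of~$\mathcal Q$ determine isotopy classes of essential simple closed curves on~$M$ which form a pants decomposition of~$M$ with respect to its cusps; I would realise these by their hyperbolic geodesic representatives, which are simple and pairwise disjoint. To bound the total hyperbolic length, push each $\gamma\in\mathcal Q$ off the caps, replacing the sub-arcs of~$\gamma$ lying inside a cap by arcs of the corresponding truncating horocycle: this yields a representative lying in the region where $\overline M$ carries the hyperbolic metric, of hyperbolic length at most $\length_{\overline M}(\gamma)$ plus the total length of the horocyclic arcs used. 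Summing over~$\mathcal Q$ and passing to geodesic representatives (which only shortens) bounds the total length by $C_g\,n\log(n+1)$ plus all the horocyclic corrections, and the latter sum is $O_g(n)$: each of the $n$ horocycles has length~$\ell_0$ and is met by the pants curves a bounded number of times, which one arranges by first removing the inessential excursions of~$\mathcal Q$ into the caps — an operation that does not increase $\length_{\overline M}$. This gives the claimed bound $C_g'\,n\log(n+1)$.

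The only delicate point is this last step, namely controlling the length cost of moving the pants curves of~$\overline M$ out of the capping disks back into~$M$; this is precisely why the truncating horocycles must be chosen short and the pants decomposition of~$\overline M$ must first be isotoped so as to meet the caps minimally. Everything else is routine book-keeping. For context, Theorem~\ref{thm:sumgenusg} itself would be proved by cutting the surface along $g$ disjoint homologically independent loops — furnished by Proposition~\ref{prop:bers} together with Lemma~\ref{lem:bershomology}, hence of length controlled by the genus and the (normalised) area — to reduce to a sphere with marked points; then cutting away a maximal family of short admissible loops so that each resulting piece has marked homotopical systole bounded below by a fixed constant; and finally applying the sphere estimate of Proposition~\ref{prop:S} to each piece, the factor $\log(n+1)$ coming from the $[\log_2 n]+1$ levels of the surrounding hierarchy used there.
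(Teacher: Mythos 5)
Your proposal is correct and follows essentially the same route as the paper: cap off the cusps to obtain a closed Riemannian surface of genus $g$ with $n$ marked points and the right area normalization (Gauss--Bonnet), apply Theorem~\ref{thm:sumgenusg}, and transfer the resulting pants decomposition back to~$M$. The paper disposes of the transfer step more briefly than your horocyclic-correction bookkeeping, simply observing that any simple loop entering a sufficiently small cap can be shortened, so the relevant curves (and their geodesic representatives in~$M$) avoid the caps and their lengths do not increase.
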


\begin{proof}[Proof of Corollary~\ref{coro:sumgenusg}]
Let us cut off the cusps of the hyperbolic surface~$M$ and replace the tips with small round hemispheres to obtain a closed Riemannian surface~$N$ with $n$ marked points corresponding to the summits of the hemispheres.
The area of~$N$ can be made arbitrarily close to the original area by choosing the cut off tips of area arbitrarily small.
The hemispheres will then be of arbitrarily small area as well.
To avoid burdening the argument by epsilontics, we will assume that $M$ and $N$ have the same area.

We remark that short simple closed geodesics (on either surface) do not approach the tips.
Indeed, every simple loop passing through a small enough tip can be deformed into a shorter loop.
Therefore, the geodesic behavior that we are concerned with is identical on both surfaces.

We conclude by applying Theorem~\ref{thm:sumgenusg} to~$N$, which yields the desired pants decomposition on~$M$.
\end{proof}

\begin{remark}
The proof of Corollary~\ref{coro:sumgenusg} applies to noncompact complete Riemannian surfaces of genus~$g$ with $n$ ends whose area is normalized at $4\pi(g+\frac{n}{2}-1)$.
\end{remark}

Now, we can proceed to the proof of Theorem~\ref{thm:sumgenusg}, which relies on Propositions~\ref{prop:bers} and~\ref{prop:S}.

\begin{proof}[Proof of Theorem~\ref{thm:sumgenusg}]

\forget
Arguing as in the beginning of the proof of Theorem~\ref{theo:lng}, we can assume that there is no (simple) closed geodesic of length at most~$\ell$ on~$M$ [to develop]. \\

Let us cut off the cusps of the hyperbolic surface~$M$ and replace the tips with small round hemispheres to obtain a closed Riemannian surface~$N$.
The area of~$N$ can be made arbitrarily close to the original area by choosing the cut off tips of area arbitrarily small, and the hemispheres will then be of arbitrarily small area as well.
To avoid burdening the argument by epsilontics, we will assume that $M$ and $N$ have the same area. \\

Remark that short simple closed geodesics (on either surface) do not approach the tips.
Indeed, every simple loop passing through a small enough tip can be deformed into a shorter loop.
Therefore, the geodesic behavior that we are concerned with is identical on both surfaces.
Furthermore, we can assume that the tips are at distance at least~$\ell$ from any simple closed geodesic and from any other tips (on either surface). \\
\forgotten

We will prove a more general result.
Namely, Theorem~\ref{thm:sumgenusg} holds true for compact Riemannian surfaces~$M$ of signature $(g;p,q)$ (\ie, of genus $g$ with $p$ marked points $x_1,\ldots,x_p$ and $q$ boundary components) with boundary components of length at most~$\ell$, where $\ell:=1$.
In this case, $n=p+q$ represents the total number of marked points and boundary components of~$M$. \\

\noindent It is enough to show this result when the marked homotopical systole of $M$ is greater or equal to~$\ell$, \cf~Definition~\ref{def:ms}, otherwise we split the surface along a simple loop of $M \setminus \{ x_1,\ldots,x_p \}$ of length less than~$\ell$ nonhomotopic to a point, a boundary component or a small circle around a marked point.
Then we deal with the resulting surfaces.
Indeed, by splitting the surface~$M$ of signature $(g;p,q)$, we obtain one of the following:
\begin{enumerate}
\item a surface of signature $(g-1;p,q+2)$;
\item two surfaces of signature $(g_i;p_i,q_i)$ with $0<g_i < g$ and $p_i + q_i \leq p + q +2$ for $i=1,2$;
\item two surfaces of signature $(g_i;p_i,q_i)$ with $g_1=0$, $p_1 + q_1 \leq p + q +1$, $g_2=g$ and $p_2 + q_2 < p + q$;
\item or, in case $g=0$, two surfaces of signature $(0;p_i,q_i)$ with $p_i + q_i < p + q$ for $i=1,2$.
\end{enumerate}
In all cases, we can conclude by induction on both $g$ and $n=p+q$. \\

\noindent First, we attach a cap (\ie, a round hemisphere) along each boundary component $c_1,\ldots,c_q$ of~$M$ to obtain a closed Riemannian surface $N$ with $n=p+q$ marked points corresponding to the $p$ marked points of~$M$ and the $q$ summits of the caps of~$N$.
As with~$M$, the marked homotopical systole of $N$ is greater or equal to $\ell$.
By construction, we have
\begin{eqnarray*}
\area \, N & \leq & \area \, M + \sum_{i=1}^q \frac{1}{2\pi} \length(c_i)^2 \\
& \leq & 4 \pi \left( g+\frac{n}{2}-1 \right) + \frac{q}{2 \pi} \, \ell^2
\end{eqnarray*}
where $n=p+q$. \\

\noindent Now, we consider $g$ homologically independent disjoint loops $\gamma_1,\hdots,\gamma_g$ in a pants decomposition of~$N$ where the marked points are ignored with
$$
\length \,\gamma_{k} \leq A_g \, \sqrt{n},
$$
for every $k=1,\ldots,g$, where $A_g$ is some constant depending only on~$g$, \cf~Proposition \ref{prop:bers} and Lemma~\ref{lem:bershomology}.
Sliding these curves through a length-nonincreasing deformation if necessary, we can assume that they do not cross the caps of~$N$.
We can also assume that they do not pass through the marked points of~$N$ by slightly perturbing them and changing the value of~$A_g$. \\

\noindent We cut $N$ along $\gamma_1,\hdots,\gamma_g$ and attach caps along the $2g$ boundary components thus obtained.
As a result, we obtain a Riemannian sphere~$S$ with $2g+n$ marked points corresponding to the $n$ marked points of $N$ and the $2g$ summits of the caps we glued on~$N$.
By construction, the marked homotopical systole of~$S$ is greater or equal to~$\ell$ and 
$$
\area \, S \leq \area \, N + 2g \, \frac{A_{g}^{2} \, n}{2 \pi} \leq B_{g} \, n
$$
for some constant~$B_{g}$. \\

\noindent By applying Proposition~\ref{prop:S} to~$S$, we obtain a pants decomposition~$\mathcal{P}_{S}$ of~$S$ with respect to its marked points whose total length does not exceed
$$
\frac{C}{\ell} \, B_{g} \, n \, \log(2g+n).
$$

\noindent As previously, we can push a curve away from the caps of $S$ without increasing its length.
Therefore, we can assume that the pants decomposition loops of~$\mathcal{P}_{S}$ stay away from the caps.
This shows that the loops of~$\mathcal{P}_{S}$ also lie in~$M$, and form with $\gamma_1,\hdots,\gamma_g$ and the connected components of $\partial M$ a pants decomposition of~$M$.
By construction, the total length of this pants decomposition of~$M$ is bounded from above by
$$
\frac{C}{\ell} \, B_{g} \, n \, \log(2g+n) + g \, A_g \, \sqrt{n} + q \, \ell \leq C_{g} \, n \, \log (n+1)
$$
for some constant~$C_{g}$.
\end{proof}

We conclude this section with a corollary of the above result for hyperelliptic Riemannian surfaces, i.e., surfaces with an orientation-preserving isometric involution where the quotient surface is a sphere.

\begin{theorem} Every closed hyperelliptic Riemannian surface of genus $g$ and area $4\pi (g - 1)$ admits a pants decomposition whose sum of the lengths is bounded above by 
$$
C \, g \log (g) 
$$
for a universal constant $C$. 
\end{theorem}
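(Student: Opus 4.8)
The plan is to push the problem down to the quotient sphere, invoke Theorem~\ref{thm:sumgenusg} there, and pull the resulting pants decomposition back to $M$.

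First I would record the geometry of the quotient. Let $\sigma$ be the hyperelliptic involution and $\pi:M\to\bar S:=M/\sigma$ the quotient map; by Riemann--Hurwitz $\sigma$ has exactly $2g+2$ fixed points (the Weierstrass points), so $\pi$ is a branched double cover of the sphere branched at the $2g+2$ images $x_1,\dots,x_{2g+2}$, which I mark on $\bar S$. Away from the $x_i$ the map $\pi$ is a local isometry for the quotient metric, so $\area(\bar S)=\tfrac{1}{2}\area(M)=2\pi(g-1)$ while $\bar S$ has a cone point of angle $\pi$ at each $x_i$; replacing a shrinking metric disk around each $x_i$ by a tiny smooth spherical cap (moving the marked point to its tip) makes $\bar S$ a genuine closed Riemannian surface of essentially the same area, with no short loop or short arc meeting the caps. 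I would then apply Theorem~\ref{thm:sumgenusg} in genus $0$ with $n=2g+2$ marked points (the actual area of $\bar S$ lies below the normalization $4\pi(\tfrac{n}{2}-1)$, which only helps), obtaining a pants decomposition $\mathcal P$ of $\bar S$ with respect to the $x_i$ of total length $\sum_{\gamma\in\mathcal P}\length(\gamma)\le C_0(2g+2)\log(2g+3)\le C_1\,g\log g$.

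Next I would pull $\mathcal P$ back. Each cuff of $\mathcal P$ either lifts to a single curve of twice the length or to two curves of the same length, according to the parity of the number of Weierstrass points it encloses (the $\Z/2$-monodromy of the cover along it); in either case $\pi^{-1}(\mathcal P)$ has total length at most $2\sum_{\gamma\in\mathcal P}\length(\gamma)\le 2C_1\,g\log g$. Cutting $M$ along $\pi^{-1}(\mathcal P)$, each piece is the preimage of a pair of pants of $\mathcal P$; a short Euler--characteristic/monodromy count shows each such preimage is a pair of pants, a disjoint union of two pairs of pants, an annulus, or a four-holed sphere, and there are $O(g)$ pieces in all. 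To finish I would turn $\pi^{-1}(\mathcal P)$ into an honest pants decomposition $\mathcal Q$: discard one of the two (isotopic) boundary curves of each annular piece, and insert into each four-holed-sphere piece $X$ one essential curve splitting it into two pants, which by a coarea/sweepout argument can be taken of length $O(\area(X)+\length(\partial X)+1)$. Since $\sum_X\area(X)=\area(M)=O(g)$, since $\sum_X\length(\partial X)\le 2\,\length(\pi^{-1}(\mathcal P))=O(g\log g)$, and since there are $O(g)$ pieces, the removed curves change nothing and the added curves total $O(g\log g)$, so $\sum_{\gamma\in\mathcal Q}\length(\gamma)\le C\,g\log g$.

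The hard part is this last step: checking that the (possibly disconnected) preimage of a pants decomposition of the marked sphere can be corrected to a genuine pants decomposition of $M$ by removing finitely many redundant curves and adjoining finitely many short ones, while keeping track of which cuffs enclose an odd versus an even number of Weierstrass points. One clean way to avoid four-holed-sphere pieces altogether is to run the construction of Proposition~\ref{prop:S} so that at every stage the marked points along the path $\Gamma$ are split into two blocks of even size (possible since $2g+2$ is even, with nesting depth still $O(\log g)$): then every cuff of $\mathcal P$ encloses an even number of Weierstrass points and lifts to two curves, only annular pieces occur upstairs, and $\mathcal Q$ is obtained simply by deleting one boundary curve from each annular piece.
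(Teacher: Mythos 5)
Your proposal is correct and follows essentially the same route as the paper: quotient by the hyperelliptic involution, apply Theorem~\ref{thm:sumgenusg} to the sphere with $2g+2$ marked points, lift the decomposition (at most doubling the total length), and complete it by inserting one short curve in each four-holed-sphere piece --- the paper produces exactly such a curve, of length $O\bigl(\sqrt{\area(X)}+\length(\partial X)\bigr)$, by capping the four holes with hemispheres and invoking Proposition~\ref{prop:bers}, and then sums as you do. Your parity-controlled variant of Proposition~\ref{prop:S} (splitting into even blocks so that only pants and annular pieces occur upstairs) is a pleasant optional simplification not present in the paper, but it is not needed.
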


\begin{proof}
We begin by taking the quotient of the hyperelliptic surface $M$ by its hyperelliptic involution $\sigma$ to obtain a sphere $S$ with $2g+2$ cone points of angle $\pi$. We denote these points $x_1,\hdots,x_{2g+2}$. \\

\noindent Using Theorem \ref{thm:sumgenusg}, there exists a pants decomposition of $S$ with marked points $x_1,\hdots,x_{2g+2}$ of total length which does not exceed $C_0 (2g+2) \log(2g+2)$ for some universal constant $C_0$. We proceed to lift this pants decomposition via $\sigma$ to the surface $M$, and we obtain a multicurve $\mu \subset M$ of total length which does not exceed $2 C_0 (2g+2) \log(2g+2)$. This multicurve is not a pants decomposition, but it is not too difficult to see that by cutting along it, one obtains a collection of cylinders, pairs of pants or four-holed spheres. This is explicitly shown in \cite[Lemma 6]{BP09}. \\

\noindent To complete the multicurve into a full pants decomposition, we must add curves which lie in the four-holed spheres. Consider the set of four-holed spheres $\{F_k\}_{k=1}^{n_0}$ which arise as connected components of $M\setminus \mu$. Note that there are at most $g-1$ of them. For each four-holed sphere $F_k$, we consider an interior curve $\gamma_k$ that cuts it into two pairs of pants. We claim that these curves can be chosen such that 
the sum of their lengths is bounded above by $C_2 \, g \log(g)$ for a universal constant $C_2$. \\

\noindent To show this, consider for each $k$, the lengths $\{\ell_{k,i}\}_{i=1}^4$ of the four boundary curves of $F_k$.
To each~$F_k$, we glue four round hemispheres of boundary lengths $\{\ell_{k,i}\}_{i=1}^4$ and we mark the four summits of the hemispheres to obtain a marked sphere $\tilde{F}_k$.
By Proposition~\ref{prop:bers}, the four-holed sphere~$\tilde{F}_k$ admits a pants decomposition (which here is reduced to a single curve) $\gamma_k$ of length at most $C_1 \sqrt{\area(\tilde{F}_k)}$.
Sliding these curves away from the marked points of the hemispheres without increasing their lengths, we can assume that each curve~$\gamma_k$ lies in~$F_k$.

If we denote $A_k=\area(F_k)$, we have that
\begin{eqnarray*}
\sum_{k=1}^{n_0} \length(\gamma_k) &\leq& C_1 \sum_{k=1}^{n_0} \sqrt{\area(\tilde{F}_k)}\\
&\leq& C_1 \sum_{k=1}^{n_0} \sqrt{A_k + \sum_{i=1}^4 \frac{\ell_{k,i}^2}{2\pi} }\\
&\leq& C_1 \sum_{k=1}^{n_0} \left(\sqrt{A_k}+ \sqrt{\sum_{i=1}^4 \frac{\ell_{k,i}^2}{2\pi} }\right)
\end{eqnarray*}
Now, unless a radicand is of value less than $1$, it bounds its square root.
Hence $\sqrt{A_k} \leq 1 + A_k$.
Denoting $L_k = \max_{i} \ell_{k,i}$, we obtain
\begin{eqnarray*}
\sum_{k=1}^{n_0} \length(\gamma_k) &\leq& C_1 \left( n_0 + \sum_{k=1}^{n_0} \left(A_k + \sqrt{ \frac{2}{\pi} L^2_k} \right) \right)\\
& \leq & C_1 \, n_0 + C_1 \sum_{k=1}^{n_0} A_k + C_1 \sum_{k=1}^{n_0} \sqrt{\frac{2}{\pi}} L_k.
\end{eqnarray*}
Note that $\sum_{k=1}^{n_0} L_k \leq 4 \sum_{k,i} \ell_{k,i} \leq 16 C_0 \, (2g+2) \log(2g+2)$ because each curve of $\mu$ can be a boundary of at most two four-holed spheres. 
Now, since $\sum_{k=1}^{n_0} A_k \leq \area(M) = 4\pi (g-1)$ and $n_0 \leq g-1$, we conclude that the sum of the lengths of the $\gamma_k$ is bounded above by $C_2 \, g \log(g)$.
Hence the claim. \\

\noindent In conclusion, the multicurve $\mu \cup \{\gamma_k\}_{k=1}^{n_0}$ contains a pants decomposition of total length not exceeding
$$2 C_0 \, (2g+2) \log(2g+2) + C_2 \, g \log(g) < C \, g \log(g)$$
for some universal constant $C$.
\end{proof}


\section{Systolic area and first Betti number of groups}

In this section, we use the approach developed in Section~\ref{sec:ell} to evaluate
the systolic area of a finitely presentable group $G$ in terms of its first Betti
number. \\

\begin{definition} \label{def:SG}
The \emph{systolic area} of~$G$ is defined as 
\begin{equation*}
\Sys(G)=\inf_X \frac{\area(X)}{\sys_\pi(X)^2},
\end{equation*}
where the infimum is taken over all piecewise flat $2$-complexes $X$ with
fundamental group isomorphic to $G$ and $\sys_\pi$ denotes the homotopical systole, \cf~Definition~\ref{def:sys}.
One can also take the infimum over piecewise Riemannian $2$-complexes since Riemannian metrics can be approximated as close as wanted by piecewise flat metrics.

Recall also that the first Betti number of~$G$ is defined as the dimension of its first real homology group
$$
H_1(G,\R):=H_1(K(G,1),\R),
$$
where $K(G,1)$ denotes the Eilenberg-MacLane space associated to $G$.
\end{definition}

\begin{theorem}
Let $G$ be a finitely presentable nontrivial group with no free factor isomorphic to~$\Z$.
Then
$$
\Sys(G)\geq C \, \frac{b_1(G)+1}{(\log(b_1(G)+2))^2}
$$
for some positive universal constant $C$.
\end{theorem}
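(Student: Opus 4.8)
The plan is to run the graph-embedding argument from the proof of Theorem~\ref{theo:ell} on a piecewise flat $2$-complex $X$ with $\pi_1(X)\cong G$, the genuinely new point being that the radius of the disks used to build the nerve is kept as a free parameter and optimized at the very end. Throughout, set $A=\area(X)$, $s=\sys_\pi(X)$ and $b=b_1(G)=b_1(X)$; we must show $A/s^{2}$ is bounded below by a constant times $(b+1)/(\log(b+2))^{2}$. First I would reduce to a regular complex. If $b=0$ there is nothing new: the right-hand side is a universal constant and $\Sys(G)$ is bounded away from zero since $G$ is nontrivial and, having no free factor isomorphic to~$\Z$, not free, \cf~\cite[6.7.A]{gro83},~\cite{KRS06}. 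So assume $b\geq 1$. Using the structural reductions for groups with no free factor isomorphic to~$\Z$ from~\cite{RS08} — this is exactly where that hypothesis is used, to prevent a minimizing complex from collapsing onto a graph, on which no lower bound on the area of small balls can hold — together with the $2$-complex analogue of Gromov's regularization Lemma~\ref{lem:regmet} (\cf~\cite[5.6.C'']{gro83}), I may replace $X$ by a complex with the same fundamental group, the same or smaller area, systole no smaller than~$s$, and such that every disk of radius $r_0\leq s/4$ has area at least $r_0^{2}/2$. Since this only decreases the ratio $A/s^{2}$, it suffices to treat such an~$X$.

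Now fix a radius $r_0$ with $0<r_0\leq s/16$, to be chosen later, and carry out the construction of the proof of Theorem~\ref{theo:ell}: take a maximal family of disjoint disks $\{D_i\}_{i\in I}$ of radius~$r_0$ in~$X$, so that $|I|\leq 2A/r_0^{2}$ by the area bound, and let $\Gamma$ be the $1$-skeleton of the nerve of the covering of~$X$ by the slightly enlarged disks $2D_i+\varepsilon$, together with the map $\varphi:\Gamma\to X$ sending each edge to a minimizing geodesic of length $<4r_0+2\varepsilon<s/2$ joining the corresponding centers. Endowing each edge with length $4r_0+2\varepsilon$, the map $\varphi$ is distance nonincreasing and, as in Lemma~\ref{lem:epi}, induces an epimorphism $\pi_1(\Gamma)\to\pi_1(X)$, hence an epimorphism in $\mathbb{Q}$-homology. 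Passing, exactly as in Lemma~\ref{lem:isom}, to a subgraph $\Gamma_1$ with the fewest edges among those for which $\varphi$ still induces a homology epimorphism, the induced map $\varphi_*:H_1(\Gamma_1;\mathbb{Q})\to H_1(X;\mathbb{Q})$ becomes an isomorphism; in particular $b_1(\Gamma_1)=b$, the graph~$\Gamma_1$ has at most $b+|I|-1$ edges, and so $\length(\Gamma_1)\leq (4r_0+2\varepsilon)(b+|I|)$.

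A shortest cycle of~$\Gamma_1$ is embedded, hence nontrivial in $H_1(\Gamma_1;\mathbb{Q})$; since $\varphi_*$ is an isomorphism, its image is homologically nontrivial, hence noncontractible, in~$X$, so its length — which is at most $\sys(\Gamma_1)$ — is at least~$s$. Feeding $\length(\Gamma_1)\leq (4r_0+2\varepsilon)(b+2A/r_0^{2})$ into the Bollob\'as–Szemer\'edi–Thomason inequality $\sys(\Gamma_1)\leq 4\,\frac{\log(1+b)}{b}\,\length(\Gamma_1)$ and letting $\varepsilon\to0$ gives
$$
s\ \leq\ \frac{16\log(1+b)}{b}\Big(b\,r_0+\frac{2A}{r_0}\Big)\ =\ 16\log(1+b)\,r_0+\frac{32\log(1+b)}{b}\cdot\frac{A}{r_0}.
$$
The right-hand side is minimized over $r_0>0$ at $r_0^{*}=\sqrt{2A/b}$, where it equals a universal multiple of $\log(1+b)\sqrt{A/b}$. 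If $r_0^{*}\leq s/16$, equivalently $A\leq s^{2}b/512$, this choice is admissible and yields $s^{2}\lesssim \frac{\log^{2}(1+b)}{b}\,A$, that is, $\Sys(G)=A/s^{2}\gtrsim b/\log^{2}(1+b)$; if instead $A>s^{2}b/512$, then $A/s^{2}>b/512$, which for $b\geq 1$ is larger still. Either way $\Sys(G)\geq C\,(b_1(G)+1)/(\log(b_1(G)+2))^{2}$ for a suitable universal~$C$, after adjusting constants and absorbing the $b=0$ case.

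The graph-theoretic core is routine: it is the proof of Theorem~\ref{theo:ell} run with $r_0$ left free. The point that will need real care is the $2$-complex version of the regularization lemma and, inseparably from it, the precise role of the hypothesis \emph{no free factor isomorphic to~$\Z$} — it is what rules out a minimizing complex that is one-dimensional, or partly so, on which the disk-area bound fails and $b_1$ can then be inflated at no cost in area (as $G=H\ast F_k$ shows). I expect this to be the only genuine obstacle, and to be handled by an appeal to~\cite{RS08}; the admissibility constraint $r_0^{*}\leq s/16$ is harmless, since its failure makes the desired inequality trivially true.
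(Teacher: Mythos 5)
Your proposal is correct and is essentially the paper's own argument: regularize via~\cite{RS08} (this is exactly where the absence of a $\Z$ free factor is used to get the lower bound on the area of small disks), form the nerve graph of a maximal packing by $r_0$-disks, pass as in Lemmas~\ref{lem:epi} and~\ref{lem:isom} to a minimal subgraph computing $H_1(X;\R)$, apply the Bollob\'as--Szemer\'edi--Thomason inequality~\cite{BT97,BS02}, and push the short cycle back to~$X$ to bound $\sys_\pi(X)$. The only deviation is bookkeeping: you keep the packing radius $r_0$ free and optimize it at the end (splitting into the two regimes $r_0^*\leq s/16$ or not, and treating $b_1=0$ separately via the unfree bound), whereas the paper normalizes the area to $b_1(G)$ and reduces to $\sys_\pi(X)>1$ before running the same construction at a fixed scale; just note that the regularization results you invoke are stated for a prescribed radius, so formally you should fix $r_0$ (from the area and systole of the original complex, which regularization only improves) before regularizing rather than claiming the disk-area bound for all radii at once.
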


\begin{remark}
Consider the free product $G_n=F_n * G$, where $F_n$ is the free group with $n$
generators and $G$ is a finitely presentable nontrivial group.
The first Betti number of~$G_n$ goes to infinity with~$n$, while its systolic area
remains bounded by the systolic area of~$G$.
This example shows that a restriction on the free factors is needed in the
previous theorem.
\end{remark}

\begin{proof}
Let $X$ be a piecewise flat $2$-complex with $\pi_1(X)=G$.
We can apply the metric regularization process of~\cite[Lemma~4.2]{RS08} to~$X$ and
change the metric of~$X$ for a piecewise flat metric with a better systolic area.
Thus, we can now assume from \cite[Theorem~3.5]{RS08} that the area of every
disk~$D$ of radius~$\frac{1}{8} \sys_\pi(X)$ in~$X$ satisfies
$$
\area(D) \geq \frac{1}{128} \, \sys_\pi(X)^2.
$$
We can also normalize the area of $X$ to be equal to $b_1(G)$.

If the homotopical systole of~$X$ is bounded by $1$, there is nothing to prove.
Thus, we can assume that it is greater than $\ell:=1$.
Now, set $r_0=\frac{1}{8} \sys_\pi(X)$.\\

1) Since each disk of radius~$r_{0}$ has area at least~$r_{0}^{2}/2$, the maximal
system of disjoint \mbox{$r_{0}$-disks} $\{D_{i}\}_{i \in I}$ admits at most $\frac{2 \, b_1(G)}{r_{0}^{2}}$ disks, that is,
$$
|I| \leq \frac{2 \, b_1(G)}{r_{0}^{2}}.
$$

2) As in the proof of Theorem~\ref{theo:ell} (Step~2), consider the $1$-skeleton~$\Gamma$ of
the nerve of the covering of~$M$ by the disks $2D_i+\varepsilon$ with $\varepsilon$
positive small enough.
The graph~$\Gamma$ is endowed with the metric for which each edge has length~$\ell/2$.
The map $\varphi : \Gamma \to X$, which takes each edge with endpoints $v_{i}$
and~$v_{j}$ to a segment connecting $x_{i}$ and~$x_{j}$, induces an epimorphism
$\pi_1(\Gamma) \to \pi_1(X) \simeq G$, \cf~Lemma~\ref{lem:epi} (whose proof works with complexes too). \\

3) Consider a connected subgraph $\Gamma_1$ of $\Gamma$ with a minimal number of
edges such that the restriction 
of $\varphi$ to $\Gamma_1$ still induces an epimorphism in real homology.
By Lemma~\ref{lem:isom} (whose proof works with complexes too), the homomorphism induced in real homology by the restriction of~$\varphi$ to~$\Gamma_1$ is an isomorphim (observe that $H_1(G,\R):=H_1(K(G,1),\R)\simeq H_1(X,\R)$).
Arguing as in the proof of Theorem~\ref{theo:ell}, we can show that the length
of~$\Gamma_1$ is at most $C' \, b_1(G)$ and that 
$$
\sys_\pi(\Gamma_1) \leq C'' \, \log(b_1(G)+1),
$$
where $C'=C'(\ell)$ and $C''=C''(\ell)$ are universal constants (recall that $\ell$ is fixed equal
to~$1$).
Note that a homotopical systolic loop of~$\Gamma_1$ induces a nontrivial class in real homology.
Since $\varphi$ is distance nonincreasing and its restriction to~$\Gamma_1$ induces
an isomorphism in real homology, the same upper bound holds for~$\sys_{\pi}(X)$.
Hence the result.
\end{proof}

The order of the bound in the previous theorem is asymptotically optimal, as shown by the following family of examples.

\begin{example}\label{ex:GG}
{\it Even case.}
Let $g\geq 2$ be an integer and $G_{2g}$ be the fundamental group of a closed orientable surface of genus $g$. It is a finitely presentable group with no free factor isomorphic to~$\Z$ and with first Betti number $2g$. 
The Buser-Sarnak hyperbolic surfaces~\cite{BS94} show that
$$
\Sys(G_{b}) \leq c_0 \, \frac{b}{\log(b)^2},
$$
where $b=2g$, for some positive universal constant~$c_0$. \\

\item{\it Odd case.}
Now let $G_{2g+1}$ be the fundamental group of the connected sum of a closed orientable surface of genus $g$ and a Klein bottle. It is a finitely presentable group with no free factor isomorphic to~$\Z$ and with first Betti number $2g+1$. 

Consider on the one hand a Buser-Sarnak hyperbolic surface $M$ of genus $g$ with homotopical systole greater than~$c \, \log(g)$ for some positive constant~$c$.
Consider on the other hand a flat rectangle $[0,\frac{L}{2}] \times [0,L]$ with $L = \frac{c}{2} \log(g)$, and glue the opposite sides of length~$L$ of this rectangle to obtain a flat Moebius band~$\mathcal{M}$ with boundary length~$L$.

We can find two disjoint minimizing arcs on~$M$ of length~$\frac{L}{2}=\frac{c}{4} \log(g)$.
Now, we cut the surface~$M$ open along these arcs and attach two Moebius bands~$\mathcal{M}$ along the boundary components of the surface.
We obtain a closed nonorientable surface~$M_{2g+1}$ with fundamental group isomorphic to~$G_{2g+1}$.
By construction,
$$
\area(M_{2g+1}) = 4 \pi (g-1) + \frac{c^2}{4} \log(g)^2
$$
and
$$
\sys_{\pi}(M_{2g+1}) = \frac{L}{2} = \frac{c}{4} \log(g).
$$
Thus,
$$
\Sys(G_{b}) \leq c_0 \, \frac{b}{\log(b)^2},
$$
where $b=2g+1$, for some positive universal constant $c_0$.
\end{example}

\bibliographystyle{plain}

\end{document}